\numberwithin{equation}{section}
\newcommand\lL{\mathfrak{\theta}}
\newcommand\GLL{\operatorname{GL}}
\newcommand\agr{\mathcal{G}r}
\newcommand\bull{\sssize{\bullet}}
\newcommand\tr{\tilde{r}}
\newcommand\mvv\Bbb
\newcommand\frg{\mathfrak{g}}
\newcommand\frh{\mathfrak{h}}
\newcommand\op{\operatorname}
\newcommand\mf{\mathcal{F}}
\newcommand\pone{\Bbb{P}^1}
\newcommand\me{\mathcal{E}}
\newcommand{\git}{\ensuremath{\operatorname{/\!\!/}}}
\newcommand\Fl{\operatorname{Fl}}
\newcommand\mv{\mathcal{V}}
\newcommand\tensor{\otimes}
\newcommand\ml{\mathcal{L}}
\newcommand\mg{\frg}
\newcommand\mh{\mathcal{H}}
\newcommand\mw{\mathcal{W}}
\newcommand\mq{\mathcal{Q}}
\newcommand\rk{\operatorname{rk}}
\newcommand\xpp{x}
\newcommand{\leto}[1]{\stackrel{#1}{\to}}
\newcommand\mgg{\mathcal{G}}
\newcommand\SLL{\operatorname{SL}}
\newcommand\mpp{{\operatorname{ S}}}
\newcommand\vr{\Bbb{V}_{\sL_{r+1},\vec{\lambda},\ell}}
\newcommand\vl{\Bbb{V}_{\sL_{\ell+1},\vec{\lambda}^T,r}}
\newcommand\clar{\Bbb{A}_{\sL_{r+1},\vec{\lambda}}}
\newcommand\aclar{{A}_{\sL_{r+1},\vec{\lambda}}}
\newcommand\parbun{\mathcal{P}{ar}}
\newcommand{\ovop}[1]{\overline{\operatorname{#1}}}
\newcommand{\sL}{\mathfrak{sl}}
\newtheorem{theorem}{Theorem}[section]
\newtheorem{remark}[theorem]{ Remark}
\newtheorem{corollary}[theorem]{Corollary}
\newtheorem{question}[theorem]{Question}
\newtheorem{proposition}[theorem]{Proposition}
\newtheorem{lemma}[theorem]{Lemma}
\newtheorem{clm}[theorem]{Claim}
\newtheorem{definition}[theorem]{Definition}
\newtheorem{definition/lemma}[theorem]{Definition/Lemma}
\newtheorem{defi}[theorem]{Definition}
\newtheorem{example}[theorem]{\bf Example}
\begin{document}

\title[Nonvanishing]{Nonvanishing of conformal blocks divisors on $\ovop{M}_{0,n}$}
\author{Prakash Belkale, Angela Gibney and Swarnava Mukhopadhyay}

\maketitle

\begin{abstract} We introduce and study the problem of finding necessary and sufficient conditions under which a conformal blocks divisor on $\ovop{M}_{0,n}$ is nonzero.
We give necessary conditions in type $\op{A}$, which are sufficient when theta and critical levels coincide.  We show that divisors are subject to
additive identities,
dependent on ranks of the underlying bundle.  These identities amplify vanishing and nonvanishing results and have other applications.
\end{abstract}
\section{Introduction}
From the data of a simple Lie algebra $\mg$, a positive integer $\ell$, and an $n$-tuple $\vec{\lambda}$, of dominant integral weights for $\mg$ at level $\ell$, one can use representation theory to construct on
 the moduli spaces $\ovop{M}_{0,n}$,  of stable $n$-pointed rational curves, a vector bundle $\mathbb{V}_{\mg,\vec{\lambda},\ell}$.  The fiber  $\mathbb{V}_{\mg,\vec{\lambda},\ell}|_{x}$ over a point $x \in \ovop{M}_{0,n}$ is isomorphic to a vector space of conformal blocks, and  $\mathbb{V}_{\mg,\vec{\lambda},\ell}$ is referred to as a vector bundle of conformal blocks \cite{TUY} (also see \cite{Fakh,sorger, Tsuchimoto}).

 The vector bundles $\mathbb{V}_{\mg,\vec{\lambda},\ell}$ are globally generated: There is a surjection
$$\mathbb{A}_{\mg, \vec{\lambda}} =\op{A}_{\mg, \vec{\lambda}}  \times  \ \ovop{M}_{0,\op{n}}\twoheadrightarrow \mathbb{V}_{\mg,\vec{\lambda},\ell},  \ \,  \ \mbox{ where }    \op{A}_{\mg, \vec{\lambda}}=(\tensor_{i=1}^n V_{\lambda_i})_{\frg}=\frac{\tensor_{i=1}^n V_{\lambda_i}}{\frg (\tensor_{i=1}^n V_{\lambda_i})}$$
is the vector space of coinvariants, the largest quotient space on which $\mg$ acts trivially.  Here $V_{\lambda}$ is the irreducible finite dimensional representation of $\frg$ with highest weight $\lambda$. This  gives rise to a morphism  $f_{\mathbb{V}}$ from $\ovop{M}_{0,\op{n}}$ to the Grassmannian of $\op{rk}\mathbb{V}_{\mg,\vec{\lambda},\ell}$ quotients of $\op{A}_{\mg, \vec{\lambda}}$:
 \begin{equation}\label{mapG}
 \overline{\operatorname{M}}_{0,\op{n}} \overset{f_{\mathbb{V}}}{\longrightarrow}  \operatorname{Grass}^{quo}(\op{rk}\mathbb{V}_{\mg,\vec{\lambda},\ell},\op{A}_{\mg, \vec{\lambda}}),
 \ \ \ x \mapsto  \ \ ( \op{A}_{\mg, \vec{\lambda}}\twoheadrightarrow \mathbb{V}_{\mg,\vec{\lambda},\ell}|_{x}).
 \end{equation}

 The conformal blocks divisor  $\mathbb{D}_{\mg,\vec{\lambda},\ell}=c_1(\mathbb{V}_{\frg,\vec{\lambda},\ell})$, is then responsible for the composition of  $f_{\mathbb{V}}$ with the Pl\"ucker embedding $p$ of
 the Grassmannian  $\operatorname{Grass}^{quo}(\op{rk}\mathbb{V}_{\mg,\vec{\lambda},\ell},\op{A}_{\mg, \vec{\lambda}})$ into $\Bbb{P}=\mathbb{P}^{N-1}$, where
 $N={{\op{dim} \op{A}_{\mg,\vec{\lambda}}}\choose{\op{rk}\mathbb{V}_{\mg,\vec{\lambda}}}}$.   In particular, when $\op{dim} \op{A}_{\mg, \vec{\lambda}}=\op{rk}\mathbb{V}_{\mg,\vec{\lambda},\ell}$, the morphism $p\circ f_{\mathbb{V}}$ contracts $\ovop{M}_{0,n}$ to a point, and the conformal blocks divisor $\mathbb{D}_{\mg,\vec{\lambda},\ell}$ is zero. We denote $p\circ f_{\mathbb{V}}$ by $\phi_{\Bbb{D}}$.

The Verlinde formula gives the rank of
 $\vr$  \cite[Thm 4.2.2]{sorger}, and the dimension of the vector space of coinvariants can be computed, allowing one to compare these values.  Using other approaches in type A, in \cite{BGMA}, we show when  $\ell$ surpasses either the {\em{critical}} or {\em{theta levels}} associated to a pair $(\sL_{r+1},\vec{\lambda})$, then the conformal blocks divisor $\mathbb{D}_{\mg,\vec{\lambda},\ell}$ vanishes.

 However, even for low $n$, and for $\mg=\sL_{r+1}$, for $r$ very small, there are many examples (first found by Fakhruddin,  using \cite{ConfBlocks}) where $0<\op{rk}\mathbb{V}_{\mg,\vec{\lambda},\ell}<\op{rk}\mathbb{A}_{\mg, \vec{\lambda}}$, while  the divisor $\mathbb{D}_{\mg,\vec{\lambda},\ell}$  is zero.  For example, the rank of $\mathbb{V}_{\sL_4, \{\omega_1, (2\omega_1+\omega_3)^3 \}, 3}$ on $\ovop{M}_{0,4}$ is one, while the dimension of the vector space of coinvariants
$\mathbb{A}_{\sL_4, \{\omega_1, (2\omega_1+\omega_3)^3 \}}$ is $2$.  A calculation shows that $\mathbb{D}_{\sL_4, \{\omega_1, (2\omega_1+\omega_3)^3 \}, 3}=0$.    In this case, as we explain in Section \ref{Intro}, one can decompose the divisor (in this case, a point) as the following sum
 $$\mathbb{D}_{\sL_4, \{\omega_1, 2\omega_1+\omega_3, 2\omega_1+\omega_3, 2\omega_1+\omega_3 \}, 3}=\mathbb{D}_{\sL_4, \{\omega_1, \ldots, \omega_1 \}, 1} + \mathbb{D}_{\sL_4, \{0, \omega_1+\omega_3,  \omega_1+\omega_3,  \omega_1+\omega_3 \}, 2}.$$
Both of the divisors on the right hand side turn out to be trivial for simple reasons.

\medskip
This has led us to ask the following:

\begin{question}\label{Question}
What are necessary and sufficient conditions for  a triple $(\mg,\vec{\lambda},\ell)$ that guarantee that the associated conformal blocks divisor  $\mathbb{D}_{\mg,\vec{\lambda},\ell}$ is nonzero?
\end{question}
Determining when $\op{rk}\mathbb{V}_{\mg,\vec{\lambda},\ell}$ (resp. $\op{rk}\mathbb{A}_{\mg, \vec{\lambda}})$
is nonzero is a classical problem in representation theory \cite{fulton,b4}. Frequently, there is an inductive structure, indicating that
nonzeroness is controlled by smaller dimensional Lie algebras (in type A). One could ask if there are similar inductive non-vanishing results for the  first (and higher) Chern classes of $\mathbb{V}_{\mg,\vec{\lambda},\ell}$; Question \ref{Question} belongs to this line of inquiry (see Lemma \ref{chernvanish}). We also note that this question does not seem to be amenable
to exact numerical formulas \cite{Fakh} for the Chern classes (and ranks cf. \cite{sorger}).

As described earlier, conformal blocks divisors are base point free and so give rise to morphisms from $\ovop{M}_{0,n}$ to other projective varieties.  In order to study the maps they define, one is interested in which curves the divisors contract.  The question of vanishing is at its heart a practical matter, first to establish whether the map associated to a given divisor is trivial, and then if not trivial, to understand its image.
Recent groundbreaking work \cite{MDSBS, GoKa} has shown that $\ovop{M}_{0,n}$ is not a Mori Dream Space for $n\ge 13$, and begs one to rethink the long held hope that  there may be only a finite number of extremal rays of $\op{Nef}(\ovop{M}_{0,n})$.  Question \ref{Question} relates to this problem, since there are potentially an infinite number of distinct nonzero conformal blocks divisors that span extremal rays of the nef cone (also see Remark \ref{remarkF}).

\medskip
{\em{Overview:}}
\begin{enumerate}

\medskip

\item We simplify Question \ref{Question}  for divisors of any type by  decomposition of weights and level.  In particular, we show that if certain rank conditions are satisfied, conformal blocks divisor classes are subject to additive relations (Proposition \ref{additive}). Combining this with a quantum generalization of Fulton's conjecture, we prove a scaling identity for divisors associated to rank one bundles (Corollary \ref{LevelOneMaps}), amplifying our nonvanishing results. In other applications, we exhibit non-trivial conformal blocks divisors on $\ovop{M}_{0,n}$ for all $n \ge 5$, with non-zero weights, that do not give rise to a birational morphism (Section \ref{odd}), and we show how mysterious vanishing of a particular divisor can be explained by mundane facts about its constituents (as in the examples in Section \ref{Intro}).
\smallskip

\item It was shown in \cite{BGMA}  that in type A, divisors $\mathbb{D}_{\sL_{r+1}, \vec{\lambda},\ell}$ vanish if $\ell$ is above the so-called critical or theta levels (described in Defs \ref{CritLevDefi} and \ref{FSVCriticalLevel}).   The critical level is defined only for type A, while the theta level is defined for divisors in all types.   Here we prove that in all types, divisors vanish if $\ell$ is above the theta level  (Lemma \ref{theta_level_vanishing}).
\smallskip

\item  We simplify Question \ref{Question} for divisors in type $\op{A}$ by  decomposition of the Lie algebra.
 In particular, the action of the $\sL_2$ corresponding to the highest root gives  a relationship between $\mathbb{V}_{\sL_{r+1},\vec{\lambda},\ell}$  with a tensor product of conformal block bundles for
Lie algebras of smaller rank.  In Theorems \ref{mon1} and \ref{mon2}, we give sufficient nonvanishing conditions for $\mathbb{D}_{\sL_{r+1},\vec{\lambda},\ell}$,  which we show are also necessary when the critical and theta levels coincide and are equal to $\ell$.  We do this by first proving a stronger statement for $\sL_2$, where we answer the question completely (Corollary \ref{metric}). We then use
the $\sL_2$ statement and an argument using parabolic bundles, to obtain results for $\sL_{r+1}$.

\smallskip
\item We apply our methods to show nonvanishing of two infinite families of critical level divisors.  For the family $\mathbb{D}_{\sL_{r+1},\omega_1^n,\ell}$, $n=(r+1)(\ell+1)$, we characterize all  boundary curves that are contracted,  showing that if the $S_n$-invariant F-Conjecture holds, then these divisors give the reduction morphisms to certain moduli spaces $\ovop{M}_{0,\mathcal{A}}$ defined by Hassett (Section \ref{HassettFamily}).  For a related family, we show that while the associated morphisms factor through maps to Hassett spaces, they don't necessarily give them (Section \ref{Kostka}). \end{enumerate}

\medskip
{\em{To describe our results more precisely, we set a small amount of notation.}}

\subsubsection{Notation}

For a finite dimensional simple Lie algebra $\frg$, and a positive integer  $\ell$, called the level, let ${P}_{\ell}(\frg)$ denote the set of dominant integral weights $\lambda$ with $(\lambda,\theta)\leq \ell$.  Here $\theta$ is the highest root, and $(\ ,\ )$ is the Killing form, normalized so that $(\theta,\theta)=2$.  To a triple $(\frg,\vec{\lambda},\ell)$, such that $\vec{\lambda} \in P_{\ell}(\frg)^n$, there  corresponds a vector bundle $\mathbb{V}_{\frg,\vec{\lambda},\ell}$ of conformal blocks  on the stack $\overline{\mathcal{M}}_{g,n}$  \cite{TUY, sorger}.

Finite dimensional irreducible polynomial representations for $\operatorname{GL}_{r+1}$ are parameterized by Young diagrams $\lambda=(\lambda^{(1)}\geq \lambda^{(2)}\geq\dots\geq\lambda^{(r)}\geq \lambda^{(r+1)}\geq 0)$.  We note that two Young diagrams $\lambda$ and $\mu$ give the same representation of $\operatorname{SL}_{r+1}$ (equivalently $\sL_{r+1}$) if $\lambda^{(a)}-\mu^{(a)}$ is a constant independent of $a$.
We use the notation $|\lambda|=\sum_{i=1}^r \lambda^{(i)}$, and $\lambda\in P_{\ell}(\sL_{r+1})$ if and only if $\lambda^{(1)}-\lambda^{(r+1)}\leq \ell$.  We will say that $\lambda$ is normalized if $\lambda^{(r+1)}=0$. The normalization of $\lambda$ is the Young diagram
$\lambda-\lambda^{(r+1)}\cdot(1,1,\dots,1)$.

\medskip
\subsection{Additive identities dependent on ranks}

 In Section \ref{tensorr}, we give the following criteria for decomposing a divisor as an effective sum of simpler conformal blocks divisors.
\begin{proposition}\label{Additive} Given $\vec{\mu} \in P_{\ell}(\frg)^n$, and $\vec{\nu}\in P_{m}(\frg)^n$  with $\rk \mathbb{V}_{\frg,\vec{\mu},\ell}=1,$
and $\rk \mathbb{V}_{\frg,\vec{\mu}+\vec{\nu},\ell+m}= \rk \mathbb{V}_{\frg,\vec{\nu},m}=\delta$.  Then  $$\mathbb{D}_{\frg,\vec{\mu}+\vec{\nu},\ell+m}= \delta\cdot\mathbb{D}_{\frg,\vec{\mu},\ell}+\mathbb{D}_{\frg,\vec{\nu},m}.$$
\end{proposition}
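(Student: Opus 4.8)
The plan is to exploit the multiplicative structure of conformal blocks under fusion of levels, combined with the fact that a rank-one conformal blocks bundle on $\ovop{M}_{0,n}$ is essentially rigid. The key input is the sewing/propagation formalism: there is a natural "multiplication" map of bundles on $\ovop{M}_{0,n}$,
\begin{equation*}
\mathbb{V}_{\frg,\vec{\mu},\ell}\otimes \mathbb{V}_{\frg,\vec{\nu},m}\longrightarrow \mathbb{V}_{\frg,\vec{\mu}+\vec{\nu},\ell+m},
\end{equation*}
coming from the inclusion $V_{\mu_i+\nu_i}\hookrightarrow V_{\mu_i}\otimes V_{\nu_i}$ of the Cartan component, compatibly with the coinvariants construction and the level-$\ell+m$ structure. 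First I would verify that this map is defined over all of $\ovop{M}_{0,n}$ (it suffices to check at a generic point and extend, since both sides are bundles), and that it is compatible with the surjections from the bundles $\mathbb{A}_{\frg,\vec{\lambda}}$ of coinvariants — this is where the globally-generated picture from the introduction enters.

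Next, since $\rk \mathbb{V}_{\frg,\vec{\mu},\ell}=1$, the bundle $\mathbb{V}_{\frg,\vec{\mu},\ell}$ is a line bundle $L$ with $c_1(L)=\mathbb{D}_{\frg,\vec{\mu},\ell}$, and tensoring by $L$ gives an isomorphism $\mathbb{V}_{\frg,\vec{\nu},m}\xrightarrow{\ \sim\ } \mathbb{V}_{\frg,\vec{\nu},m}\otimes L$ after twisting; the content of the rank hypothesis $\rk\mathbb{V}_{\frg,\vec{\mu}+\vec{\nu},\ell+m}=\rk\mathbb{V}_{\frg,\vec{\nu},m}=\delta$ is that the multiplication map above, which targets a bundle of the same rank $\delta$ as its source $L\otimes\mathbb{V}_{\frg,\vec{\nu},m}$, is generically an isomorphism. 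The crucial step is then to show it is an isomorphism everywhere, i.e. that it has no degeneracy locus. Here I would use nonnegativity: conformal blocks divisors are base-point free, hence nef, so $\mathbb{D}_{\frg,\vec{\mu}+\vec{\nu},\ell+m}$ is nef, and the degeneracy locus of a map between bundles of equal rank is cut out by the determinant, a section of $\det\big(\mathbb{V}_{\frg,\vec{\mu}+\vec{\nu},\ell+m}\big)\otimes \det\big(L\otimes\mathbb{V}_{\frg,\vec{\nu},m}\big)^{\vee}$, whose class is
\begin{equation*}
\mathbb{D}_{\frg,\vec{\mu}+\vec{\nu},\ell+m}-\big(\delta\cdot\mathbb{D}_{\frg,\vec{\mu},\ell}+\mathbb{D}_{\frg,\vec{\nu},m}\big).
\end{equation*}
If this class were effective and nonzero it would contradict the structure of these divisors; alternatively, one runs the symmetric argument with the "co-multiplication" going the other way (projecting $V_{\mu_i}\otimes V_{\nu_i}$ onto its Cartan component, using that $\mu_i+\nu_i$ still lies in $P_{\ell+m}(\frg)$), obtaining a map in the reverse direction whose composite with the first is multiplication by a nowhere-vanishing function. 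Taking $c_1$ of the resulting isomorphism $\mathbb{V}_{\frg,\vec{\mu}+\vec{\nu},\ell+m}\cong L\otimes\mathbb{V}_{\frg,\vec{\nu},m}$ and using additivity of $c_1$ in a short exact sequence (or just $c_1$ of a tensor product with a line bundle: $c_1(L\otimes E)=\rk(E)\,c_1(L)+c_1(E)$) yields exactly $\mathbb{D}_{\frg,\vec{\mu}+\vec{\nu},\ell+m}=\delta\cdot\mathbb{D}_{\frg,\vec{\mu},\ell}+\mathbb{D}_{\frg,\vec{\nu},m}$.

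The main obstacle I anticipate is establishing that the multiplication map of bundles is an isomorphism on \emph{all} of $\ovop{M}_{0,n}$, not merely generically — equivalently, ruling out a nonempty degeneracy divisor. The rank equalities in the hypothesis only force generic isomorphism; upgrading this requires either a positivity argument (the determinant class must be trivial because it is simultaneously nef and anti-nef, or because it pairs to zero against enough curve classes), or a direct representation-theoretic argument that the Cartan-component multiplication on conformal blocks is fiberwise injective whenever the ranks match, using the explicit description of fibers via the sewing procedure at boundary points and factorization. I would carry this out by reducing, via factorization of conformal blocks along the boundary, to the same statement on $\ovop{M}_{0,n'}$ with $n'<n$ and inducting on $n$, with the base case $n=3$ being a finite-dimensional representation-theory check.
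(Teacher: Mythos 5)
Your overall framework is the right one (a natural map between $\mathbb{V}_{\frg,\vec{\mu}+\vec{\nu},\ell+m}$ and $\mathbb{V}_{\frg,\vec{\mu},\ell}\otimes\mathbb{V}_{\frg,\vec{\nu},m}$, the rank count, and $c_1(L\otimes E)=\rk(E)\,c_1(L)+c_1(E)$ at the end), but the central step is not established. First, a bookkeeping point: the inclusion $V_{\mu_i+\nu_i}\hookrightarrow V_{\mu_i}\otimes V_{\nu_i}$ induces a map of coinvariants, and hence of conformal blocks, in the direction $\mathbb{V}_{\frg,\vec{\mu}+\vec{\nu},\ell+m}\to\mathbb{V}_{\frg,\vec{\mu},\ell}\otimes\mathbb{V}_{\frg,\vec{\nu},m}$ (the ``multiplication'' lives on the dual spaces of invariant sections); the arrow you wrote does not exist canonically, and your proposed ``co-multiplication'' splitting $V_{\mu}\otimes V_{\nu}\to V_{\mu+\nu}$ is not compatible with the affine-level structure, so the claim that the composite is a nowhere-vanishing scalar is exactly the statement to be proved, not an input. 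More seriously, equality of ranks does \emph{not} make the map ``generically an isomorphism'': a map between bundles of equal rank can be degenerate everywhere, so even the generic statement needs an argument. And your positivity route does not close the gap either: once generic injectivity is known, the determinant gives an effective class representing $\mathbb{D}_{\frg,\vec{\mu}+\vec{\nu},\ell+m}-\bigl(\delta\cdot\mathbb{D}_{\frg,\vec{\mu},\ell}+\mathbb{D}_{\frg,\vec{\nu},m}\bigr)$, but an effective difference of nef classes need not vanish, and ``it would contradict the structure of these divisors'' is not an argument. The factorization induction is a different, unverified strategy and is not carried out.

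The missing idea, which is the actual content of the paper's proof, is a pointwise injectivity statement on duals. Over any closed point $y$, the rank-one hypothesis forces every element of $\mathbb{V}_{\frg,\vec{\mu},\ell}|_y^*\otimes\mathbb{V}_{\frg,\vec{\nu},m}|_y^*$ to be a pure tensor $u\otimes v$, and one reduces to the classical fact that for nonzero $\alpha\in A_{\frg,\vec{\mu}}^*$ and $\beta\in A_{\frg,\vec{\nu}}^*$ the image of $\alpha\otimes\beta$ in $A_{\frg,\vec{\mu}+\vec{\nu}}^*$ is nonzero. This is seen by identifying $A_{\frg,\vec{\lambda}}^*$ with $H^0\bigl((G/B)^n,\ml_{\vec{\lambda}}\bigr)^G$, where the image of $\alpha\otimes\beta$ is the product of two nonzero sections of line bundles on an irreducible variety, hence nonzero. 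This gives injectivity of $\phi^*$ on \emph{every} fiber; the rank equality $\rk\mathbb{V}_{\frg,\vec{\mu}+\vec{\nu},\ell+m}=\rk\mathbb{V}_{\frg,\vec{\nu},m}=\delta$ then upgrades it to an isomorphism of bundles, after which your final Chern-class computation is correct. Without some version of this fiberwise nonvanishing, the proof does not go through.
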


\subsubsection{Applications}  Using Proposition \ref{Additive} in conjunction with the quantum generalization of a conjecture of Fulton in invariant theory \cite{fultonconjecture} and \cite[Remark 8.5]{qdeform}, we show  in Corollary  \ref{fulton} that if $\rk\vr=1$, then
$$\Bbb{D}_{\sL_{r+1},N\vec{\lambda},N\ell}=N\cdot \Bbb{D}_{\sL_{r+1},\vec{\lambda},\ell},  \ \ \forall N \in \mathbb{N}.$$
  As an application,  in Corollary \ref{LevelOneMaps}, we identify images of the maps $\phi_{\mathbb{D}}$  for $\mathbb{D}=\mathbb{D}_{\sL_{r+1}, \ell \vec{\lambda},\ell}=\ell\cdot \mathbb{D}_{\sL_{r+1}, \vec{\lambda},1}$,
as the generalized Veronese quotients of \cite{Giansiracusa,GJM}.

Proposition \ref{Additive} can be used to show that a divisor is nontrivial, by writing it as an effective sum of simpler divisors, and then showing one of the
summands is nontrivial.  In  Section ~\ref{odd}, we use Proposition \ref{Additive} to give non-trivial conformal blocks divisors, with non-zero weights, that do not give  birational morphisms. Such examples were not known before.
 One may also approach questions of mysterious vanishing in this way, seeing for example a divisor as a sum of divisors whose vanishing can be explained by other means (cf. Section \ref{Intro}).

\smallskip

\subsection{Vanishing above the theta level in all types}  In Lemma \ref{theta_level_vanishing}, we show that  $\mathbb{D}_{\mg, \vec{\lambda},\ell}$ is zero if $\ell$ is above the theta level.  It was shown in \cite{BGMA}  that in type A, divisors $\mathbb{D}_{\sL_{r+1}, \vec{\lambda},\ell}$ vanish if  $\ell$ surpasses the critical or theta levels. The critical level (Def \ref{CritLevDefi}),
comes from the relationship with conformal blocks in type A to quantum cohomology, and is valid only for divisors type A.  The theta level (Def \ref{FSVCriticalLevel}), comes from the interpretation of a vector space of conformal blocks as an explicit quotient \cite{Beauville,FSV2}, and holds in all types.

\begin{defi}\label{CritLevDefi} \cite{BGMA} Let $\vec{\lambda}=(\lambda_1,\dots,\lambda_n)$ be an $n$-tuple of  {\em normalized} integral weights for $\sL_{r+1}$, assume that $r+1$ divides $\sum_{i=1}^n|\lambda_i|$, and define the critical level for the pair  $(\sL_{r+1},\vec{\lambda})$ to be
$$ c(\sL_{r+1},\vec{\lambda})=-1+\frac{1}{r+1}\sum_{i=1}^n|\lambda_i|.$$
One can define $c(\sL_{r+1},\vec{\lambda})$ in general, by replacing each $\lambda_i$ by its normalization.   One refers to $\mathbb{V}_{\sL_{r+1},\vec{\lambda},\ell}$  as a critical level bundle
 if $\ell= c(\sL_{r+1},\vec{\lambda})$ and $\vec{\lambda} \in P_{\ell}(\sL_{r+1})^n$.
\end{defi}

  \begin{defi}\label{FSVCriticalLevel} \cite{BGMA} Given a pair $(\frg, \vec{\lambda})$, one refers to
$$\lL(\frg,\vec{\lambda})= -1+ \frac{1}{2} \sum_{i=1}^n\lambda_i (H_{\theta})\in \frac{1}{2}\Bbb{Z}$$
as the theta level.  Here $H_{\theta}$ is the co-root corresponding to the highest root $\theta$.
\end{defi}

  \subsection{Necessary and sufficient conditions for non-vanishing of divisors in type $\op{A}$}
In Theorem \ref{mon1} we answer Question \ref{Question}  in case the critical and theta levels coincide and are equal to
$\ell$.   To state the result, we describe how to associate two
{\em{auxiliary bundles}} to $\mathbb{V}_{\sL_{r+1}, \vec{\lambda},\ell}$.

\begin{definition}(Auxiliary bundles) Given
$\vr$,  one forms the  bundles
$\mathbb{V}_{\sL_2,\vec{\mu},\ell}$, and $\mathbb{V}_{\sL_{r-1},\vec{\nu},\ell}$, where for $i \in [n]=\{1,\ldots,n\}$,
 $\mu_i$ correspond to the $2\times \ell$ Young diagrams formed by the first and last rows of $\lambda_i$, and $\nu_i$
 are given by the $(r-1)\times\ell$ diagram obtained by removing the first and last rows of $\lambda_i$. Note that $\nu_i$ may not be normalized, and can be zero.
\end{definition}

\begin{theorem}\label{mon1}Let $\vec{\lambda}  \in {P}_{\ell}(\sL_{r+1})^n$, for $n\ge 4$, and suppose that $\mathbb{V}_{\sL_2,\vec{\mu},\ell}$, and $\mathbb{V}_{\sL_{r-1},\vec{\nu},\ell}$  are the
auxiliary bundles constructed above.  If
\begin{enumerate}
\item $\ell=c(\sL_{r+1},\vec{\lambda})=\theta(\sL_{r+1},\vec{\lambda})$;
\item $\lambda_i \ne 0$ for all $i\in[n]=\{1,\ldots,n\}$, normalized; and
\item $\operatorname{rk} \mathbb{V}_{\sL_2,\vec{\mu},\ell} > 0$,
 \end{enumerate}
then $\mathbb{D}_{\sL_{r+1},\vec{\lambda},\ell} \ne 0  \iff \mbox{ for } r\ge 3, \operatorname{rk} \mathbb{V}_{\sL_{r-1},\vec{\nu},\ell} > 0.$
\end{theorem}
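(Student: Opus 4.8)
The plan is to pass from $\mathbb{V}_{\sL_{r+1},\vec\lambda,\ell}$ to the auxiliary bundles via the $\sL_2$-triple attached to the highest root $\theta$, as promised in the Overview. The highest-root $\su(2)\hookrightarrow \frg$ (for $\frg=\sL_{r+1}$) and the complementary subalgebra of type $\sL_{r-1}$ together span, up to the relevant Cartan directions, the reductive subalgebra $\mathfrak{sl}_2\oplus\mathfrak{sl}_{r-1}\oplus\mathbb{C}$ fixed by the grading $H_\theta$. Restricting each $V_{\lambda_i}$ to this subalgebra and keeping the top graded piece (the $H_\theta$-extremal subspace) produces, representation-theoretically, the tensor product $V_{\mu_i}\boxtimes V_{\nu_i}$ appearing in the auxiliary bundles; this is exactly the mechanism that makes $c=\theta$ the right hypothesis, since equality of critical and theta levels forces the relevant conformal-block quotient to be computed purely on these extremal subspaces. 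So the first step is to record, using \cite{Beauville,FSV2} and the theta-level interpretation (Def.~\ref{FSVCriticalLevel}), that under hypothesis (1) the fibre of $\mathbb{V}_{\sL_{r+1},\vec\lambda,\ell}$ is identified with the fibre of (a twist of) $\mathbb{V}_{\sL_2,\vec\mu,\ell}\otimes\mathbb{V}_{\sL_{r-1},\vec\nu,\ell}$, and that this identification is natural in the curve, hence an isomorphism of bundles on $\ovop{M}_{0,n}$ (or at least an equality of first Chern classes). Combined with hypothesis (2), that all $\lambda_i$ are nonzero and normalized, the $\mu_i$ are nonzero, so $\mathbb{V}_{\sL_2,\vec\mu,\ell}$ is one of the genuinely interesting (non-trivially weighted) $\sL_2$ bundles.

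The second step is to invoke the complete answer for $\sL_2$, namely Corollary~\ref{metric}, together with the parabolic-bundle argument alluded to in item (3) of the Overview. Under hypotheses (1)--(3), $\mathbb{D}_{\sL_2,\vec\mu,\ell}\neq 0$: indeed the $\sL_2$ critical-level case with all weights nonzero and positive rank is precisely the regime where the $\sL_2$ divisor is established to be nonzero (this is the content of the stronger $\sL_2$ statement). Now I would combine this with the identity from Step 1. Writing $a=\rk\mathbb{V}_{\sL_2,\vec\mu,\ell}$ and $b=\rk\mathbb{V}_{\sL_{r-1},\vec\nu,\ell}$, the product structure gives schematically
\[
\mathbb{D}_{\sL_{r+1},\vec\lambda,\ell}\;=\;b\cdot\mathbb{D}_{\sL_2,\vec\mu,\ell}\;+\;a\cdot\mathbb{D}_{\sL_{r-1},\vec\nu,\ell}
\]
(a first-Chern-class identity of a tensor product, after accounting for the twist, which contributes no divisor class because it is pulled back trivially to $\ovop{M}_{0,n}$). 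If $b=0$ then $\mathbb{V}_{\sL_{r-1},\vec\nu,\ell}=0$, hence $\mathbb{V}_{\sL_{r+1},\vec\lambda,\ell}=0$ and the divisor vanishes, giving the ``$\Leftarrow$ fails'' direction; if $b>0$, then since $\mathbb{D}_{\sL_2,\vec\mu,\ell}\neq 0$ is a nonzero base-point-free (hence nef, in particular effective) divisor, and $\mathbb{D}_{\sL_{r-1},\vec\nu,\ell}$ is likewise nef, the sum $b\cdot\mathbb{D}_{\sL_2,\vec\mu,\ell}+a\cdot\mathbb{D}_{\sL_{r-1},\vec\nu,\ell}$ is a sum of a nonzero nef class and a nef class, hence nonzero. (One should check $b>0$ forces $a>0$, or handle $a=0$ separately; $a=\rk\mathbb{V}_{\sL_2,\vec\mu,\ell}>0$ is exactly hypothesis (3), so this is automatic.) For $r=2$ the $\sL_{r-1}$ factor is absent and the statement reduces to the $\sL_2$ case directly, which is why the biconditional is only asserted for $r\ge 3$.

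The main obstacle is Step 1: making the restriction-to-extremal-subspace correspondence into an honest isomorphism (or Chern-class equality) of \emph{bundles} over all of $\ovop{M}_{0,n}$, not merely a fibrewise dimension count. The subtlety is that the propagation of vacua / factorization compatibility of the two sides must be checked, and that the twisting line bundle (coming from the $\mathbb{C}$-factor, i.e.\ the $H_\theta$-weight bookkeeping) is pulled back from a point and so drops out of $c_1$; this is where one really uses $c(\sL_{r+1},\vec\lambda)=\theta(\sL_{r+1},\vec\lambda)=\ell$, because only at this level is the conformal-block quotient of \cite{FSV2} supported entirely on the top $H_\theta$-graded piece, eliminating lower-order corrections that would otherwise obstruct the clean product formula. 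Once that identification is in hand, the rest is the formal nef-cone argument above plus citation of Corollary~\ref{metric}.
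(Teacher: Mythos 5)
There is a genuine gap, and it sits exactly where you flagged it: Step~1. You need the identification (up to a twist killed in $c_1$) of $\mathbb{V}_{\sL_{r+1},\vec\lambda,\ell}$ with $\mathbb{V}_{\sL_2,\vec\mu,\ell}\tensor\mathbb{V}_{\sL_{r-1},\vec\nu,\ell}$ in order to write $\mathbb{D}_{\sL_{r+1},\vec\lambda,\ell}=b\cdot\mathbb{D}_{\sL_2,\vec\mu,\ell}+a\cdot\mathbb{D}_{\sL_{r-1},\vec\nu,\ell}$ and then run the nef-cone argument. No such identification holds: the ranks of conformal blocks do not multiply under branching to $\sL_2\oplus\sL_{r-1}$, and the theta-level hypothesis does not force it (the quotient presentation of \cite{Beauville,FSV2} concentrates the image of $T_x^{\ell+1}$ on the extremal $H_\theta$-graded piece, but the space of coinvariants $A_{\sL_{r+1},\vec\lambda}$ is not a tensor product of the two smaller coinvariant spaces). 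What is actually true, and is the content of the diagram \eqref{sibel2} and of Section \ref{cherryblossom}, is only that the natural map $\phi:\mathbb{V}_{\sL_2,\vec\mu,\ell}\tensor\mathbb{V}_{\sL_{r-1},\vec\nu,\ell}\to\mathbb{V}_{\sL_{r+1},\vec\lambda,\ell}$ is \emph{generically nonzero}; establishing even this is the hardest step of the paper (degeneracy-locus sections $s_{(\mq,\mgg)}$ on $\parbun_{r+1}$, Lemma \ref{closedeye}, and a transversality argument on parabolic moduli). With only a generically nonzero map, your Chern-class identity has no justification, and the nonvanishing is instead derived by contradiction: if $c_1=0$ then by Lemma \ref{chernvanish} the bundle is trivial, so $\mathbb{V}_{\sL_{r+1},\vec\lambda,\ell}|_x^*$ is a constant nonzero subspace of $A^*$ contained in the image of $\mathbb{V}_{\sL_2,\vec\mu,\ell}|_x^*\tensor\mathbb{A}_{\sL_{r-1},\vec\nu}^*$, contradicting $\bigcap_x\mathbb{V}_{\sL_2,\vec\mu,\ell}|_x^*=0$ from Corollary \ref{metric}(1). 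So your instinct to reduce to the $\sL_2$ statement is right, but the reduction mechanism you propose is not available.

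A second, independent error is your treatment of the converse: from $b=\rk\mathbb{V}_{\sL_{r-1},\vec\nu,\ell}=0$ you conclude $\mathbb{V}_{\sL_{r+1},\vec\lambda,\ell}=0$. This contradicts the basic phenomenon motivating the paper (e.g.\ $\mathbb{V}_{\sL_4,\{\omega_1,(2\omega_1+\omega_3)^3\},3}$ has rank one yet zero first Chern class). The correct statement, proved in Section \ref{Conversemon1}, is weaker and more delicate: when $\mathbb{V}_{\sL_2,\vec\mu,\ell}\tensor\mathbb{V}_{\sL_{r-1},\vec\nu,\ell}=0$, the equality of theta and critical levels forces the image of $T_x^{\ell+1}$ to come only from the extremal isotypic piece $M_1\tensor\cdots\tensor M_n$, and that image equals the image of the constant subspace of $\frg'$-coinvariants; hence $\mathbb{V}_{\sL_{r+1},\vec\lambda,\ell}$ is a constant (typically nonzero) quotient of $A_{\sL_{r+1},\vec\lambda}$ and only its first Chern class vanishes. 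Both directions of the theorem therefore require arguments that your proposal replaces with an unproven (and false) product decomposition.
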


The forward implication of Theorem \ref{mon1} is a special case of Theorem \ref{mon2}, which provides a very general non-vanishing statement.  In particular, Theorem \ref{mon2} does not assume that the theta level equals the critical level, or that the level $\ell$ is critical.

We give applications of Theorem \ref{mon1} and Theorem \ref{mon2}, using them to show families of divisors are nontrivial as we next explain.

 \subsection{Nonvanishing of two families of critical level divisors}
In Section \ref{HassettFamily}, using Theorem \ref{mon2}, and our results from \cite{BGMA}, we  give a complete description of all boundary curves contracted by the nontrivial critical level divisors
$\Bbb{D}=\mathbb{D}_{\sL_{r+1}, \omega_1^{n},\ell}$ on
  $\overline{\operatorname{M}}_{0,n}$,  where $n=(r+1)(\ell+1)$, for all $r>1$ and $\ell>1$.

  In Section \ref{Kostka}, we apply Theorem \ref{mon1} and \cite{BGMA}, to show that  divisors of the form
$\Bbb{D}_{\vec{\alpha}} = \mathbb{D}_{\sL_{r+1}, \{\alpha_i\omega_1\}_{i=1}^{n},\ell}$, for $\sum_{i}\alpha_i=(r+1)(\ell+1)$,  with $n=2(r+1)$ are nonzero.

   These two families of critical level divisors are generalizations of those studied previously for $r=1$ and $\ell=1$ ($\mathbb{D}_{\sL_{2}, \omega_1^{2(\ell+1)},\ell}$ in \cite{Fakh, gjms} and $\mathbb{D}_{\sL_{r+1}, \omega_1^{2(r+1)},1}$ in \cite{Fakh,Giansiracusa}).

   We show that when $r>1$ and $\ell>1$, in the first family $\mathbb{D}_{\sL_{r+1}, \omega_1^{n},\ell}$, where $n=(r+1)(\ell+1)$, the divisors contract  exactly the same curves as maps $\rho_{\mathcal{A}}$ to Hassett's moduli spaces $\ovop{M}_{0,\mathcal{A}}$, with weights $\mathcal{A}=(\frac{1}{\ell+r},\ldots,\frac{1}{\ell+r})$ (Corollary \ref{Complete}).  Hassett spaces are defined in Section \ref{Hassett}.   In particular, as we explain, if  the $S_n$ invariant F- conjecture holds, then the normalization of the image of their associated maps will  be isomorphic to the Hassett spaces $\ovop{M}_{0,\mathcal{A}}$.

 We show in Section \ref{Kostka}, that while the  morphisms given by the divisors $\mathbb{D}_{\sL_{r+1}, \{\alpha_i\omega_1\}^n_i,\ell}$ factor through Hassett spaces $\ovop{M}_{0,\mathcal{A}}$, where $\mathcal{A}=(\frac{\alpha_1}{\ell+r},\ldots,\frac{\alpha_n}{\ell+r})$, it is possible to exhibit vectors $\vec{\alpha}=(\alpha_1,\ldots,\alpha_n)$ so that $\mathbb{D}_{\sL_{r+1}, \{\alpha_i\omega_1\}^n_i,\ell}$ contracts more curves than $\rho_{\mathcal{A}}$.   In particular, the images of the maps given by $\mathbb{D}_{\sL_{r+1}, \{\alpha_i\omega_1\}^n_i,\ell}$ are not, in general, isomorphic to $\ovop{M}_{0,\mathcal{A}}$.


\subsection{A note on our methods} \medskip
The main results of this paper are proved by using identifications of conformal blocks to generalized theta functions \cite{pauly} and a version of the quantum generalization of the Horn conjecture \cite{b4}. The applications use standard intersection theoretic computations on $\ovop{M}_{0,\op{n}}$,  the factorization formulas of \cite{TUY}, and  applications of the quantum cohomology of Grassmannians to conformal blocks \cite{witten} (also \cite{b4} and \cite{BGMA}). We recommend the article of Sorger ~\cite{sorger} for a concise primer on the definition of conformal blocks.

It is an interesting question whether our results can be proved by numerical formulas for the first Chern classes \cite{Fakh} and ranks
(cf. \cite{sorger}) of conformal block bundles. We have not been able to do so because of difficulties with factorization data and
Casimir values (also see \cite{BGMA}).

\subsection{Acknowledgements}
We thank N. Fakhruddin, D. Jensen, A. Kazanova, S. Kumar, and  H-B. Moon for useful discussions.

 In \cite{Fakh}, Fakhruddin gives explicit formulas for the classes $c_1 (\mathbb{V}_{\mg,\vec{\lambda},\ell})$, and the intersections of $c_1 (\mathbb{V}_{\mg,\vec{\lambda},\ell})$ with F-Curves. These formulas have been implemented by Swinarski in Macaulay2 \cite{ConfBlocks}, which we have used to do many examples.

P.B. and S.M.  were supported on NSF grant  DMS-0901249, and A.G. on DMS-1201268.

\section{Additive identities between divisors when rank conditions are satisfied}\label{tensorr}

In this section we prove Proposition \ref{Additive} which shows that when certain rank conditions are satisfied, then divisors decompose as sums reflecting the decomposition of their weights and levels into sums.  This result enables one to simplify questions of vanishing of a particular divisor into problems about its simpler constituent parts.  But there have been more applications as well.  For example,
in \cite{Kaz} this result was used to prove that any $\op{S}_n$-invariant divisor for $\sL_n$ on $\ovop{M}_{0,n}$ coming from a bundle of rank one was in fact a sum of level one divisors in type $A$.  In particular, the cone generated by infinitely many such divisors is finitely generated.

\subsection{Behavior under tensor products}
Let $G$ be a simple, simply connected algebraic group with Borel subgroup $B$ and Lie algebra $\frg$. Let $\hat{\frg}$ denote the corresponding affine Lie algebra (see e.g., \cite{sorger}). For a dominant integral weight $\lambda$  in $P_{\ell}(\frg)$, let $V_{\lambda}$ denote the corresponding finite dimensional representation of $\frg$ with highest weight $\lambda$. Let $\mathcal{H}_{\lambda,\ell}$ denote the corresponding irreducible representation of $\hat{\frg}$. Note that $V_{\lambda}\subseteq \mathcal{H}_{\lambda,\ell}$ (we simply write $\mathcal{H}_{\lambda}$ when the level $\ell$ is clear from the context).

As is explained for example in \cite{manon}, if $\lambda$ and $\nu$ are dominant integral weights in
$P_{\ell}(\frg)$ and ${P}_{m}(\frg)$,
 then one has a canonical inclusion, mapping highest weight vectors to tensor products of highest weight vectors
 $\mh_{\mu+\nu,\ell+m}\subseteq \mh_{\mu,\ell}\tensor\mh_{\nu,m}$,
which restricts to a natural inclusion
$V_{\mu+\nu}\subseteq V_{\mu}\tensor V_{\nu}$.

Suppose $\vec{\mu}=(\mu_1,\dots,\mu_n)$ and $\vec{\nu}=(\nu_1,\dots,\nu_n)$ are $n$-tuples of dominant integral weights in $P_{\ell}(\frg)$ and ${P}_{m}(\frg)$.   There is a natural diagram of vector bundles on $\overline{\operatorname{M}}_{0,n}$, with surjective vertical arrows (cf. \cite{manon}).
\begin{equation}\label{sibel}
\xymatrix{
\Bbb{A}_{\frg,\vec{\mu}+\vec{\nu}}\ar[r]\ar[d] & \Bbb{A}_{\frg,\vec{\mu}}\tensor \Bbb{A}_{\frg,\vec{\nu}}\ar[d]\\
 \mathbb{V}_{\frg,\vec{\mu}+\vec{\nu},\ell+m}\ar[r]^-{\phi} & \mathbb{V}_{\frg,\vec{\mu},\ell}\tensor \mathbb{V}_{\frg,\vec{\nu},m}.
}
\end{equation}

Now suppose
\begin{equation}\label{fassump}
\rk \mathbb{V}_{\frg,\vec{\mu},\ell}=1.
\end{equation}
Then we claim
\begin{proposition}\label{additive}
The map $\phi$ is a surjection. If
\begin{equation}\label{rankassumption}
\rk \mathbb{V}_{\frg,\vec{\mu}+\vec{\nu},\ell+m}= \rk \mathbb{V}_{\frg,\vec{\nu},m}=\delta,
\end{equation}
then
the map $\phi$ is an isomorphism, and hence
$$\mathbb{D}_{\frg,\vec{\mu}+\vec{\nu},\ell+m}= \delta\cdot\mathbb{D}_{\frg,\vec{\mu},\ell}+\mathbb{D}_{\frg,\vec{\nu},m}.$$
\end{proposition}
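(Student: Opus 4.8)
The displayed divisor identity is a formal consequence of the isomorphism assertion, so the content lies in the two bundle statements about $\phi$: surjectivity under \eqref{fassump}, and bijectivity once \eqref{rankassumption} is also imposed. Granting the isomorphism $\mathbb{V}_{\frg,\vec\mu+\vec\nu,\ell+m}\cong\mathbb{V}_{\frg,\vec\mu,\ell}\otimes\mathbb{V}_{\frg,\vec\nu,m}$, and writing $L=\mathbb{V}_{\frg,\vec\mu,\ell}$ (a line bundle, by \eqref{fassump}) and $F=\mathbb{V}_{\frg,\vec\nu,m}$ (of rank $\delta$), the splitting principle gives $c_1(L\otimes F)=(\rk F)c_1(L)+(\rk L)c_1(F)=\delta\,c_1(L)+c_1(F)$, which is exactly $\delta\,\mathbb{D}_{\frg,\vec\mu,\ell}+\mathbb{D}_{\frg,\vec\nu,m}$.

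The heart of the argument is surjectivity of $\phi$, which I would prove by passing to duals. A morphism of vector bundles is surjective (fiberwise) if and only if its dual is injective on every fiber, so it suffices to check the latter for $\phi^{\vee}$ over each point $x\in\ovop{M}_{0,n}$ (including boundary points, where \eqref{sibel} is equally valid). Dualizing \eqref{sibel} converts the vertical surjections into the fiberwise injections $\mathbb{V}^{\vee}_{\frg,\vec\mu,\ell}\hookrightarrow\Bbb{A}^{\vee}_{\frg,\vec\mu}$, $\mathbb{V}^{\vee}_{\frg,\vec\nu,m}\hookrightarrow\Bbb{A}^{\vee}_{\frg,\vec\nu}$, $\mathbb{V}^{\vee}_{\frg,\vec\mu+\vec\nu,\ell+m}\hookrightarrow\Bbb{A}^{\vee}_{\frg,\vec\mu+\vec\nu}$, and identifies $\phi^{\vee}$ with the restriction to these subbundles of the map dual to the top row. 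That dual map is the \emph{Cartan multiplication}: at each marked point it is induced by the surjection $V^{*}_{\mu_i}\otimes V^{*}_{\nu_i}\twoheadrightarrow V^{*}_{\mu_i+\nu_i}$ dual to the canonical inclusion $V_{\mu_i+\nu_i}\subseteq V_{\mu_i}\otimes V_{\nu_i}$; it is independent of $x$, and it makes $\bigoplus_{\vec\lambda}\Bbb{A}^{\vee}_{\frg,\vec\lambda}=\bigoplus_{\vec\lambda}\bigl(\bigotimes_{i}V^{*}_{\lambda_i}\bigr)^{\frg}$ into a ring. This ring is an integral domain: it is a ring of $G$-invariants of $R^{\otimes n}$, where $R=\bigoplus_{\lambda}V^{*}_{\lambda}$ with Cartan multiplication is the coordinate ring of the base affine space of $G$, itself a domain. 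Now fix $x$: by \eqref{fassump} the fiber $\mathbb{V}^{\vee}_{\frg,\vec\mu,\ell}|_{x}$ is a line, spanned by some $\beta_0\neq 0$ in $\Bbb{A}^{\vee}_{\frg,\vec\mu}$. Multiplication by $\beta_0$ is injective in this domain, hence also on the subspace $\mathbb{V}^{\vee}_{\frg,\vec\nu,m}|_{x}$; since every element of $\mathbb{V}^{\vee}_{\frg,\vec\mu,\ell}|_{x}\otimes\mathbb{V}^{\vee}_{\frg,\vec\nu,m}|_{x}$ is of the form $\beta_0\otimes\gamma$, it follows that $\phi^{\vee}|_{x}$ is injective, so $\phi$ is surjective.

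Given surjectivity, the rest is immediate. Under \eqref{rankassumption} the source $\mathbb{V}_{\frg,\vec\mu+\vec\nu,\ell+m}$ of $\phi$ has rank $\delta$, and by \eqref{fassump} so does the target $\mathbb{V}_{\frg,\vec\mu,\ell}\otimes\mathbb{V}_{\frg,\vec\nu,m}$ (of rank $1\cdot\delta$). A surjection of vector bundles of equal rank on the integral variety $\ovop{M}_{0,n}$ is an isomorphism, since its kernel is locally a direct summand of a locally free sheaf, hence locally free of rank $0$, hence $0$. The divisor identity then follows from the $c_1$ computation of the first paragraph.

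The step I expect to demand the most care is the identification in the second paragraph — that the dual of the top arrow of \eqref{sibel} is exactly Cartan multiplication on $\bigoplus_{\vec\lambda}\bigl(\bigotimes_i V^{*}_{\lambda_i}\bigr)^{\frg}$ and is compatible with the inclusions of the bundles $\mathbb{V}^{\vee}$. This dictionary is essentially the content of \cite{manon}; once it is in place the surjectivity argument is formal. (One can alternatively prove fiberwise injectivity of $\phi^{\vee}$ by induction on $n$ using the factorization rules of \cite{TUY}, but the invariant-ring argument above seems cleanest.)
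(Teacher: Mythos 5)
Your proof is correct and follows essentially the same route as the paper: dualize \eqref{sibel}, use \eqref{fassump} to reduce fiberwise injectivity of $\phi^{\vee}$ to the statement that the product of nonzero elements of ${A}_{\frg,\vec{\mu}}^{*}$ and ${A}_{\frg,\vec{\nu}}^{*}$ is nonzero in ${A}_{\frg,\vec{\mu}+\vec{\nu}}^{*}$, and prove that by realizing these spaces as $G$-invariant sections/functions on a product of flag (or base affine) varieties, where nonzero sections multiply to nonzero sections by integrality. The only cosmetic difference is that the paper phrases the last step via $H^{0}((G/B)^{n},\ml_{\vec{\lambda}})^{G}$ rather than the invariant subring of the coordinate ring of $(G/U)^{n}$; these are the same argument.
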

\begin{proof}
We will assume \eqref{rankassumption} and show that the dual map of $\phi$ is an isomorphism (below we show that the dual map is always injective fiber wise under the assumption \eqref{fassump}).

Let $y$ be an arbitrary closed point of $\overline{\operatorname{M}}_{0,n}$. Let $u$ and $v$ be non-zero elements of $\mathbb{V}_{\frg,\vec{\mu},\ell}|_y^*$ and $\mathbb{V}_{\frg,\vec{\nu},m}|_y^*$  respectively.
Let $\bar{u}$ and $\bar{v}$ denote their (non-zero) images in  $\Bbb{A}_{\frg,\vec{\mu}}|_y^*$ and $\Bbb{A}_{\frg,\vec{\nu}}|_y^*$ respectively.
We want to show that the image of $\bar{u}\tensor\bar{v}$ in $\Bbb{A}_{\frg,\vec{\mu}+\vec{\nu}}|_y^*$ is non-zero.  
 It suffices (using \eqref{fassump}) to prove the following (classical) statement: If $\alpha$ and $\beta$ are non-zero elements
in ${A}_{\frg,\vec{\mu}}^*$ and ${A}_{\frg,\vec{\nu}}^*$ respectively, then the image of $\alpha\tensor\beta$ under
$${A}_{\frg,\vec{\mu}}^*\tensor {A}_{\frg,\vec{\nu}}^*\to  {A}_{\frg,\vec{\mu}+\vec{\nu}}^*$$
is non-zero.
Write  commutative diagrams for each $i$
\begin{equation}\label{sibel3}
\xymatrix{
G/B    \ar[r]^-{\Delta}\ar[d] & G/B\times G/B\ar[d]\\
 \Bbb{P}(V_{\mu_i+\nu_i})\ar[r]& \Bbb{P}(V_{\mu_i}\tensor V_{\nu_i})
}
\end{equation}
For every dominant integral weight $\lambda$ there is a line bundle $\ml_{\lambda}$ on $G/B$ whose global sections equal
$V_{\lambda}^*$ ($\ml_{\lambda}$ is the pull back of $\mathcal{O}(1)$ via the map $G/B\to \Bbb{P}(V_{\lambda})$).
Note that $X=(G/B)^n$ carries a diagonal action of $G$. Define the following line bundle for every $\vec{\lambda}$:
$\ml_{\vec{\lambda}}=\boxtimes_{i=1}^n\ml_{\lambda_i}$.
Note that  ${A}_{\frg,\vec{\lambda}}^*=H^0(X,\ml_{\vec{\lambda}})^G$. Under the multiplication map (induced by $n$ fold product of the diagram \eqref{sibel3}),
$H^0(X,\ml_{\vec{\mu}})^G\times H^0(X,\ml_{\vec{\nu}})^G\to H^0(X,\ml_{\vec{\mu}+\vec{\nu}})^G$
, the image of $\alpha\times \beta$ is non-zero (because locally we are forming the product of non-zero functions on $X$). This implies the
  desired non-vanishing.
\end{proof}

\subsection{First application: scaling for divisors associated to rank one bundles}
Fulton conjectured that if $\rk\clar=1$ then  $\rk \Bbb{A}_{\sL_{r+1},N\vec{\lambda}}=1$ $\forall$ $N\in \mathbb{Z}_{>0}$. This was proved by Knutson, Tao and Woodward \cite{KTW2}.  The quantum generalization of Fulton's conjecture \cite{fultonconjecture,qdeform} is the following: Suppose $\rk\vr=1$ ($\ell$ is not necessarily the critical level) then $\rk\Bbb{V}_{\sL_{r+1},N\vec{\lambda},N\ell}=1$ for all positive integers $N$. Using this generalization and Proposition \ref{additive}, we obtain (by induction):
\begin{corollary}\label{fulton}
If $\rk\vr=1$, then $\Bbb{D}_{\sL_{r+1},N\vec{\lambda},N\ell}=N\cdot \Bbb{D}_{\sL_{r+1},\vec{\lambda},\ell}$,  $\forall$ $N\in \mathbb{Z}_{>0}$.
\end{corollary}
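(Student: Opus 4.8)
The plan is to deduce Corollary \ref{fulton} from Proposition \ref{additive} by an induction on $N$, using the quantum generalization of Fulton's conjecture to verify the rank hypotheses at each stage. The base case $N=1$ is trivial. Assume the identity $\Bbb{D}_{\sL_{r+1},N\vec{\lambda},N\ell}=N\cdot \Bbb{D}_{\sL_{r+1},\vec{\lambda},\ell}$ holds for some $N\ge 1$; I would then apply Proposition \ref{additive} with $\frg=\sL_{r+1}$, $\vec{\mu}=\vec{\lambda}$, $\ell$ in the role of the level of $\vec{\mu}$, and $\vec{\nu}=N\vec{\lambda}$, $m=N\ell$. So $\vec{\mu}+\vec{\nu}=(N+1)\vec{\lambda}$ and $\ell+m=(N+1)\ell$.

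To invoke Proposition \ref{additive} I need to check its two hypotheses. The hypothesis \eqref{fassump}, namely $\rk\Bbb{V}_{\sL_{r+1},\vec{\mu},\ell}=\rk\vr=1$, is exactly the standing assumption of the corollary. The hypothesis \eqref{rankassumption}, namely $\rk\Bbb{V}_{\frg,\vec{\mu}+\vec{\nu},\ell+m}=\rk\Bbb{V}_{\frg,\vec{\nu},m}=\delta$, becomes $\rk\Bbb{V}_{\sL_{r+1},(N+1)\vec{\lambda},(N+1)\ell}=\rk\Bbb{V}_{\sL_{r+1},N\vec{\lambda},N\ell}=\delta$; by the quantum generalization of Fulton's conjecture, applied with the multipliers $N+1$ and $N$ respectively to the rank-one bundle $\vr$, both of these ranks equal $1$, so $\delta=1$ and the hypothesis is satisfied. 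I should also note in passing that $N\vec{\lambda}\in P_{N\ell}(\sL_{r+1})^n$ and $(N+1)\vec{\lambda}\in P_{(N+1)\ell}(\sL_{r+1})^n$, which is immediate from $\vec{\lambda}\in P_\ell(\sL_{r+1})^n$, so the conformal blocks bundles in question are defined.

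Proposition \ref{additive} with $\delta=1$ then yields
$$\Bbb{D}_{\sL_{r+1},(N+1)\vec{\lambda},(N+1)\ell}=\Bbb{D}_{\sL_{r+1},\vec{\lambda},\ell}+\Bbb{D}_{\sL_{r+1},N\vec{\lambda},N\ell},$$
and combining this with the inductive hypothesis $\Bbb{D}_{\sL_{r+1},N\vec{\lambda},N\ell}=N\cdot\Bbb{D}_{\sL_{r+1},\vec{\lambda},\ell}$ gives $\Bbb{D}_{\sL_{r+1},(N+1)\vec{\lambda},(N+1)\ell}=(N+1)\cdot\Bbb{D}_{\sL_{r+1},\vec{\lambda},\ell}$, completing the induction. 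There is no serious obstacle here: the only substantive input beyond Proposition \ref{additive} is the quantum Fulton conjecture, which is quoted from \cite{fultonconjecture,qdeform}, and its sole purpose is to guarantee that all the relevant ranks stay equal to $1$ so that $\delta=1$; the rest is bookkeeping. If anything, the one point to be careful about is making sure the decomposition is applied in the asymmetric form (a fixed rank-one piece $\vec{\lambda}$ at level $\ell$ added to the already-scaled piece $N\vec{\lambda}$ at level $N\ell$), rather than trying to split $(N+1)\vec{\lambda}$ more symmetrically, since Proposition \ref{additive} requires one of the two summands to have rank one.
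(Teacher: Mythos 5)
Your proof is correct and is exactly the argument the paper intends: the paper's proof of Corollary \ref{fulton} consists of the single remark that the identity follows ``by induction'' from Proposition \ref{additive} together with the quantum generalization of Fulton's conjecture, and you have simply written out that induction, with the right asymmetric split $(N+1)\vec{\lambda}=\vec{\lambda}+N\vec{\lambda}$ and the quantum Fulton conjecture supplying $\delta=1$ at each step. Nothing is missing.
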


\begin{remark} Corollary \ref{fulton} appears in case  $r=1$ and $\vec{\lambda}=(\omega_1,\ldots,\omega_1)$ in \cite{gjms}. An analogous result for $\frg=\mathfrak{so}_{2r+1}$ appears in ~\cite{SM1}.
\end{remark}
\begin{corollary}\label{LevelOneMaps}  Let $\mathbb{D}=\mathbb{D}_{\sL_{r+1}, \ell \vec{\lambda},\ell}$ be a nontrivial conformal blocks divisor, so that $\sum_{i}|\lambda_i|=(r+1)(d+1)$.
The image of the morphism $\phi_{\mathbb{D}}$ is isomorphic to the Veronese quotient $U_{d,n}\git_{(0,\mathcal{A})}\operatorname{SL}(d+1)$, where $a_i=|\lambda_i|/(r+1)$.
\end{corollary}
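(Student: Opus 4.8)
The plan is to identify $\phi_{\mathbb{D}}$ explicitly as the Plücker-composed morphism $p \circ f_{\mathbb{V}}$ of \eqref{mapG} and then recognize that target map as a known GIT construction. First I would apply Corollary \ref{fulton} (with $N = \ell$ applied to the level-one bundle $\mathbb{V}_{\sL_{r+1},\vec{\lambda},1}$, after checking $\rk \mathbb{V}_{\sL_{r+1},\vec{\lambda},1} = 1$, which holds because $\mathbb{D}$ is nontrivial by hypothesis and a rank-one level-one bundle is what remains): this gives $\mathbb{D}_{\sL_{r+1},\ell\vec{\lambda},\ell} = \ell \cdot \mathbb{D}_{\sL_{r+1},\vec{\lambda},1}$. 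So it suffices to understand the morphism attached to the level-one divisor $\mathbb{D}_{\sL_{r+1},\vec{\lambda},1}$, since scaling a divisor by a positive integer does not change the associated morphism (only the polarization on the image).

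Next I would use the classical description of level-one conformal blocks for $\sL_{r+1}$ in terms of $\operatorname{GL}_{r+1}$ (or $\operatorname{SL}_{r+1}$) invariant theory. When all weights are of the form $a_i \omega_1$ — which is the case here after passing from the Young diagram $\lambda_i$ with $|\lambda_i| = (r+1)(d+1)/n \cdot \text{(appropriate count)}$, noting $a_i = |\lambda_i|/(r+1)$ — the space of coinvariants $\op{A}_{\sL_{r+1},\vec{\lambda}}$ and the conformal block $\mathbb{V}_{\sL_{r+1},\vec{\lambda},1}$ are computed by spaces of $\operatorname{SL}(d+1)$-invariants on configurations of points, via the identification of level-one $\sL_{r+1}$ conformal blocks on $\pone$ with sections of line bundles on $\operatorname{Gr}(r+1, \cdot)$ or directly with $\operatorname{SL}(d+1)$-invariant sections on $(\pone)^n$ or $(\mathbb{P}^d)^n$ — this is precisely where the Veronese/GIT picture of \cite{Giansiracusa, GJM} enters. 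I would cite the construction of the Veronese quotient $U_{d,n} \git_{(0,\mathcal{A})} \operatorname{SL}(d+1)$ and match the linearization data: the weight vector $\mathcal{A}$ records the $a_i$, and the "$0$" records that there is no contribution from the $\omega$-parameter (the level-one, not higher-twisted, case).

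The concrete matching I would carry out: by \eqref{mapG} and the definition of $\phi_{\mathbb{D}} = p \circ f_{\mathbb{V}}$, the image of $\phi_{\mathbb{D}}$ is the closure of the image of $\overline{\operatorname{M}}_{0,n}$ under the map to projective space defined by the complete (or at least globally generated) linear system corresponding to $\mathbb{D}$. For $\mathbb{D}_{\sL_{r+1},\vec{\lambda},1}$ this linear system, by the Verlinde-type identification in level one, is exactly the space of $\operatorname{SL}(d+1)$-invariant sections of the natural polarization on the parameter space $U_{d,n}$ of degree-$d$ Veronese-type data, restricted along the map that sends a stable pointed rational curve to its associated configuration. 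Hence $\operatorname{Image}(\phi_{\mathbb{D}})$ is the $\operatorname{Proj}$ of the invariant ring, i.e. the GIT quotient $U_{d,n} \git_{(0,\mathcal{A})} \operatorname{SL}(d+1)$, with $a_i = |\lambda_i|/(r+1)$.

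**The main obstacle** will be the precise dictionary between the conformal-blocks side and the Veronese-GIT side: one must verify that the space of global sections of $\mathbb{D}_{\sL_{r+1},\vec{\lambda},1}$ matches, as a graded ring summed over multiples of $\mathbb{D}$, the invariant ring defining $U_{d,n}\git_{(0,\mathcal{A})}\operatorname{SL}(d+1)$, including getting the combinatorics of $d$, the $a_i$, and the linearization parameters exactly right (the shift by $\omega$ versus $0$, and which group — $\operatorname{SL}(d+1)$ versus $\operatorname{GL}$ — acts). The scaling identity of Corollary \ref{fulton} is what makes this clean, because it lets us pass freely between $\mathbb{D}_{\sL_{r+1},\ell\vec{\lambda},\ell}$ and its level-one avatar without worrying that the section ring is not generated in degree one; but reconciling the normalizations in \cite{Giansiracusa} and \cite{GJM} with the conventions here (normalized Young diagrams, $|\lambda_i|$, the Killing form normalization) is the delicate bookkeeping that the proof will need to spell out carefully.
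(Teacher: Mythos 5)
Your overall architecture is the paper's: reduce to the level-one divisor via the scaling identity of Corollary \ref{fulton}, then identify the image of the level-one map with the Veronese quotient. But there is one genuine gap in your reduction. To invoke Corollary \ref{fulton} you must first know $\operatorname{rk}\mathbb{V}_{\sL_{r+1},\vec{\lambda},1}=1$, and your justification --- ``which holds because $\mathbb{D}$ is nontrivial by hypothesis and a rank-one level-one bundle is what remains'' --- is not an argument. The hypothesis gives you nonvanishing of $\operatorname{rk}\mathbb{V}_{\sL_{r+1},\ell\vec{\lambda},\ell}$, and passing from that to nonvanishing of $\operatorname{rk}\mathbb{V}_{\sL_{r+1},\vec{\lambda},1}$ is a real step (it is the ``converse'' direction of scaling, not covered by the quantum Fulton conjecture, which only goes from level one upward). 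The paper gets it from Proposition \ref{hornp}: in the multiplicative eigenvalue characterization the diagonal matrices attached to $(\ell\lambda_i,\ell)$ and to $(\lambda_i,1)$ are literally the same, since $\exp(2\pi i\,\ell\lambda_i^{(a)}/\ell)=\exp(2\pi i\,\lambda_i^{(a)}/1)$, so the two nonvanishing conditions are equivalent; then factorization forces a nonzero level-one type $A$ bundle to have rank exactly one \cite{Fakh}. You need some version of this to start.

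For the second half, you propose to rebuild the entire dictionary between level-one conformal blocks and $\operatorname{SL}(d+1)$-invariant sections on $U_{d,n}$; you correctly flag this as the main obstacle, but it is precisely the content of \cite[Theorem 1.2]{GiansiracusaGibney}, which the paper simply cites. There is no need to reprove it, and attempting to do so in the space of this corollary would be disproportionate; once you have $\mathbb{D}_{\sL_{r+1},\ell\vec{\lambda},\ell}=\ell\cdot\mathbb{D}_{\sL_{r+1},\vec{\lambda},1}$ (so the two semiample classes span the same ray and hence give the same morphism up to isomorphism of the image), that citation finishes the proof.
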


\begin{proof}
By assumption, $\lambda_1,\dots,\lambda_n\in P_1(\sL_{r+1})$ and $\rk \mathbb{V}_{\sL_{r+1},\ell\vec{\lambda},\ell}\neq 0$.
Therefore, by  Proposition \ref{hornp}, $\operatorname{rk}\mathbb{V}_{\sL_{r+1},\vec{\lambda},1}\ne 0$.  Using factorization, $\operatorname{rk}\mathbb{V}_{\sL_{r+1},\vec{\lambda},1}\ne 0$ implies that
$\operatorname{rk}\mathbb{V}_{\sL_{r+1},\vec{\lambda},1}=1$ \cite{Fakh}.  So by Corollary \ref{fulton},
$\Bbb{D}_{\sL_{r+1},\ell\vec{\lambda},\ell}=\ell \cdot \Bbb{D}_{\sL_{r+1},\vec{\lambda},1}$ for all positive integers $\ell$.  Applying \cite[Theorem 1.2]{GiansiracusaGibney} gives the assertion.
\end{proof}

\subsection{Second application: fibrations given by conformal blocks divisors whose weights are all nonzero}\label{Fibration}
Nonzero divisors $\mathbb{D}=\mathbb{D}_{\frg,\vec{\lambda},\ell}$ with (some) zero weights give rise to maps with images that are not birational to $\ovop{M}_{0,n}$.   In many known examples of conformal blocks divisors, where all weights are nonzero, the images of associated maps are birational to $\ovop{M}_{0,n}$.  However, this is not always so, as the  examples below show.

\subsection{Examples of non  birational morphisms}\label{odd}
Let  $n=2k+1$, for $k \ge 2$, and $\frg=\sL_{2k+1}$. We define the following bundles:
\begin{enumerate}
\item For $\vec{\mu}=(\underset{k\ factors}{\omega_1,\ldots,\omega_1},\underset{k\ factors}{\omega_{2k},\ldots,\omega_{2k}},0)$ at level $\ell=1$. We know that  $\mathbb{D}_{\sL_{2k+1}, \vec{\mu}, 1} \ne 0$, since this is the pullback of a nonzero bundle from $\ovop{M}_{0,2k}$ \cite[Proposition 2.4, (1)]{Fakh}.
\item For $\vec{\nu}=(\omega_1,\ldots, \omega_{1},\omega_{1})$, consider $\mathbb{D}_{\sL_{2k+1}, \vec{\nu}, m}$ at level $m=1$.
\item For  $\vec{\mu}+\vec{\nu}= (\underset{k}{2\omega_1,\ldots,2\omega_1},\underset{k}{\omega_1+\omega_{2k},\ldots,\omega_1+\omega_{2k}}, \omega_{1})$, we consider  $\mathbb{D}_{\sL_{2k+1},\vec{\mu}+\vec{\nu}, \ell+m}$, at level $\ell+m=2$.
\end{enumerate}
One must check that   $\mathbb{V}_{\sL_{2k+1},\vec{\mu}+\vec{\nu}, \ell+m}$  has rank $1$, so that Proposition \ref{additive} is applicable, and
$$\mathbb{D}_{\frg,\vec{\mu}+\vec{\nu},\ell+m}= \delta\cdot\mathbb{D}_{\frg,\vec{\mu},\ell}+\mathbb{D}_{\frg,\vec{\nu},m},$$
and here $\delta =1$, since bundles in type $A$ of level one have rank one.

To check the rank of $\mathbb{V}_{\sL_{2k+1},\vec{\mu}+\vec{\nu}, \ell+m}$   is one, we use Witten's theorem relating quantum cohomology and conformal blocks (see e.g., \cite{BGMA}), which says that the rank of $\mathbb{V}_{\sL_{2k+1},\vec{\mu}+\vec{\nu}, \ell+m}$   is equal to the coefficient of the class of $q^{k-1}\sigma_{2 \omega_{2k+1}}$ in the quantum product:
$$\sigma_{2\omega_1}^{\star (k)} \star \sigma_{\omega_1 + \omega_{2k}}^{\star (k)} \star \sigma_{\omega_1} \star \sigma_{2\omega_1}^{\star (k-1)} \in \op{QH}^{\star}(\op{Gr}(2k+1,2k+1+2)).$$
(the standard notation of cycle classes of Schubert varieties, as well as the definition of quantum cohomology appear for example in \cite{BGMA})

Now, by quantum Pieri, $\sigma_{\omega_1 + \omega_{2k}}=\sigma_{2\omega_1}\star\sigma_{\omega_{2k-1}}$ and
 $\sigma_{2\omega_1}^{\star (2k)}\star  \sigma_{\omega_1}= \sigma_{\omega_{2k+1}+\omega_{2k}}$.

Our coefficient is therefore the same as the coefficient of
the class of
of $q^{k-1}\sigma_{2 \omega_{2k+1}}$ in the quantum product:
$$ \sigma_{\omega_{2k-1}}^{\star (k)} \star \sigma_{\omega_{2k+1}+\omega_{2k}} \star \sigma_{2\omega_1}^{\star (k-1)}\in \op{QH}^{\star}(\op{Gr}(2k+1,2k+1+2))$$
which again by Witten's theorem is the rank of the conformal block $\mathbb{V}_{\sL_{2k+1},\vec{\gamma}, 2}$ with
$\gamma_1=\dots=\gamma_k=\omega_{2k-1}$ and $\gamma_{k+1}=\omega_{2k+1}+\omega_{2k}$. By \cite{Beauville}, we may dualize to find the ranks, so our
rank is the same as the rank of the conformal block $\mathbb{V}_{\sL_{2k+1},\vec{\gamma}^*, 2}$. But $\gamma^*$ is the collection of weights
$(\omega_2)^k$ and $\omega_{1}$. This critical level for the block $\mathbb{V}_{\sL_{2k+1},\vec{\gamma}^*,2}$ is zero and we can compute the rank of the block at level $1$ as the space of classical coinvariants. The rank is therefore $1$ by Pieri.


But $\mathbb{D}_{\frg,\vec{\nu},m}=0$, since the critical level is zero (see  \cite{Fakh}). Moreover, since
$\mathbb{D}_{\frg,\vec{\mu},\ell}$ is pulled back from $\overline{\operatorname{M}}_{0,2k}$,  $\mathbb{D}_{\frg,\vec{\mu}+\vec{\nu},\ell+m}$ is a non-trivial conformal blocks divisor on  $\overline{\operatorname{M}}_{0,2k+1}$ pulled back from
$\overline{\operatorname{M}}_{0,2k}$, and hence does not correspond to a birational map, but rather a fibration. We note that this could not practically be checked by computer for $k$ beyond $4$.

\subsection{Third application: Using decomposition to explain vanishing}\label{Intro}

As was mentioned in the introduction, Proposition \ref{additive} explains the vanishing of the critical level divisor $\mathbb{D}_{\sL_4, \{\omega_1, (2\omega_1+\omega_3)^3 \}, 3}$.  A calculation shows that $\op{rk}\mathbb{V}_{\sL_4, \{\omega_1, (2\omega_1+\omega_3)^3 \}, 3}=1$, while the coinvariants have rank $2$. The map \eqref{mapG} maps to a positive dimensional Grassmannian, and so one does not expect the first Chern class to be zero.

Noting that by a calculation,
$$\op{rk} \mathbb{V}_{\sL_4, \{0, (\omega_1+\omega_3)^3 \}, 2}=\op{rk}\mathbb{V}_{\sL_4, \{\omega_1, \ldots, \omega_1 \}, 1}=1,$$
one can write this divisor as a sum
$$\mathbb{D}_{\sL_4, \{\omega_1, (2\omega_1+\omega_3)^3 \}, 3}=\mathbb{D}_{\sL_4, \{\omega_1, \ldots, \omega_1 \}, 1} + \mathbb{D}_{\sL_4, \{0, (\omega_1+\omega_3)^3 \}, 2}.$$ But since $\mathbb{D}_{\sL_4, \{\omega_1, \ldots, \omega_1 \}, 1}$ is trivial since the sum of the areas of the weights is $4$, and $\mathbb{D}_{\sL_4, \{0, (\omega_1+\omega_3)^3 \}, 2}$ is trivial,  being pulled back from $\ovop{M}_{0,3}$, one sees why $\mathbb{D}_{\sL_4, \{\omega_1, (2\omega_1+\omega_3)^3 \}, 3}$ is trivial as well.

\medskip

One can write down other similar examples.  For instance, while $$1=\op{rk}\mathbb{V}_{\sL_4, \{\omega_2+\omega_3, \omega_1, \omega_1+2\omega_2, 2\omega_1+\omega_3 \}, 3}<\op{rk}\mathbb{A}_{\sL_4, \{\omega_2+\omega_3, \omega_1, \omega_1+2\omega_2, 2\omega_1+\omega_3 \}} = 2,$$ one can argue  that  the critical level divisor $\mathbb{D}_{\sL_4, \{\omega_2+\omega_3, \omega_1, \omega_1+2\omega_2, 2\omega_1+\omega_3 \}, 3}$, which is dual to itself under critical level duality, is a sum of three divisors  pulled back from $\ovop{M}_{0,3}$:
$$\mathbb{D}_{\sL_4, \{\omega_2+\omega_3, \omega_1, \omega_1+2\omega_2, 2\omega_1+\omega_3 \}, 3}=\mathbb{D}_{\sL_4, \{\omega_2, 0,\omega_1, \omega_1 \}, 1}
+ \mathbb{D}_{\sL_4, \{\omega_3, 0, \omega_2, \omega_3\}, 1} + \mathbb{D}_{\sL_4, \{0, \omega_1, \omega_2, \omega_1\}, 1}.$$

\section{Decomposition with respect to the Lie Algebra}

\subsection{Nonvanishing of conformal blocks divisors for $\sL_2$}

In this section we prove Corollary \ref{metric} which is a key step in our nonvanishing results.  The first part will be used in the proof of non-vanishing criteria for  conformal blocks divisors in Theorems \ref{mon1} and \ref{mon2}.  The second part says that sub-critical level conformal blocks divisors for $\sL_2$ are non-zero as long as their ranks are not equal to zero (compare with \cite{Fakh}, and B. Alexeev's formula \cite[(3.5)]{Swin})).


\subsubsection{Generalities}\label{Set}
Let  $x=(z_1,\dots,z_n)$ be an $n$-tuple of distinct points in $\Bbb{A}^1\subseteq \pone$. Set $A={A}_{\frg,\vec{\lambda}}$ and denote by
 $C_{x}$, the image of the map $T_{x}^{\ell+1}:W\to W$, where $W=V_{\lambda_1}\tensor\dots\tensor V_{\lambda_n}$ and
$T_{x}=\sum_{i=1}^n z_i e^{(i)}_{\theta}$ with $e^{(i)}_{\theta}$ acting on the $i$th coordinate.

Then by  \cite[Proposition 4.1]{Beauville} and \cite[Section 1.1]{FSV2},
\begin{lemma}The fiber of $\mathbb{V}_{\frg,\vec{\lambda},\ell}$ at $x$ is the cokernel of the natural map $C_{x} \to A$.
\end{lemma}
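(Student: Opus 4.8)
The statement identifies the fiber $\mathbb{V}_{\frg,\vec{\lambda},\ell}|_x$ with $\operatorname{coker}(C_x \to A)$, where $A = A_{\frg,\vec{\lambda}} = (V_{\lambda_1}\tensor\cdots\tensor V_{\lambda_n})_{\frg}$ is the space of classical coinvariants and $C_x = \operatorname{im}(T_x^{\ell+1})$ with $T_x = \sum_i z_i e_\theta^{(i)}$. The plan is to invoke the explicit presentation of conformal blocks due to Beauville and Feigin--Schechtman--Varchenko, rather than to reprove it from scratch. First I would recall that the dual vector space of conformal blocks (conformal coinvariants) admits a description as $W_\frg / (\text{image of certain "depth} \le \ell\text{" operators})$; more precisely, fixing coordinates so that the $n$ points lie in $\mathbb{A}^1 \subseteq \pone$ and choosing the point at infinity as an auxiliary point, the gauge symmetry coming from $\frg$-valued rational functions regular away from the $z_i$ and vanishing appropriately at $\infty$ cuts $W$ down. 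The key input from \cite[Proposition 4.1]{Beauville} and \cite[Section 1.1]{FSV2} is that, after taking classical coinvariants (which handles the constant gauge transformations, i.e.\ the global $\frg$-action), the only remaining relations come from the single "most negative" direction associated to the highest root $\theta$, and they are generated in a precise sense by $T_x^{\ell+1}$.

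Concretely, the steps I would carry out are: (1) State that $\mathbb{V}_{\frg,\vec{\lambda},\ell}|_x$ is by definition the space of conformal blocks attached to $(\pone; z_1,\dots,z_n; \vec{\lambda})$, i.e.\ $\operatorname{Hom}_{\frghat}$-type coinvariants of $\mathcal{H}_{\lambda_1}\tensor\cdots\tensor\mathcal{H}_{\lambda_n}$ under the algebra of $\frg$-valued functions on $\pone \setminus \{z_1,\dots,z_n\}$, using \cite{TUY, sorger}. (2) Apply the result of \cite{Beauville} that, because $\pone$ minus the points is affine and one can propagate the vacuum at $\infty$, this space is computed already on the finite-dimensional piece $W = V_{\lambda_1}\tensor\cdots\tensor V_{\lambda_n}$: it is the quotient of $W$ by the subspace spanned by the actions of $f \otimes X$ for $X \in \frg$ and $f$ a rational function with poles only at the $z_i$. (3) Decompose that subspace: the functions with no pole at $\infty$ (constants) give exactly the $\frg$-submodule $\frg W$, so quotienting by it produces $A = W_\frg$; the remaining relations, coming from functions with a pole at $\infty$, reduce — by the level-$\ell$ integrability condition, which bounds how negative one can go along the affine direction attached to $\theta$ — to the image of $T_x^{\ell+1}$. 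This is the content of \cite[Section 1.1]{FSV2}. (4) Conclude that the fiber is $W_\frg / \overline{C_x} = A / \operatorname{image}(C_x) = \operatorname{coker}(C_x \to A)$, where $C_x \to A$ is the composite of $C_x \hookrightarrow W$ with $W \twoheadrightarrow A$; note that $C_x$ is $\frg$-stable so this is well defined, and that by semisimplicity $W \to A$ is split, so one may equally regard $C_x$ as mapping into $A$ directly.

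The main obstacle is conceptual bookkeeping rather than hard analysis: one must be careful that the quotient description of conformal \emph{coinvariants} dualizes correctly to the statement about the bundle $\mathbb{V}_{\frg,\vec{\lambda},\ell}$ (conformal \emph{blocks}), and that the reduction of the full gauge relations to just the highest-root operator $T_x^{\ell+1}$ genuinely uses integrability at level $\ell$ and nothing more — this is exactly why the exponent is $\ell+1$ and why the relevant operator is built from $e_\theta$, since $\theta$ is the unique dominant root and $\langle \lambda, \theta^\vee\rangle \le \ell$ for all admissible $\lambda$. Since both \cite{Beauville} and \cite{FSV2} are cited as giving precisely this presentation, the proof is essentially a citation together with the identification of the two quotient constructions; I would present it as such, spelling out the $\frg$-equivariance of $C_x$ and the factorization $C_x \to W \to A$ so that "cokernel" is unambiguous.
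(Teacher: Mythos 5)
Your proposal is correct and takes essentially the same route as the paper, which states this lemma as a direct consequence of \cite[Proposition 4.1]{Beauville} and \cite[Section 1.1]{FSV2} without further argument; your steps (1)--(4) simply unpack what those references establish (propagation of vacua, reduction to the finite-dimensional piece $W$, splitting the gauge relations into the constant part giving $A=W_{\frg}$ and the residual relations generated by $T_x^{\ell+1}$ via level-$\ell$ integrability). Your added care about the blocks-versus-coinvariants dualization and the $\frg$-stability of $C_x$ is consistent with the paper's conventions, where the fiber of $\mathbb{V}_{\frg,\vec{\lambda},\ell}$ is a quotient of $\mathbb{A}_{\frg,\vec{\lambda}}$.
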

This immediately yields vanishing statements for the theta level:
\begin{lemma}\label{theta_level_vanishing}
Suppose that $\ell>\theta(\frg,\vec{\lambda})$, then   $\Bbb{D}_{\frg,\vec{\lambda},\ell}=0$.
\end{lemma}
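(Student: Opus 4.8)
The plan is to use the explicit description of the fiber of $\mathbb{V}_{\frg,\vec{\lambda},\ell}$ provided by the lemma immediately preceding the statement, together with the fact that conformal blocks divisors are base-point-free (so the bundle $\mathbb{V}_{\frg,\vec{\lambda},\ell}$ is globally generated), and show directly that under the hypothesis $\ell > \theta(\frg,\vec{\lambda})$ the bundle is in fact \emph{trivial}, i.e.\ isomorphic to a direct sum of copies of $\mathcal{O}_{\overline{\operatorname{M}}_{0,n}}$. A globally generated bundle whose fiber dimension is constant is locally free; if moreover its first Chern class vanishes on all curves it is nef of degree zero, but here I want the sharper statement that the bundle has a constant (trivial) description over the open stratum and extends, forcing $c_1=0$.

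First I would unwind the definitions: by Definition \ref{FSVCriticalLevel}, $\ell > \theta(\frg,\vec{\lambda})$ means $\ell + 1 > \tfrac12\sum_{i=1}^n \lambda_i(H_\theta)$, equivalently $\ell+1 > \sum_{i=1}^n m_i$ where $m_i$ is the largest eigenvalue of $e_\theta^{(i)}$ acting on $V_{\lambda_i}$ (recall $\lambda_i(H_\theta)$ is twice the top $\theta$-weight, and $e_\theta$ raises weight by $\theta$). The key point is that the nilpotent operator $e_\theta^{(i)}$ on $V_{\lambda_i}$ satisfies $(e_\theta^{(i)})^{m_i+1}=0$ on the $\theta$-string decomposition, hence on $W = V_{\lambda_1}\otimes\cdots\otimes V_{\lambda_n}$ the operator $T_x = \sum_i z_i e_\theta^{(i)}$ — being a sum of commuting nilpotents, since the $e_\theta^{(i)}$ act on distinct tensor factors and $[e_\theta,e_\theta]=0$ — satisfies $T_x^{\,N}=0$ for $N = \sum_i m_i + 1 \le \ell+1$. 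Therefore $C_x = \operatorname{im}(T_x^{\ell+1}) = 0$ for \emph{every} $x$, and by the preceding Lemma the fiber of $\mathbb{V}_{\frg,\vec{\lambda},\ell}$ at $x$ is $\operatorname{coker}(0 \to A) = A = {A}_{\frg,\vec{\lambda}}$.

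Next I would promote this fiberwise statement to a statement about the bundle. The surjection $\mathbb{A}_{\frg,\vec{\lambda}} = {A}_{\frg,\vec{\lambda}}\times \overline{\operatorname{M}}_{0,n} \twoheadrightarrow \mathbb{V}_{\frg,\vec{\lambda},\ell}$ from the globally-generated structure induces on each fiber the identity map (or at least an isomorphism) by the computation just made, so the surjection of vector bundles is an isomorphism on fibers of equal rank, hence an isomorphism of bundles: $\mathbb{V}_{\frg,\vec{\lambda},\ell}\cong \mathcal{O}_{\overline{\operatorname{M}}_{0,n}}^{\oplus \dim A}$. Consequently $\mathbb{D}_{\frg,\vec{\lambda},\ell} = c_1(\mathbb{V}_{\frg,\vec{\lambda},\ell}) = 0$. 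Alternatively, if one prefers to avoid identifying the surjection with an isomorphism on the nose, it suffices to note that the fiber dimension is the constant $\dim A$, so $\mathbb{V}$ is a quotient bundle of the trivial bundle of the same rank, hence equals it.

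The main obstacle — really the only subtle point — is justifying the nilpotency bound $T_x^{\ell+1}=0$ cleanly, i.e.\ verifying that $\sum_i m_i = \tfrac12\sum_i \lambda_i(H_\theta)$ so that the hypothesis $\ell>\theta(\frg,\vec{\lambda})$ gives exactly $\ell+1 > \sum_i m_i$. Here I would invoke $\mathfrak{sl}_2$-representation theory for the subalgebra $\langle e_\theta, H_\theta, f_\theta\rangle$: on $V_{\lambda_i}$ the top eigenvalue of $H_\theta$ is $\lambda_i(H_\theta)$ (since $\theta$ is the highest root, the highest weight $\lambda_i$ pairs with $H_\theta$ to give the top of an $\mathfrak{sl}_2$-string), so $(e_\theta^{(i)})^{m}$ kills $V_{\lambda_i}$ once $m > \lambda_i(H_\theta)$, hmm — more precisely once $m > \lambda_i(H_\theta)$ we have killed the longest string, but $m_i \le \lambda_i(H_\theta)$ suffices to make $(e_\theta^{(i)})^{\lambda_i(H_\theta)+1}=0$; then a standard computation with the binomial expansion of $T_x^N = (\sum_i z_i e_\theta^{(i)})^N$ over commuting nilpotents shows it vanishes once $N > \sum_i \lambda_i(H_\theta) \ge 2\theta(\frg,\vec{\lambda})$, and since $\ell \ge \theta(\frg,\vec{\lambda}) + \tfrac12$ one has to be slightly careful with the half-integrality; tracking the exact string-length bound $m_i$ (the number of positive $\theta$-weight steps, which is at most $\lambda_i(H_\theta)$ and for which $\sum m_i \le \tfrac12 \sum \lambda_i(H_\theta) + (\text{correction})$) is the one place the argument needs care, and I expect the paper resolves it by the sharper bound that $C_x=0$ whenever $\ell+1$ exceeds the sum of the top $\theta$-weights.
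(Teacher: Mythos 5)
The skeleton of your argument is the right one — use the preceding lemma to describe the fiber as $\operatorname{coker}(C_x\to A)$, show that map vanishes for every $x$, and conclude that $\mathbb{V}_{\frg,\vec{\lambda},\ell}$ is the trivial quotient $\mathbb{A}_{\frg,\vec{\lambda}}$, hence has $c_1=0$. But the mechanism you propose for the key step is wrong, and the difficulty you flag at the end is a real gap, not a bookkeeping issue. You claim $T_x^{\ell+1}=0$, i.e.\ $C_x=0$. The nilpotency order of $e_\theta^{(i)}$ on $V_{\lambda_i}$ is governed by the longest $\theta$-string, which runs from $H_\theta$-eigenvalue $-\lambda_i(H_\theta)$ up to $+\lambda_i(H_\theta)$; so $(e_\theta^{(i)})^{m}\neq 0$ for all $m\le \lambda_i(H_\theta)$, and the multinomial expansion of $T_x^N$ has a nonzero term as long as $N\le \sum_i\lambda_i(H_\theta)$. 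The hypothesis $\ell>\theta(\frg,\vec{\lambda})$ only gives $\ell+1>\tfrac12\sum_i\lambda_i(H_\theta)$, which is off by a factor of $2$ from what you need. A concrete counterexample: $\frg=\sL_2$, $n=4$, all $\lambda_i=\omega_1$, $\ell=2>\theta=1$; then $T_x^3$ applied to the tensor product of the four lowest weight vectors is a nonzero sum of terms $z_iz_jz_k\, (v^+)_i\tensor (v^+)_j\tensor (v^+)_k\tensor (v^-)_l$, so $C_x\neq 0$. Your guessed repair ("$C_x=0$ whenever $\ell+1$ exceeds the sum of the top $\theta$-weights") is the same false statement.

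The correct argument, which is what the paper does, is that the \emph{image of $C_x$ in the coinvariants $A$} is zero even though $C_x$ itself need not be. Every weight vector of $W=V_{\lambda_1}\tensor\cdots\tensor V_{\lambda_n}$ has $H_\theta$-eigenvalue at least $-\sum_i\lambda_i(H_\theta)$, and applying $T_x^{\ell+1}$ raises the $H_\theta$-eigenvalue by $2(\ell+1)$. Hence every element of $C_x$ is a sum of $H_\theta$-eigenvectors of eigenvalue at least $2(\ell+1)-\sum_i\lambda_i(H_\theta)>0$. A weight vector $v$ with $H_\theta v=cv$, $c\neq 0$, satisfies $v=\tfrac1c H_\theta v\in\frg W$, so it dies in $A=W/\frg W$. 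Thus $C_x\to A$ is the zero map for all $x$, and your concluding paragraph (constant fiber $=A$, so the globally generated surjection from $\mathbb{A}_{\frg,\vec{\lambda}}$ is an isomorphism and $c_1=0$) then goes through as written. In your example above this is visible directly: the surviving terms of $T_x^3$ have $H_\theta$-weight $+2$ and indeed vanish in the coinvariants.
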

\begin{proof}
Assume $\ell>\theta(\frg,\vec{\lambda})$, or that $\sum\lambda_i(H_{\theta})< 2 (\ell+1)$. Writing elements of $V_{\lambda_i}$ as obtained from lowest weight vectors by application of the operators $e_{\alpha}$, we see that any element of $C_x$ is a sum of eigenvectors  for $H_{\theta}$ with  strictly positive eigenvalues, and hence  maps to zero under $C_x\to A$.
Therefore $C_x\to A$ is the zero map. This proves the lemma.
\end{proof}


\subsubsection{The fixed part of conformal blocks and vanishing of $\mathbb{D}_{\frg,\vec{\lambda},\ell}$}

 \begin{definition}\label{Fixed} Consider the ``fixed part'':
$$\mathbb{F}(\frg,\ell,\vec{\lambda})=\bigcap_{x\in \operatorname{M}_{0,n}} \mathbb{V}_{\frg,\vec{\lambda},\ell}\mid_x^* \ \subseteq A^*=\mathbb{A}_{\frg,\vec{\lambda}}|_x^*.$$
\end{definition}
The intersection in Definition \ref{Fixed} is the same as if we were to intersect over all points $x\in \overline{\operatorname{M}}_{0,n}$. We also characterize $\mathbb{F}(\frg,\ell,\vec{\lambda})$ as the space of global sections (over $\overline{\operatorname{M}}_{0,n}$) of $\mathbb{V}_{\frg,\vec{\lambda},\ell}^*$: $\mathbb{F}(\frg,\ell,\vec{\lambda})=H^0( \overline{\operatorname{M}}_{0,n},\mathbb{V}_{\frg,\vec{\lambda},\ell}^*)$.
\begin{definition}
Let $C'\subseteq V_{\lambda_1}\tensor\dots\tensor V_{\lambda_n}$ be the $\Bbb{C}$-linear span of elements of the form
\begin{equation}\label{expresso}
e_{\theta}^{a_1}v_1\tensor\dots\tensor e_{\theta}^{a_{n}}v_n, \ v_i\in V_{\lambda_i},\ 0\leq a_i\leq \ell+1,\ \sum_{i=1}^n a_i =\ell+1.
\end{equation}
\end{definition}
\begin{lemma}
\begin{enumerate}
\item $\mathbb{F}(\frg,\ell,\vec{\lambda})^*$ is the cokernel of the natural map $C'\to A$.
\item $c_1\mathbb{V}_{\frg,\vec{\lambda},\ell}=0$ if and only if $C'$ and $C_x$ have the same image in $A$. $C_x$ was defined in the beginning of this section,  $C_x\subseteq C'$.
\end{enumerate}
\end{lemma}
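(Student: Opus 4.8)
The plan is to reduce both parts of the lemma to a single identity of subspaces of $W=V_{\lambda_1}\tensor\dots\tensor V_{\lambda_n}$, namely
$$\sum_{x\in\operatorname{M}_{0,n}}C_x=C'.$$
Granting this identity, part (1) follows by taking annihilators in $A^*$, and part (2) follows by combining it with the fact (recalled earlier in the paper, after \cite{TUY}) that $\mathbb{V}_{\frg,\vec{\lambda},\ell}$ is globally generated.

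\emph{Step 1 (the identity $\sum_x C_x=C'$).} First I would prove $C_x\subseteq C'$ — which also yields the parenthetical assertion $C_x\subseteq C'$ in (2) — by expanding $T_x^{\ell+1}=(\sum_i z_ie^{(i)}_{\theta})^{\ell+1}$ by the multinomial theorem, legitimate since the operators $e^{(i)}_{\theta}$ act on distinct tensor factors and hence commute; applied to a pure tensor $v_1\tensor\dots\tensor v_n$ this produces a $\Bbb{C}$-linear combination of the spanning vectors $e_{\theta}^{a_1}v_1\tensor\dots\tensor e_{\theta}^{a_n}v_n$ ($\sum a_i=\ell+1$) of $C'$. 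For the reverse inclusion I would fix a pure tensor $v=v_1\tensor\dots\tensor v_n$ and regard $z=(z_1,\dots,z_n)\mapsto T_z^{\ell+1}(v)$ as a $W$-valued homogeneous polynomial of degree $\ell+1$ on $\Bbb{A}^n$, whose coefficient vectors are nonzero scalar multiples of the $e_{\theta}^{a_1}v_1\tensor\dots\tensor e_{\theta}^{a_n}v_n$. A linear functional on $W$ annihilating $T_z^{\ell+1}(v)$ for all $z$ in the Zariski-dense open set of tuples with pairwise distinct coordinates gives a polynomial vanishing on that set, hence identically zero, so the span of the values $T_z^{\ell+1}(v)$ over such $z$ equals the span of the coefficient vectors. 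Since each such value lies in $C_x$ for the corresponding $x\in\operatorname{M}_{0,n}$, and the vectors $e_{\theta}^{a_1}v_1\tensor\dots\tensor e_{\theta}^{a_n}v_n$ span $C'$ as $v$ ranges over pure tensors, this gives $C'\subseteq\sum_x C_x$, hence equality.

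\emph{Step 2 (deducing (1) and (2)).} By the Lemma of \cite{Beauville,FSV2} quoted above, $\mathbb{V}_{\frg,\vec{\lambda},\ell}|_x=\operatorname{coker}(C_x\to A)$, so $\mathbb{V}_{\frg,\vec{\lambda},\ell}^*|_x=\operatorname{Ann}_{A^*}(\bar C_x)$, where $\bar C_x\subseteq A$ is the image of $C_x$. Intersecting over $x$ and using that annihilators convert sums of subspaces into intersections, $\mathbb{F}(\frg,\ell,\vec{\lambda})=\operatorname{Ann}_{A^*}(\sum_x\bar C_x)=\operatorname{Ann}_{A^*}(\bar C')$ by Step 1, where $\bar C'$ is the image of $C'$ in $A$; dualizing in the finite-dimensional space $A$ gives $\mathbb{F}(\frg,\ell,\vec{\lambda})^*\cong A/\bar C'=\operatorname{coker}(C'\to A)$, which is (1). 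For (2): since $\mathbb{V}_{\frg,\vec{\lambda},\ell}$ is locally free, $\dim\bar C_x$ is constant in $x$, so (given $\bar C_x\subseteq\bar C'$) the condition ``$C_x$ and $C'$ have the same image in $A$'' holds for one $x$ iff it holds for all $x$, iff $\bar C_x=\bar C'$ for every $x$. If this holds, then $\mathbb{V}_{\frg,\vec{\lambda},\ell}^*|_x=\operatorname{Ann}_{A^*}(\bar C')=\mathbb{F}(\frg,\ell,\vec{\lambda})$ is independent of $x$; as $\mathbb{V}_{\frg,\vec{\lambda},\ell}^*$ is a subbundle of the trivial bundle $A^*\times\overline{\operatorname{M}}_{0,n}$ with all fibers equal to this fixed subspace, it is the trivial subbundle with that fiber, so $c_1\mathbb{V}_{\frg,\vec{\lambda},\ell}=-c_1\mathbb{V}_{\frg,\vec{\lambda},\ell}^*=0$. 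Conversely, if $c_1\mathbb{V}_{\frg,\vec{\lambda},\ell}=0$, then $\phi_{\Bbb{D}}^*\mathcal{O}_{\Bbb{P}}(1)=0$ with $\mathcal{O}_{\Bbb{P}}(1)$ ample forces $\phi_{\Bbb{D}}$, hence $f_{\mathbb{V}}$, to be constant (a nonconstant map would restrict nonconstantly to some curve, against which an ample class pairs strictly positively); thus $\mathbb{V}_{\frg,\vec{\lambda},\ell}$ and $\mathbb{V}_{\frg,\vec{\lambda},\ell}^*$ are trivial, $\mathbb{V}_{\frg,\vec{\lambda},\ell}^*|_x$ is a fixed subspace of $A^*$, necessarily $\mathbb{F}(\frg,\ell,\vec{\lambda})=\bigcap_y\mathbb{V}_{\frg,\vec{\lambda},\ell}^*|_y$, and taking annihilators back in $A$ gives $\bar C_x=\operatorname{Ann}_A(\mathbb{F}(\frg,\ell,\vec{\lambda}))=\bar C'$ for all $x$.

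\emph{Main obstacle.} Essentially all the content sits in Step 1, the Vandermonde/interpolation argument that $\sum_x C_x$ exhausts $C'$; the rest is formal linear algebra together with the already-available facts that $\mathbb{V}_{\frg,\vec{\lambda},\ell}$ is globally generated and that its fibers are cokernels of $C_x\to A$. The only points needing care are that the configurations with pairwise distinct coordinates are Zariski dense in $\Bbb{A}^n$, and that the cokernel description of the fiber is invoked uniformly over all such points.
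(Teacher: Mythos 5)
Your proposal is correct and takes essentially the same route as the paper: the paper's proof of (1) is exactly your Step 1 in dual form (a functional $\alpha\in A^*$ kills every $T_x^{\ell+1}(v_1\tensor\dots\tensor v_n)$ iff the resulting polynomial in $z_1,\dots,z_n$ vanishes on the dense set of distinct tuples, iff all its multinomial coefficients --- which span $C'$ --- are killed). Your Step 2 merely fills in the details of (2), which the paper dismisses as ``easy to see'' but argues elsewhere by precisely your mechanisms: constancy of the classifying map $f_{\mathbb{V}}$ when the images $\bar C_x$ are constant, and the ampleness of $\mathcal{O}(1)$ forcing a map with $c_1=0$ to be constant, as in Lemma \ref{chernvanish} and the proof of Corollary \ref{metric}.
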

\begin{proof}
An element $\alpha\in A^*$ is in the fixed part, if and only if $\alpha(T_x^{\ell+1}(v_1\tensor\dots\tensor v_n))=0$ as a polynomial in $z_1,\dots,z_n$, where $v_i$ are arbitrary elements of $V_{\lambda_i}$. This polynomial is zero if  all its coefficients are zero.  So $\alpha\in \mathbb{F}(\frg,\ell,\vec{\lambda})$ if and only if $\alpha(C')=0$ as desired.
This gives (1). It is easy to see that (2) follows from (1).
\end{proof}

\begin{corollary}\label{metric}
Suppose $\frg=\sL_2$, and $\ell$ the critical level for $\vec{\lambda}$. Suppose $\tilde{\ell}\leq \ell$ and $\vec{\lambda}$ is in ${P}_{\tilde{\ell}}(\sL_{2})$.
\begin{enumerate}
\item $\mathbb{F}(\frg,\tilde{\ell},\vec{\lambda})=0$.
\item If  $\rk \mathbb{V}_{\frg,\tilde{\ell},\vec{\lambda}}\neq 0$, then $c_1\mathbb{V}_{\frg,\tilde{\ell},\vec{\lambda}}\neq 0$.
\end{enumerate}
\end{corollary}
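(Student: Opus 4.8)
The plan is to deduce both statements from a single claim: \emph{the natural map $C'\to A$ is surjective}, where $A={A}_{\sL_2,\vec\lambda}$ and $W=V_{\lambda_1}\tensor\dots\tensor V_{\lambda_n}$ — equivalently $W=C'+\frg\cdot W$. Statement (1) is then immediate, since the preceding Lemma identifies $\mathbb F(\frg,\tilde\ell,\vec\lambda)^*$ with the cokernel of $C'\to A$. For (2) I would argue by contradiction: if $\rk\mathbb V_{\sL_2,\vec\lambda,\tilde\ell}\ne 0$ but $c_1\mathbb V_{\sL_2,\vec\lambda,\tilde\ell}=0$, then part (2) of that same Lemma says $C_x$ and $C'$ have the same image in $A$ for every $x$; since that common image is all of $A$, the map $C_x\to A$ is onto, so by the Lemma computing the fiber, $\mathbb V_{\sL_2,\vec\lambda,\tilde\ell}|_x=\operatorname{coker}(C_x\to A)=0$ for all $x$, forcing $\rk\mathbb V_{\sL_2,\vec\lambda,\tilde\ell}=0$.

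To prove the claim I would work in the explicit $\sL_2$-model. Normalizing, write $\lambda_i=m_i\omega_1$ with $0\le m_i\le\tilde\ell$, pick a lowest weight vector $v_i^-\in V_{\lambda_i}$, and use that $e_\theta^{b}v_i^-$ for $0\le b\le m_i$ is, up to a nonzero scalar, the $H_\theta$-weight vector of weight $-m_i+2b$; hence the tensor monomials $e_\theta^{b_1}v_1^-\tensor\dots\tensor e_\theta^{b_n}v_n^-$ with $0\le b_i\le m_i$ form a weight basis of $W$, the one indexed by $(b_i)$ having weight $-\sum_i m_i+2\sum_i b_i$. The critical-level hypothesis $\ell=c(\sL_2,\vec\lambda)=-1+\tfrac12\sum_i|\lambda_i|$ rewrites as $\sum_i m_i=2(\ell+1)$, so this weight equals $2\bigl(\sum_i b_i-(\ell+1)\bigr)$; in particular the weight-zero subspace $W_0$ is spanned by the monomials with $\sum_i b_i=\ell+1$.

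The two ingredients are then: (a) $W_0\subseteq C'$, and (b) $W=W_0+\frg\cdot W$. For (a): given $(b_i)$ with $\sum_i b_i=\ell+1\ge\tilde\ell+1$ and each $b_i\le m_i\le\tilde\ell$, I would choose integers $0\le a_i\le b_i$ with $\sum_i a_i=\tilde\ell+1$ (possible, and then automatically $a_i\le\tilde\ell+1$) and write $e_\theta^{b_i}v_i^-=e_\theta^{a_i}(e_\theta^{b_i-a_i}v_i^-)$ with $e_\theta^{b_i-a_i}v_i^-\in V_{\lambda_i}$; this puts the monomial in the form \eqref{expresso}, so it lies in $C'$. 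For (b): for each nonzero $H_\theta$-weight $\mu$ one has $W_\mu=H_\theta W_\mu\subseteq\frg\cdot W$, whence $W=W_0\oplus\bigoplus_{\mu\ne 0}W_\mu\subseteq W_0+\frg\cdot W$. Combining, $C'+\frg\cdot W\supseteq W_0+\frg\cdot W=W$, which is the claim.

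The argument is short, so I do not expect a serious obstacle; the only real content is step (a), and it rests on the numerical coincidence — forced by $\sum_i m_i=2(\ell+1)$ together with $\tilde\ell\le\ell$ — that a weight-zero monomial has exponent sum $\ell+1\ge\tilde\ell+1$, matching the defining constraint on the generators of $C'$. The point that most needs care is bookkeeping: the objects $C_x$ and $C'$ in the cited Lemmas (stated at ``level $\ell$'') must be read at level $\tilde\ell$ here, and in the reduction of (2) the ``same image'' condition should be used for all $x$ (equivalently, for general $x$, since $\mathbb V_{\sL_2,\vec\lambda,\tilde\ell}$ is a vector bundle of constant rank).
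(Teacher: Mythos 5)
Your proof is correct and follows essentially the same route as the paper's: there, too, one shows that the weight-zero ($\frh$-invariant) subspace surjects onto the coinvariants and lies in $C'$ because a weight-zero monomial in lowest-weight vectors has exponent sum $\ell+1$ (the paper reduces to $\tilde{\ell}=\ell$ where you instead redistribute the exponents directly, which amounts to the same thing). Your deduction of (2) from (1) via part (2) of the preceding Lemma is equivalent to the paper's appeal to the non-constancy of the map $f_{\mathbb{V}}$ to the Grassmannian.
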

\begin{proof}
To prove (1) it suffices to consider the case $\tilde{\ell}=\ell$.
Let $C''$ be the set of $\frh$-invariants  of $V_{\lambda_1}\tensor\dots \tensor V_{\lambda_n}$. It is easy to see that $C''$ surjects on to $A$. We will
show $C''\subseteq C'$ and hence prove (1). A tensor $\gamma$ in $C''$ can be written as sum of vectors of  the form $e_{\theta}^{a_1}v_1\tensor\dots\tensor e_{\theta}^{a_{n}}v_n$ with $v_i$ lowest weight vectors. Since $\gamma$ is $\frh$-invariant,
we should have (in each term) $2\sum a_i -\sum\lambda_i=0$, so $\sum a_i=\ell+1$. Therefore $\gamma\in C'$. This gives (1). Now (2) follows from the lemma above and (1), since the map $f_{\Bbb{V}}$ from \eqref{mapG} is non-constant (if $X$ is a  positive dimensional projective variety, $f:X\to \Bbb{P}^m$ a non-constant morphism, then $f^*\mathcal{O}(1)$ is a non-trivial line bundle on $X$).
\end{proof}

\begin{lemma}\label{chernvanish}
Suppose $V$ is a globally generated vector bundle on a projective variety $X$. The following are equivalent:
\begin{enumerate}
\item[(1)] $V$ is a trivial vector bundle, i.e., isomorphic to $\mathcal{O}_X^{\oplus r}$, $r=\rk V$.
\item[(2)] The Chern character $\operatorname{ch}(V)$ of $V$ equals $\op{rk} V\in H^0(X,\Bbb{Q})\subseteq H^*(X,\Bbb{Q})$.
\item[(3)] $c_1(V)=0\in H^2(X,\Bbb{Q})$.
\end{enumerate}
\end{lemma}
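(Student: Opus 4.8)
The plan is to prove the chain of implications $(1)\Rightarrow(2)\Rightarrow(3)\Rightarrow(1)$, where the only substantive content is in $(3)\Rightarrow(1)$; the first two are formal. For $(1)\Rightarrow(2)$: if $V\cong\mathcal{O}_X^{\oplus r}$ then its total Chern class is trivial, so $\operatorname{ch}(V)=r$. For $(2)\Rightarrow(3)$: the degree-$2$ component of $\operatorname{ch}(V)$ is exactly $c_1(V)$, so $\operatorname{ch}(V)=r$ forces $c_1(V)=0$. These are one-line observations and I would dispatch them quickly.

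**The main step $(3)\Rightarrow(1)$.** Here is where global generation is essential. Since $V$ is globally generated on the projective variety $X$, choosing finitely many generating sections gives a surjection $\mathcal{O}_X^{\oplus N}\twoheadrightarrow V$, equivalently a morphism $g\colon X\to \operatorname{Grass}^{quo}(r,N)$ to a Grassmannian of $r$-dimensional quotients of $\mathbb{C}^N$, with $V\cong g^*Q$ for $Q$ the universal quotient bundle. Composing with the Plücker embedding $p\colon \operatorname{Grass}^{quo}(r,N)\hookrightarrow\mathbb{P}^M$, we get $\phi=p\circ g\colon X\to\mathbb{P}^M$, and $\phi^*\mathcal{O}(1)=\det V$, so $c_1(\det V)=c_1(V)=0$ by hypothesis. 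A morphism to projective space pulling back $\mathcal{O}(1)$ to the trivial line bundle must be constant on each connected component (indeed $\phi^*\mathcal{O}(1)$ being trivial means the line bundle has a nowhere-vanishing section, forcing the image to lie in a single point, since the hyperplane sections would otherwise cut out a nonempty proper subscheme — more carefully, $h^0(X,\phi^*\mathcal{O}(1))$ and the base-point-free linear system defining $\phi$ together force the map to factor through $\operatorname{Spec}\mathbb{C}$). Hence $\phi$, and therefore $g$, is constant; so $V\cong g^*Q$ is the pullback of $Q$ along a constant map, i.e.\ a trivial bundle $\mathcal{O}_X^{\oplus r}$.

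**Where the obstacle lies.** The only delicate point is the assertion that a morphism $\phi\colon X\to\mathbb{P}^M$ from a projective variety with $\phi^*\mathcal{O}(1)$ trivial must be constant (on each connected component). I would justify this as follows: $\phi$ is given by a base-point-free sub-linear-system $V_0\subseteq H^0(X,\phi^*\mathcal{O}(1))$; if $\phi^*\mathcal{O}(1)\cong\mathcal{O}_X$ then $H^0(X,\phi^*\mathcal{O}(1))\cong H^0(X,\mathcal{O}_X)$, which for $X$ connected projective over $\mathbb{C}$ is just $\mathbb{C}$, so $V_0$ is at most one-dimensional and the map it defines lands in a point. If $X$ is not assumed connected one argues component by component; but in the intended application ($X=\overline{M}_{0,n}$) connectedness is automatic, and I would simply remark that for a non-constant morphism to $\mathbb{P}^M$ the pullback of $\mathcal{O}(1)$ is non-trivial — exactly the parenthetical already invoked in the proof of Corollary~\ref{metric}, so this is consistent with the paper's conventions. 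Everything else is bookkeeping with Chern characters, and I do not anticipate any difficulty there.
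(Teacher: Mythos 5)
Your overall strategy --- reduce everything to the classifying map $\phi\colon X\to\mathbb{P}^M$ obtained from global generation followed by the Pl\"ucker embedding, and use that a non-constant morphism to projective space pulls back $\mathcal{O}(1)$ nontrivially --- is the same as the paper's, and the implications $(1)\Rightarrow(2)\Rightarrow(3)$ are handled identically. The problem is in the key step $(3)\Rightarrow(1)$: you pass from the hypothesis $c_1(V)=0\in H^2(X,\mathbb{Q})$ directly to the assertion that $\phi^*\mathcal{O}(1)=\det V$ is \emph{the trivial line bundle}, and then run the $h^0(X,\mathcal{O}_X)=\mathbb{C}$ argument. That passage is not justified: vanishing of the first Chern class in rational cohomology does not make a line bundle trivial (consider a non-torsion degree-zero line bundle on an elliptic curve, or any nontrivial element of $\operatorname{Pic}^0$). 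So your proof has a gap exactly at the one point where the content lies.

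The gap is repairable, and the repair is where global generation must be invoked a second time: $\phi^*\mathcal{O}(1)$ is base-point free, so if it were nontrivial some section would have a nonempty zero locus $D$, a nonzero effective divisor, and pairing $[D]=c_1(V)$ with $H^{\dim X-1}$ for an ample class $H$ gives a positive number, contradicting $c_1(V)=0$ in $H^2(X,\mathbb{Q})$. The paper sidesteps the issue by arguing contrapositively: if $V$ is nontrivial then the classifying map $f$ is non-constant, and restricting to a smooth curve $g\colon C\to X$ with $f\circ g$ non-constant shows that $\mathcal{O}(1)$ pulls back to $C$ with positive degree, whence $c_1(V)\neq 0$ already as a class in $H^2(X,\mathbb{Q})$. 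Either fix works, but as written your argument conflates ``cohomologically trivial'' with ``trivial as a line bundle,'' and that is a genuine missing step rather than bookkeeping.
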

\begin{proof}
Note that the $H^0(X,\Bbb{Q})$ and $H^2(X,\Bbb{Q})$ components of  $\operatorname{ch}(V)$ are $\op{rk} V$ and $c_1(V)$ respectively. We therefore only need to show that (3) implies (1). Assume (1) fails. We may assume $X$ to be connected and positive dimensional. Let $A\tensor \mathcal{O}_X\to V$ be the surjection from a constant vector bundle to $V$. Let  $f: X\to \operatorname{Grass}^{quo}(\op{rk} V ,A)$ be the corresponding map to the Grassmann variety of quotients. Since $f$ is a non-constant morphism (otherwise $V$ would be trivial as a vector bundle), $c_1(V)=c_1(f^*\mathcal{O}(1))\neq 0\in H^2(X,\Bbb{Q}).$ (One may use a smooth projective curve $C$  and a map $g:C\to X$ such that $f\circ g$ is not constant to show that the degree of the pull back of $\mathcal{O}(1)$ to $C$ is non-zero.)
\end{proof}

\subsection{Non-vanishing criteria: Proof of Theorem \ref{mon2} and first implication of Theorem \ref{mon1}}\label{nonzero}\label{mon2proof}

In this section we prove Theorem \ref{mon2} which also gives the proof of the forward implication in statement of Theorem \ref{mon1} as a special case.   The proof of Theorem \ref{mon1} is completed next in Section \ref{Conversemon1}.

\bigskip

To state the result, we begin with a more general construction of auxiliary bundles.

\begin{definition}\label{Auxiliary}(More general auxiliary bundles) Given $\vec{\lambda}\in P_{\ell}(\sL_{r+1})^n,$ such that for each $i \in [n]$, $\lambda_i$ is normalized.   For each $i\in[n]$, choose a two element
subset $A_i=\{\alpha_i<\beta_i\}\subseteq [r+1]$. Consider associated conformal blocks bundles $\Bbb{V}_{\sL_{2},\vec{\mu},\ell}$ and $\Bbb{V}_{\sL_{r-1},\vec{\nu},\ell}$
where $\mu_i$ is the $2\times \ell$ Young diagram formed by the $\alpha_i$th and $\beta_i$th rows of $\lambda_i$, and $\nu_i$ is the $(r-1)\times\ell$ Young diagram formed by removing the $\alpha_i$th and $\beta_i$th rows of $\lambda_i$, $i\in [n]$. The $\mu_i$ and $\nu_i$ may not be normalized.
\end{definition}

\begin{theorem}\label{mon2} Given $\vec{\lambda}\in P_{\ell}(\sL_{r+1})^n,$ such that for each $i$, $\lambda_i$ is normalized. Suppose that:
\begin{enumerate}
\item[(a)] $\frac{1}{2}\sum_{i=1}^n |\mu_i|=\frac{1}{r-1}\sum_{i=1}^n |\nu_i|=\frac{1}{r+1}\sum_{i=1}^n |\lambda_i|=\delta\in\Bbb{Z}$
\item[(b)] Assume that $\ell$ is not greater than the critical level for $\vec{\mu}$ (one needs to normalize $\vec{\mu}$ to find the critical level), and
$\rk\Bbb{V}_{\sL_{2},\vec{\mu},\ell}\neq 0$.
\item[(c)] If $r>2$, then $\rk\Bbb{V}_{\sL_{r-1},\vec{\nu},\ell}\neq 0$ (so condition (c) is vacuous for $r=2$).
\end{enumerate}
Then $\mathbb{D}_{\sL_{r+1},\vec{\lambda},\ell} \ne 0$.
\end{theorem}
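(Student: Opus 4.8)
The plan is to show that $\mathbb{V}_{\sL_{r+1},\vec\lambda,\ell}$ is a \emph{non-trivial} vector bundle; since it is globally generated, Lemma \ref{chernvanish} then gives $\mathbb{D}_{\sL_{r+1},\vec\lambda,\ell}=c_1(\mathbb{V}_{\sL_{r+1},\vec\lambda,\ell})\ne 0$. I argue by contradiction: assume $\mathbb{D}_{\sL_{r+1},\vec\lambda,\ell}=0$, so that $\mathbb{V}_{\sL_{r+1},\vec\lambda,\ell}\cong\mathcal{O}^{\oplus N}$ with $N=\rk\mathbb{V}_{\sL_{r+1},\vec\lambda,\ell}$.

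The heart of the argument is the construction, on $\overline{\operatorname{M}}_{0,n}$, of a morphism of vector bundles
\[
\Psi\colon\ \mathbb{V}_{\sL_{2},\vec\mu,\ell}\otimes\mathbb{V}_{\sL_{r-1},\vec\nu,\ell}\ \longrightarrow\ \mathbb{V}_{\sL_{r+1},\vec\lambda,\ell}
\]
that is injective on a dense open set. The map comes from the ``Cartan component'' branching: for each $i$ the choice of rows $A_i=\{\alpha_i<\beta_i\}$ realizes $\sL_{2}\oplus\sL_{r-1}$ (together with a one-dimensional torus) inside $\sL_{r+1}$, and the irreducible $\sL_2\oplus\sL_{r-1}$-submodule of $V_{\lambda_i}$ generated by the highest weight vector is exactly $V_{\mu_i}\boxtimes V_{\nu_i}$, appearing with multiplicity one. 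Tensoring these inclusions over $i$ and passing to coinvariants produces $\Psi$ already at the level of the globally generating bundles $\mathbb{A}$, exactly as in the proof of Proposition \ref{additive}; the role of hypothesis (a) is that the torus charges at the $n$ points cancel (the first equality of (a) is $(r+1)\sum|\mu_i|=2\sum|\lambda_i|$, and the second equality is then forced by $|\lambda_i|=|\mu_i|+|\nu_i|$), so $\Psi$ descends. One then checks — and this is where the identification of conformal blocks with spaces of generalized theta functions on moduli of quasi-parabolic bundles \cite{pauly}, together with the quantum/Horn-type dimension estimates of \cite{b4}, enter — that $\Psi$ respects the defining quotients $\mathbb{A}\twoheadrightarrow\mathbb{V}$ (equivalently, carries the subspaces $C_x$ of Section \ref{Set} into one another) and is generically injective. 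Hypothesis (b) gives $\rk\mathbb{V}_{\sL_2,\vec\mu,\ell}>0$ and hypothesis (c) gives $\rk\mathbb{V}_{\sL_{r-1},\vec\nu,\ell}>0$ (vacuous for $r=2$, where the target factor is a trivial line bundle), so the source of $\Psi$ is a nonzero bundle.

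Granting $\Psi$, the conclusion is immediate. Since a map out of a vector bundle that is injective at the generic point is injective as a map of sheaves, $W:=\mathbb{V}_{\sL_2,\vec\mu,\ell}\otimes\mathbb{V}_{\sL_{r-1},\vec\nu,\ell}$ is a subsheaf of $\mathbb{V}_{\sL_{r+1},\vec\lambda,\ell}\cong\mathcal{O}^{\oplus N}$. Taking top exterior powers, $\det W$ maps nonzero into $\wedge^{\rk W}\mathcal{O}^{\oplus N}=\mathcal{O}^{\oplus\binom{N}{\rk W}}$, hence $\det W\hookrightarrow\mathcal{O}$ and $-c_1(W)=c_1((\det W)^{-1})$ is effective. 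On the other hand $W$ is globally generated (a tensor product of globally generated bundles), so $c_1(W)$ is base point free, hence nef and pseudoeffective. A pseudoeffective class whose negative is also pseudoeffective must vanish, so $c_1(W)=0$. But
\[
c_1(W)=\bigl(\rk\mathbb{V}_{\sL_{r-1},\vec\nu,\ell}\bigr)c_1(\mathbb{V}_{\sL_2,\vec\mu,\ell})+\bigl(\rk\mathbb{V}_{\sL_2,\vec\mu,\ell}\bigr)c_1(\mathbb{V}_{\sL_{r-1},\vec\nu,\ell})
\]
is a sum of two nef classes with positive coefficients, so $c_1(W)=0$ forces $\mathbb{D}_{\sL_2,\vec\mu,\ell}=c_1(\mathbb{V}_{\sL_2,\vec\mu,\ell})=0$. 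Since $\ell$ is at most the critical level for $\vec\mu$ and $\rk\mathbb{V}_{\sL_2,\vec\mu,\ell}\ne 0$, this contradicts Corollary \ref{metric}(2). Therefore $\mathbb{D}_{\sL_{r+1},\vec\lambda,\ell}\ne 0$.

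The main obstacle is the construction of $\Psi$ and, above all, its generic injectivity. The delicate point is that the sub-$\sL_2$ attached to the $i$-th point is the root $\sL_2$ for $\epsilon_{\alpha_i}-\epsilon_{\beta_i}$, which need not be the highest root of $\sL_{r+1}$ and varies with $i$, so the Beauville--Feigin--Stoyanevsky operators $T_x$ for $\sL_2$ and for $\sL_{r+1}$ are not literally comparable; one must instead pass to moduli of quasi-parabolic bundles, where $\sL_2\oplus\sL_{r-1}\subseteq\sL_{r+1}$ becomes the locus of bundles split as a rank-$2$ plus rank-$(r-1)$ direct sum with flags adapted to the $A_i$, $\Psi$ is restriction of theta sections to this sublocus, and its injectivity is governed by the quantum Horn inequalities. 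When all $A_i=\{1,r+1\}$ — the case needed for the forward implication of Theorem \ref{mon1} — the highest-root $\sL_2$ is used uniformly at every point, and the comparison of the $T_x$'s, hence the verification that $\Psi$ is compatible with the quotients defining the $\mathbb{V}$'s, becomes a direct representation-theoretic computation.
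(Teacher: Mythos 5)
Your overall architecture is the right one (build the branching map $\Psi$ from the auxiliary bundles into $\vr$, assume triviality, derive a contradiction from Corollary \ref{metric}), but there is a genuine gap at the step you yourself flag as the main obstacle: your endgame needs $\Psi$ to be \emph{generically injective}, and this is both unproved in your write-up and too strong to be true in general. Generic injectivity would force
$\rk\vr\ \ge\ \rk\Bbb{V}_{\sL_{2},\vec{\mu},\ell}\cdot\rk\Bbb{V}_{\sL_{r-1},\vec{\nu},\ell}$,
an inequality that does not follow from hypotheses (a)--(c) and has no reason to hold (the multiplicity-one Cartan component inclusions $V_{\mu_i}\tensor V_{\nu_i}\hookrightarrow V_{\lambda_i}$ induce maps on coinvariants and conformal blocks that routinely have large kernels; already the $\sL_2$ auxiliary factor alone can have rank bigger than the target). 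The paper establishes only that $\Psi$ is \emph{generically non-zero} --- by producing a single theta section $s_{(\mq,\mgg)}$ on $\parbun_{r+1}$ that does not vanish on the image of $\parbun_2\times\parbun_{r-1}$, via Lemma \ref{closedeye} --- and that is all that can be expected. Once injectivity is lost, your determinant argument ($\det W\hookrightarrow\mathcal{O}$, hence $-c_1(W)$ effective, hence $c_1(W)=0$) collapses, and with it the appeal to Corollary \ref{metric}(2).

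The paper's replacement for your endgame is worth internalizing, because it extracts the contradiction from mere generic non-vanishing. If $c_1(\vr)=0$ then $\vr$ is a trivial bundle (Lemma \ref{chernvanish}), so the dual image of $\vr|_x^*$ inside $A^*=A_{\sL_2,\vec{\mu}}^*\tensor A_{\sL_{r-1},\vec{\nu}}^*$ is a \emph{constant} subspace of $A^*$, non-zero for generic $x$ because $\Psi$ is generically non-zero. Since the surjection $\Bbb{A}\to\vr$ factors through $\mathbb{V}_{\sL_2,\vec{\mu},\ell}\tensor\Bbb{A}_{\sL_{r-1},\vec{\nu}}$, this constant non-zero subspace sits inside $\mathbb{V}_{\sL_2,\vec{\mu},\ell}|_x^*\tensor A_{\sL_{r-1},\vec{\nu}}^*$ for every $x$; the second tensor factor is constant, so intersecting over $x$ gives a non-zero subspace of $\bigl(\bigcap_x\mathbb{V}_{\sL_2,\vec{\mu},\ell}|_x^*\bigr)\tensor A_{\sL_{r-1},\vec{\nu}}^*$, contradicting the vanishing of the fixed part $\mathbb{F}(\sL_2,\ell,\vec{\mu})=0$, i.e.\ Corollary \ref{metric}(1) rather than \ref{metric}(2). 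If you want to keep your positivity-flavored conclusion, you would have to prove the rank inequality above first; as written, the proposal does not prove the theorem.
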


\subsection{Key Steps}
We first assume that $r-1\geq 2$, and we will later indicate the modifications required to treat the case $r=2$. We now outline the main steps:




\subsubsection{Step I: Maps of conformal blocks}
We note that  given the auxiliary bundles as described in Definition \ref{Auxiliary}, one can form the diagram
\begin{equation}\label{sibel2}
\xymatrix{
\Bbb{A}_{\sL_2,\vec{\mu}}\tensor \Bbb{A}_{\sL_{r-1},\vec{\nu}}\ar[rr]\ar[d] &  & \Bbb{A}_{\sL_{r+1},\vec{\lambda}}\ar[d]\\
 \mathbb{V}_{\sL_2,\vec{\mu},\ell}\tensor \mathbb{V}_{\sL_{r-1},\vec{\nu},\ell}\ar[rr]^-{\phi} & & \mathbb{V}_{\sL_{r+1},\vec{\lambda}}
}
\end{equation}


\subsubsection{Step II} Next we show, in Section  \ref{cherryblossom}, that $\phi$ is a generically non-zero map over $\overline{\operatorname{M}}_{0,n}$ (and thereby proving also that $\rk \mathbb{V}_{\sL_{r+1},\vec{\lambda}}\neq 0$).
This step uses the geometry of parabolic bundles over a point of ${\operatorname{M}}_{0,n}$.

\subsubsection{Step III} Finally we argue by contradiction, suppose $c_1\mathbb{V}_{\sL_{r+1},\vec{\lambda}}
=0$. Then  it will follow that  $\mathbb{V}_{\sL_{r+1},\vec{\lambda},\ell}$ is trivial as a vector bundle. The image of  $\vr\mid^*_x$ in $\Bbb{A}\mid^*_x=A^*$ with  $\Bbb{A}=\Bbb{A}_{\sL_2,\vec{\mu}}\tensor \Bbb{A}_{\sL_{r-1},\vec{\nu}}$ (the constant fibers of $\Bbb{A}$ are denoted by $A$) is a constant  non-zero subspace which lies inside the image of the dual of $\mathbb{V}_{\sL_2,\vec{\mu},\ell}\mid_x\tensor \Bbb{A}_{\sL_{r-1},\vec{\nu}}$ inside $A^*$. But, this
contradicts \ref{metric}, which implies that
$$\bigcap_{x\in {\operatorname{M}}_{0,n}}\mathbb{V}_{\sL_2,\vec{\mu},\ell}\mid^*_x=0\subseteq A_{\sL_2,\vec{\mu}}^*.$$
 Therefore $c_1\mathbb{V}_{\sL_{r+1},\vec{\lambda}}
\neq 0$, as desired.

\subsubsection{Some notation}\label{azure}
\begin{enumerate}
\item Let $\agr_{r+1}$ be the affine Grassmannian of rank $ r+1$-vector bundles  with trivialized determinants on $\pone$, and  trivialized outside of $p$. $\agr_{r+1}$ is identified with the (ind-variety) $\SLL_{r+1}(\Bbb{C}((z)))/\SLL_{r+1}(\Bbb{C}[[z]])$ where $z$ is a local coordinate at $p$.
\item A quasi-parabolic $\operatorname{SL}_{r+1}$ bundle on $\pone$ is a triple $(\mv,\mf,\gamma)$ where  $\mv$ is a vector bundle on $\pone$ of  rank $r+1$ and degree $0$ with a given trivialization $\gamma:\det\mv\leto{\sim}\mathcal{O}$, and  $\mf=(F^{p_1}_{\bull},\dots, F^{p_n}_{\bull})\in \Fl_S(\mv)$ is a collection of complete flag  on fibers over $p_1,\dots,p_n$ (see Definition \ref{classic}). Let $\parbun_{r+1}$ be the moduli stack parameterizing   quasi-parabolic $\operatorname{SL}_{r+1}$ vector bundles on $\pone$.

\end{enumerate}

\subsubsection{Weyl group translates of highest weight vectors}
Let $S=\Bbb{C}^{r+1}$ with basis vectors $\epsilon_1,\dots,\epsilon_{r+1}$ and dual basis $L_1,\dots,L_{r+1}$. Let $U=\Bbb{C}\epsilon_1\oplus \Bbb{C}\epsilon_2$ and $W= \Bbb{C}\epsilon_3\oplus\dots\oplus\Bbb{C}\epsilon_{r+1}$ so that one has an internal direct sum $S=U +W$. There is a natural map
$\GLL(U)\times\GLL(W)\to \GLL(S)$. Identify $\GLL(U)=\GLL(2), \GLL(W)=\GLL(r-1)$ and $\GLL(S)=\GLL(r+1)$ in the evident way. Let $\frh_U$, $\frh_W$ and $\frh_S$
be the Cartan algebras of $\sL_2$, $\sL_{r-1}$ and $\sL_{r+1}$ respectively. The Weyl group $S_{r+1}$ of  $\sL_{r+1}$ can be considered to be a subgroup of $\GLL(S)$ (as permutation matrices), and acts on $\frh_S$ and $\frh_S^*$: $\pi\in S_{r+1}$ acts as $\pi \epsilon_i=\epsilon_{\pi(i)}$ and $\pi\cdot L_i=L_{\pi(i)}$:
\begin{itemize}
\item  If $\lambda\in \frh_S^*$ then
$(\pi\cdot\lambda)(\epsilon_i)=\lambda(\epsilon_{\pi^{-1}(i)})$. Therefore $\pi L_i(\epsilon_j)=L_i(\epsilon_{\pi^{-1}(j)})=\delta_{i,\pi^{-1}(j)}$  and hence $\pi L_i=L_{\pi(i)}$.
\end{itemize}

Now let $V_{\lambda}$ be an irreducible representation of $\GLL_{r+1}$ with highest weight vector $v$, and highest weight $\lambda$.  Let $\pi\in S_{r+1}$.
\begin{lemma}
\begin{enumerate}
\item The vector $\pi v$ a is weight vector of weight $\pi\lambda$.
\item $\pi v$ is a highest weight vector of $\sL_2\oplus\sL_{r-1}$ if and only if
\begin{equation}\label{organized}
\pi^{-1}(1)<\pi^{-1}(2),\ \pi^{-1}(3)<\dots<\pi^{-1}(r+1)
\end{equation}
\end{enumerate}
\end{lemma}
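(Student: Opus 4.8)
The plan is to deduce both parts from the single fact that $S_{r+1}$, realized inside $\operatorname{GL}_{r+1}$ as permutation matrices, normalizes the diagonal torus and conjugates elementary matrices by the rule $\pi E_{ab}\pi^{-1}=E_{\pi(a)\pi(b)}$ (a one–line check on the basis $\epsilon_1,\dots,\epsilon_{r+1}$). Part (1) is then immediate: for $H\in\frh_S$ one has $H\cdot(\pi v)=\pi\cdot\bigl((\pi^{-1}H\pi)\cdot v\bigr)=\lambda(\pi^{-1}H\pi)\,(\pi v)$ since $\pi^{-1}H\pi\in\frh_S$ and $v$ has weight $\lambda$; thus $\pi v$ is a weight vector, and the functional $H\mapsto\lambda(\pi^{-1}H\pi)$ is exactly $\pi\lambda$ for the action $\pi L_i=L_{\pi(i)}$ recorded just before the lemma, because $\pi^{-1}\operatorname{diag}(h_1,\dots,h_{r+1})\pi=\operatorname{diag}(h_{\pi(1)},\dots,h_{\pi(r+1)})$.

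For part (2), the ``if'' direction, I would use that a weight vector is a highest weight vector for $\sL_2\oplus\sL_{r-1}=\sL(U)\oplus\sL(W)$ precisely when it is annihilated by the \emph{simple} positive root vectors of this Levi, namely $E_{12}$ together with $E_{34},E_{45},\dots,E_{r,r+1}$ (only $E_{12}$ when $r=2$). Conjugating as above gives $E_{ab}\cdot(\pi v)=\pi\cdot\bigl(E_{\pi^{-1}(a)\pi^{-1}(b)}\cdot v\bigr)$, and since $v$ is the $\operatorname{GL}_{r+1}$–highest weight vector we have $E_{cd}v=0$ whenever $c<d$. So if $\pi$ satisfies \eqref{organized}, then $\pi^{-1}(a)<\pi^{-1}(b)$ for each relevant pair $(a,b)\in\{(1,2)\}\cup\{(i,i+1): 3\le i\le r\}$, every such operator kills $\pi v$, and $\pi v$ is a highest weight vector for $\sL_2\oplus\sL_{r-1}$.

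For the ``only if'' direction I would argue that if $\pi v$ is a highest weight vector for the Levi, then its weight $\pi\lambda$ is the highest weight of the finite–dimensional irreducible Levi–submodule it generates, hence Levi–dominant; concretely the entries of $\pi\lambda$ must be weakly decreasing along the block $\{1,2\}$ and along the block $\{3,\dots,r+1\}$. As the entries of the Young diagram $\lambda$ are strictly decreasing, this can happen only when $\pi^{-1}$ is increasing on each of these blocks, which is exactly \eqref{organized}.

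The one place that needs care is this final implication: if $\lambda$ has equal rows the vector $\pi v$ is determined only up to a nonzero scalar by its weight $\pi\lambda$ (which is extremal, hence of multiplicity one in $V_\lambda$), so the precise statement should read ``$\pi$'' as a minimal–length representative of the coset $\pi\cdot\operatorname{Stab}_{S_{r+1}}(\lambda)$ — one then checks that the minimal representative of any coset with Levi–dominant image satisfies \eqref{organized}. In the applications (where one may take the relevant weights regular, or uses only the ``if'' direction together with the multiplicity–one fact) this causes no difficulty, so the main content is genuinely the two conjugation computations above.
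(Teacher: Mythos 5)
Your proof of part (1) and of the ``if'' direction of part (2) is exactly the paper's argument: conjugate $h$ and the elementary matrices $e_{ij}$ by $\pi$, and use that $e_{cd}v=0$ for $c<d$ (the paper checks this for all $e_{ij}$ with $\pi^{-1}(i)<\pi^{-1}(j)$, you reduce to the simple root vectors of the Levi; these are equivalent). The paper's proof in fact stops there and never addresses the ``only if'' direction, so your Levi-dominance argument is a genuine addition, and your caveat is well taken: for a positive root $\alpha=L_d-L_c$ with $\langle\lambda,\alpha^\vee\rangle=\lambda^{(d)}-\lambda^{(c)}=0$, the $\alpha$-string through $\lambda$ is trivial, so $e_{cd}v=0$ even when $c>d$; hence if $\lambda$ has repeated rows, $\pi v$ can be a highest weight vector for $\sL_2\oplus\sL_{r-1}$ without \eqref{organized} holding, and the ``only if'' statement is literally correct only after replacing $\pi$ by the minimal-length representative of the coset $\pi\cdot\operatorname{Stab}_{S_{r+1}}(\lambda)$, as you propose. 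Since the paper only uses the ``if'' direction (together with the identification of the Levi submodule generated by $\pi v$ as $V_\mu\tensor V_\nu$) in the subsequent geometric argument, this imprecision is harmless, but your version is the more careful statement.
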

\begin{proof}
If $h\in \frh_S$, $h(\pi v)=\pi (\pi^{-1}h\pi) v= \lambda(\pi^{-1} \cdot h )\pi v=(\pi\cdot \lambda)(h)\pi v$. Therefore $\pi v$ is a weight vector, of weight $\pi\lambda$.

Let $e_{ij}\in \sL_{r+1}$ take $\epsilon_j$ to $\epsilon_i$ and all others to zero. Then  $\pi^{-1}e_{ij}\pi\epsilon_{\pi^{-1}(i)}=e_{\pi^{-1}(j)}$ and so $\pi^{-1}e_{ij}\pi=e_{\pi^{-1}(i),\pi^{-1}(j)}$. Moreover, $e_{ij} \pi v =\pi (\pi^{-1}e_{ij}\pi)v=\pi e_{\pi^{-1}(i),\pi^{-1}(j)} v$ which is zero if $\pi^{-1}(i)<\pi^{-1}(j)$.
\end{proof}
Assuming \eqref{organized}, denote the corresponding irreducible representation of $\sL_2\oplus \sL_{r-1}$ by $V_{\mu}\tensor V_{\nu}$. The representation of $\sL_2$ corresponds to  $\pi^{-1}(1)$ and $\pi^{-1}(2)$ rows of $\lambda$ (which gives $\mu$), and the representation corresponding to $\sL_{r-1}$ corresponds to the remaining rows of $\lambda$ (which gives $\nu$): This is because $\pi\lambda(\epsilon_i)=\lambda(\epsilon_{\pi^{-1}(i)})$.


For the final steps we will use the description of conformal blocks by generalized theta functions. We refer the reader to  Appendix \ref{appendo} for more details and (some) standard notation on parabolic moduli stacks, and the geometric
realization of conformal blocks.

\subsubsection{Geometrization of branching}
Let $X_a=\SLL(a)/B_a$, where $B_a$ is a chosen Borel subgroup. There is a natural map
$\iota:X_2\times X_{r-1}\to X_{r+1}$ given by $(g,h)\mapsto gh\pi$. There is a natural map $X_{r+1}\to \Bbb{P}(V_{\lambda})$. The pull backs of $\mathcal{O}(1)$ to $X_2\times X_{r-1}$ and $X_{r+1}$ are
$\ml_{\mu}\boxtimes\ml_{\nu}$ and $\ml_{\lambda}$ respectively. Then (compatibly)
$H^0(X_2\times X_{r-1},\ml_{\mu}\boxtimes\ml_{\nu}) = (V_{\mu}\tensor V_{\nu})^*$
 and $H^0(X_r,\ml_{\lambda}) = V_{\lambda}^*.$

Note further that $\iota$ is the map $\Fl(U)\times\Fl(W)\to \Fl(S)$ given by $(F_{\bull}, G_{\bull})\mapsto H_{\bull}$ where $H_{\bull}$ is computed as
follows: $$H_a= F_m\oplus G_{k},\ m=\pi^{-1}\{1,2\}\cap[i],\ k=a-m.$$

\subsubsection{The final step}\label{cherryblossom}
Working over  $x=(\pone,p_1,\dots,p_n)\in {\operatorname{M}}_{0,n}$, we produce an element $\delta\in \mathbb{V}_{\sL_{r+1},\vec{\lambda}}^*\mid_x$ whose image via $\phi^*$ in $(\mathbb{V}_{\sL_2,\vec{\mu},\ell}\tensor \mathbb{V}_{\sL_{r-1},\vec{\nu},\ell})^*\mid_x$ is non-zero. Therefore $\phi$ is not the zero map.

Consider the maps of moduli stacks
$$\beta:\parbun_{2}\times\parbun_{r-1}\to \parbun_{r+1}$$
($\parbun_{\tr}$ are the moduli stacks from Section \ref{verlindesection} with $n$-marked points $p_1,\dots,p_n$). Here $\beta$ is the map that sends
$(\mv,\mf,\gamma)\times (\mw,\mgg,\gamma')\mapsto (\mv\oplus\mw,\mh,\gamma\wedge\gamma')$ where
$H^{p}_a= F^p_m\oplus G^p_{k}$ where $m$ is the number of elements in $\pi_i^{-1}\{1,2\}$ that are less than or equal to $a$, $k=a-m$.

Consider line bundles  $\mathcal{P}_2= \mathcal{P}( \sL_{2},{\ell},\vec{\mu})$ on $\parbun_2$, $\mathcal{P}_{r-1}= \mathcal{P}( \sL_{r-1},{\ell},\vec{\nu})$ on $\parbun_{r-1}$ and $\mathcal{P}_{r+1}= \mathcal{P}( \sL_{r+1},{\ell},\vec{\lambda})$ on $\parbun_{r+1}$.
\begin{lemma}
The map $\beta$ pulls back $\mathcal{P}_{r+1}$ to $\mathcal{P}_{2}\boxtimes \mathcal{P}_{r-1}$
and induces the dual of the map $\phi$ at the level of global sections.
\end{lemma}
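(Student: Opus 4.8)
The plan is to prove the lemma in two halves: first identify the line bundle pullback $\beta^*\mathcal{P}_{r+1}\cong \mathcal{P}_2\boxtimes\mathcal{P}_{r-1}$, and then match the induced map on global sections with $\phi^*$.

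\textbf{Pullback of the Pappas--Pauly--type line bundle.} First I would recall that the line bundle $\mathcal{P}(\sL_{\tilde r},\ell,\vec\lambda)$ on $\parbun_{\tilde r}$ is built as the $\ell$-th power of a determinant-of-cohomology line bundle twisted by line bundles $\mathcal{L}_{\lambda_i}$ pulled back along the evaluation maps $\parbun_{\tilde r}\to \prod_i \Fl(\mathcal{V}|_{p_i})$ (see Appendix \ref{appendo}). For the first factor, the determinant line: since $\beta$ sends $(\mathcal{V},\mathcal{F},\gamma)\times(\mathcal{W},\mathcal{G},\gamma')$ to $(\mathcal{V}\oplus\mathcal{W},\mathcal{H},\gamma\wedge\gamma')$, the Whitney/additivity property of the determinant of cohomology gives $\det R\Gamma(\mathcal{V}\oplus\mathcal{W})^{\otimes\ell}\cong \det R\Gamma(\mathcal{V})^{\otimes\ell}\boxtimes\det R\Gamma(\mathcal{W})^{\otimes\ell}$ (using that $\ell$ is the same for all three, which is hypothesis of the construction). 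For the parabolic twists, at each marked point $p_i$ the flag $H^{p_i}_\bullet$ is the ``shuffle'' of $F^{p_i}_\bullet$ and $G^{p_i}_\bullet$ dictated by $\pi_i$; this is exactly the embedding $\iota:X_2\times X_{r-1}\to X_{r+1}$, $(g,h)\mapsto gh\pi_i$, from the Geometrization-of-branching paragraph, and there we already recorded that $\iota^*\mathcal{O}(1)=\ml_\mu\boxtimes\ml_\nu$, equivalently that $\iota^*\mathcal{L}_{\lambda_i}=\mathcal{L}_{\mu_i}\boxtimes\mathcal{L}_{\nu_i}$. Combining these two pieces, and using condition (a) of Theorem \ref{mon2} ($\tfrac12\sum|\mu_i|=\tfrac{1}{r-1}\sum|\nu_i|=\tfrac{1}{r+1}\sum|\lambda_i|=\delta\in\Bbb Z$) to guarantee that the normalizations are compatible so the determinant twists match up integrally, yields $\beta^*\mathcal{P}_{r+1}\cong \mathcal{P}_2\boxtimes\mathcal{P}_{r-1}$.

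\textbf{The map on global sections.} Once the line bundle identification is in hand, $\beta$ induces a linear map $H^0(\parbun_{r+1},\mathcal{P}_{r+1})\to H^0(\parbun_2\times\parbun_{r-1},\mathcal{P}_2\boxtimes\mathcal{P}_{r-1})=H^0(\parbun_2,\mathcal{P}_2)\otimes H^0(\parbun_{r-1},\mathcal{P}_{r-1})$. By the geometric realization of conformal blocks (Appendix \ref{appendo}, following \cite{pauly}), these global section spaces are canonically the fibers $\mathbb{V}_{\sL_{r+1},\vec\lambda,\ell}^*|_x$, $\mathbb{V}_{\sL_2,\vec\mu,\ell}^*|_x$, $\mathbb{V}_{\sL_{r-1},\vec\nu,\ell}^*|_x$ over $x=(\pone,p_1,\dots,p_n)$. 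I would then check that this pullback map is the transpose of $\phi|_x$ by tracing through both constructions: $\phi$ of \eqref{sibel2} is the fiberwise map induced by the branching inclusion $\mathcal{H}_{\lambda_i}\hookrightarrow \mathcal{H}_{\mu_i,\ell}\otimes\mathcal{H}_{\nu_i,\ell}$ (restricting to $V_{\lambda_i}\hookrightarrow V_{\mu_i}\otimes V_{\nu_i}$, as in the $\sL_2\oplus\sL_{r-1}$ decomposition from the Weyl-translate lemma, with highest weight vector going to the product of highest weight vectors), while $\beta^*$ on sections is, Künneth-locally, the restriction of a section along the ``diagonal-type'' substack $\beta(\parbun_2\times\parbun_{r-1})$ — and on each marked-point flag factor this restriction is precisely $\iota^*:H^0(X_{r+1},\ml_\lambda)=V_\lambda^*\to (V_\mu\otimes V_\nu)^*=H^0(X_2\times X_{r-1},\ml_\mu\boxtimes\ml_\nu)$, which is dual to the branching inclusion. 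So the two maps agree up to the canonical identifications, giving the second assertion.

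\textbf{Main obstacle.} I expect the hard part to be the bookkeeping in the line bundle identification: making sure the determinant-of-cohomology twists and the parabolic ``theta weights'' $\mathcal{L}_{\lambda_i}$ are normalized consistently for all three groups so that $\beta^*\mathcal{P}_{r+1}$ is genuinely isomorphic to $\mathcal{P}_2\boxtimes\mathcal{P}_{r-1}$ (not merely up to a fractional twist) — this is where condition (a) and the normalization conventions for $\vec\mu,\vec\nu$ (which ``may not be normalized'') are essential, and it must be reconciled with the conventions fixed in Appendix \ref{appendo}. The compatibility of $\beta^*$ with $\phi^*$ on sections, by contrast, is essentially formal once one knows that both the Pauly-type uniformization isomorphism and the branching inclusion $V_\lambda\hookrightarrow V_\mu\otimes V_\nu$ are defined via highest weight vectors, so that the two diagrams of natural maps commute.
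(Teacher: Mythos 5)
Your argument is correct in substance but takes a more explicit route than the paper, whose entire proof of this lemma consists of the remark that $\beta$ is dominated by a map of uniformizations $(\agr_2\times \Fl(U)^n)\times(\agr_{r-1}\times \Fl(W)^n)\to \agr_{r+1}\times\Fl(S)^n$. The paper lifts everything to the affine Grassmannian, where the line bundles $\mathcal{P}$ and the identification of their global sections with duals of conformal blocks are both defined by descent; the compatibility of the pullback with $\phi^*$ is then the branching of integrable highest weight modules along the affine analogue of $\sL_2\oplus\sL_{r-1}\subseteq\sL_{r+1}$, and both assertions of the lemma fall out of a single diagram. You instead verify the line-bundle identity directly on the moduli stack, using additivity of the determinant of cohomology under $\mv\mapsto\mv\oplus\mw$ together with the flag-variety computation $\iota^*\ml_{\lambda_i}=\ml_{\mu_i}\boxtimes\ml_{\nu_i}$ for the non-normalized $\mu_i,\nu_i$, and then invoke Pauly's theorem \eqref{verlinde} separately for the statement about sections. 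This is a legitimate and arguably more self-contained proof of the first assertion; its cost is that the second assertion is no longer automatic: one must check that the isomorphism \eqref{verlinde} intertwines $\beta^*$ with $\phi^*$, which you dismiss as ``essentially formal.'' It is true, but it is precisely the content that the paper's uniformization argument is designed to make visible, so it deserves more than a parenthesis.

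Two smaller points. First, you have the branching map backwards: there is no natural inclusion $V_{\lambda_i}\hookrightarrow V_{\mu_i}\otimes V_{\nu_i}$ of $\sL_2\oplus\sL_{r-1}$-modules (that direction belongs to the level-additive diagram \eqref{sibel}, not to \eqref{sibel2}); the relevant map is $V_{\mu_i}\otimes V_{\nu_i}\hookrightarrow V_{\lambda_i}$, the summand generated by the Weyl translate $\pi v_i$ of the highest weight vector, whose dual is exactly the restriction map $\iota^*$ you use in the following clause. Your conclusion is therefore right, but the intermediate arrow is reversed. Second, condition (a) of Theorem \ref{mon2} is not what makes the line-bundle identity integral: $\beta^*\mathcal{P}_{r+1}\cong\mathcal{P}_2\boxtimes\mathcal{P}_{r-1}$ holds fiberwise with no numerical hypothesis (the possibly nonzero top rows of $\mu_i,\nu_i$ contribute powers of $\det\mv_{p_i}$ and $\det\mw_{p_i}$, which are trivialized); condition (a) is needed later, to make the degree $-D$ of the auxiliary bundle $\mq$ an integer in the section construction of Section \ref{openeye2}.
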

\begin{proof}
Introduce a new point $p\in \pone$. The map $\beta$ is dominated by a map
$$(\agr_2\times \Fl(U)^n) \times (\agr_{r-1}\times \Fl(W)^n)\to (\agr_{r+1}\times \Fl(S)^n).$$

\end{proof}

Therefore our final task can be restated in geometric terms as:
The map
\begin{equation}\label{openeye3}
H^0(\parbun_{r+1},\mathcal{P}_{r+1})\to H^0(\parbun_{2},\mathcal{P}_2)\tensor H^0(\parbun_{r-1},\mathcal{P}_{r-1})
\end{equation}
is non-zero. For this we need a geometric way of producing elements of these spaces. In Section \ref{openeye2}, we recall a way of construction sections:
Write $\ell-D= \frac{1}{r+1}\sum|\lambda_i|$. Consider an evenly split bundle ( see Section \ref{openeye2} for a definition) $\mq$ of degree $-D$ and rank $\ell$. For every $\mgg\in \Fl_S(\mq)$ a section $s_{(\mq,\mgg)}$ is produced in  $H^0(\parbun_{r+1},\mathcal{P}_{r+1})$ by a degeneracy locus construction.

 We will now show that if $\mgg\in \Fl_S(\mq)$ is generic, $s_{(\mq,\mgg)}\in H^0(\parbun_{r+1},\mathcal{P}_{r+1})$ maps to a non zero element under the map \eqref{openeye3}. So we need to show that $s_{(\mq,\mgg)}$ is non-zero on images of generic elements of the form  $(\mv,\mf,\gamma)\times (\mw,\mgg,\gamma')$ via $\beta$.

Suppose not, then we will find maps $\psi_1:\mv\to \mq$ and $\psi_2:\mw\to\mq$, such that the resulting map $\mv\oplus\mw\to\mq$ is non-zero and
$$(\psi_1)_p(F^p_a)\subseteq G^p_{\ell-\mu_i^{(a)}}, (\psi_1)_p(F^p_b)\subseteq G^p_{\ell-\mu_i^{(b)}},\ a\in[2],\  b\in[r-1], p=p_i\in S.$$
But  there are no such non-zero maps by Lemma \ref{closedeye} applied to the non-zero vector spaces from conditions (b) and (c) of the theorem. Here we note that if we fix a $(\mv,\mf)$ in \eqref{openeye}, the zeroness holds for generic $(\mq,\mgg)$.
\begin{remark}
At the point $p=p_i$, $F^p_a$ maps to $H^p_{\alpha_a}$ therefore the requirement is that it map to $G^p_{\ell-\lambda_i^{(\alpha_a)}}$ but
$\lambda_i^{(\alpha_a)}=\mu_i^{(a)}$; similarly for $\nu$.
\end{remark}

\subsubsection{Case $r=2$}
 We just omit the $\sL_{r-1}$ factor. The transversality statement boils down to the following: Let $L$ be a one dimensional complex vector space. Then there are no non-zero maps $\psi$ such that for all
$p=p_i$, $i=1,\dots,n$,
$$\psi:L\tensor\mathcal{O}\to \mathcal{Q},\  \psi_p(L_p)\subseteq G^p_{\ell-\mu_i^{(1)}}.$$

 One can prove this by converting the above transversality assertion into the non-zeroness of a generalized Gromov-Witten number (using an argument of the type used in  Proposition \ref{closedeye}), the fact that the small quantum cohomology ring of a projective space is simply governed by degree constraints, and the shifting operations from \cite{b4}. Here we sketch a more direct argument: If $\psi_p$ are all non-zero, then the above follows from Kleiman's transversality. If some $\psi_p$ are zero, say for $p_1,\dots,p_{m}$ then $\psi$ gives rise to a map $L(\sum_{i\leq m} p_i)\to \mq$, we may apply Kleiman's transversality and find the expected dimension to be negative.

\subsection{Proof of the reverse implication in Theorem \ref{mon1}}\label{Conversemon1}

\begin{proof}(of the reverse implication in Theorem \ref{mon1})
Note that $\pi_i$ are the same permutation $\pi$ here. Let $\frg'=\sL_{\theta}\oplus\sL_{r-1}$ be the $\pi^{-1}$ conjugate embedding of
$\sL_2\oplus \sL_{r-1}$. Let $v_i \in V_{\lambda_i}$ be the highest weight vectors. Break up each $V_{\lambda_i}$ into a direct sum $M_i\bigoplus \oplus_{j\in I_i}W^{i}_j$ of irreducible modules for $\frg'$ where
$M_i$ is the irreducible module with highest weight vector $v_i$. It is easy to see there are no eigenvectors for $h_{\theta}$ with weight
$\lambda_i^{(1)}$ in any of the $W^{i}_{j}$ (because they will involve at least one application of $f_{\alpha_1}$ or $f_{\alpha_r}$, which lower the $h_{\theta}$ weight).

Therefore under the quotient
$T^{\ell+1}_{x}: V_{\lambda_1}\tensor\dots\tensor V_{\lambda_n}\to A_{\sL_{r+1},\vec{\lambda}}$,  the only term that survives
is the image of  $T^{\ell+1}_{x} M$ with $M=M_1\tensor \dots \tensor M_n$. But the image of the coinvariants $M_{\frg'}$ in $\Bbb{V}_{\sL_{r+1},\vec{\lambda},\ell}$ is zero since it factors through $\Bbb{V}_{\sL_{2},\vec{\mu},\ell}\tensor\Bbb{V}_{\sL_{r-1},\vec{\nu},\ell}=0$, from \eqref{sibel2}.  This implies  that the image of $T^{\ell+1}_{x} M$ in $A_{\sL_{r+1},\vec{\lambda}}$ is equal to the image of $M_{\frg'}$, which is constant.

Therefore, $\Bbb{V}_{\sL_{r+1},\vec{\lambda},\ell}$ is a constant quotient of $A_{\sL_{r+1},\vec{\lambda}}$, and has zero first Chern class.

\end{proof}

\section{The family of divisors $\mathbb{D}=\mathbb{D}_{\sL_{r+1},\omega_1^n,\ell}$, $n=(r+1)(\ell+1)$}\label{HassettFamily}

In this section we consider the set of divisors $\mathbb{D}=\mathbb{D}_{\sL_{r+1},\omega_1^n,\ell}$, for $n=(r+1)(\ell+1)$, each of which is $\operatorname{S}_n$-invariant.   In particular, by \cite{KeelMcKernan,GibneyCompositio}
the morphisms $\phi_{\mathbb{D}}$ given by these nef and big divisors are birational.  Propositions 5.3 and 5.4 in  \cite{BGMA} together give a list of $\operatorname{F}$-curves contracted by the divisors $\mathbb{D}_{\sL_{r+1},\omega_1^{n},\ell}$.  By Corollary \ref{Complete},  for  $\mathcal{A}=(\frac{1}{\ell+r},\ldots,\frac{1}{\ell+r})$,  $\ell>1$, and $r>1$,
 the maps $\rho_{\mathcal{A}}$ and $\phi_{\mathbb{D}}$ are shown to contract the same $\operatorname{F}$-curves.   According to the $\op{S}_n$-invariant $\operatorname{F}$-conjecture, the divisors $\mathbb{D}$ and $\rho_{\mathcal{A}}^*(A)$, where $A$ is any ample divisor on $\overline{\operatorname{M}}_{0,\mathcal{A}}$ conjecturally lie on the same face of the nef cone of  $\overline{\operatorname{M}}_{0,n}$.  In particular,  the (normalization of the) image of the morphism $\phi_{\mathbb{D}}$ should be isomorphic to $\overline{\operatorname{M}}_{0,\mathcal{A}}$.

 In \cite{KiemMoon}, and \cite{HBThesis} it is shown that $\overline{\operatorname{M}}_{0,\mathcal{A}}$ can be constructed as a GIT quotient of $\overline{\operatorname{M}}_{0,\mathcal{A}}(\mathbb{P}^1,1)$ by $\SLL(2)$.
The case $\ell=1$, the image of  $\phi_{\mathbb{D}}$ was shown in \cite{Fakh} to be isomorphic to $(\mathbb{P}^1)^n\git_{\mathcal{A}} \operatorname{SL}(2)$, where $a_i=1/(r+1)$.     In case $r=1$,  the image of  $\phi_{\mathbb{D}}$ was shown in \cite{Giansiracusa} to be isomorphic to $U_{\ell,n}\git_{(\delta,\mathcal{A})} \operatorname{SL}(\ell+1)$, where $\delta=\frac{\ell-1}{\ell+1}$, and $a_i=\frac{1}{\ell+1}$ (see \cite{gjms} for this particular notation).

\smallskip
We begin by defining an F-Curve.

\begin{definition}\label{FCurve}Fix a partition of $[n]=\{1,\ldots,n\}$ into four nonempty sets $N_1$, $N_2$, $N_3$, $N_4=[n]\setminus (N_1 \cup N_2 \cup N_3)$, and consider the
 morphism
$$\overline{\operatorname{M}}_{0,4} \longrightarrow \overline{\operatorname{M}}_{0,n},   \ \ \ \ (C, (a_1,a_2,a_3,a_4))\mapsto (X, (p_1,\ldots,p_n))$$
where $X$ is the nodal curve obtained as follows.  If $|N_i| \ge 2$, then one glues a copy of $\mathbb{P}^1$ to the spine $(C, (a_1,a_2,a_3,a_4))$ by attaching a point
$$(\mathbb{P}^1, \{p_j : j \in N_i\} \cup \{\alpha_i\} ) \in M_{0,|N_i|+1}$$
 to $a_i$ at $\alpha_i$.  If $|N_i|=1$, one does not glue any curve at the point $a_i$, but instead labels $a_i$ by $p_i$.    We refer to any element of the numerical equivalence class of the image of this morphism  the $\operatorname{F}$-Curve $F(N_1,N_2,N_3)$ or by $F(N_1,N_2,N_3,N_4)$, depending on the context.
\end{definition}

\begin{proposition}\label{ExactNonzeroResult}\label{e}
 Suppose that $r\geq 1$ and $\ell\geq1$.  For $n=(r+1)(\ell+1)$,  the divisor $\mathbb{D}=\mathbb{D}_{\sL_{r+1},\omega_1^{n},\ell}$,  positively intersects all $\operatorname{F}$-curves $F(N_1,N_2,N_3,N_4)$ with $i \in \{1,2,3,4\}$, $|N_i|=n_i$, where
$n_1\le n_2 \le n_3 \le n_4=(r+1)(\ell+1)-\sum_{1\le i \le 3}n_i$, and $\sum_{i=1}^3n_i \ge r+\ell+1$.
\end{proposition}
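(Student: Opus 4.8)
The plan is to use Fakhruddin's explicit formula for the intersection of a conformal blocks divisor with an F-curve, combined with the quantum cohomology/Witten dictionary that governs the rank of the relevant conformal blocks bundles. Recall that for $\mathbb{D}_{\mathfrak{g},\vec{\lambda},\ell}$ the intersection number with $F(N_1,N_2,N_3,N_4)$ is a sum, over compatible $4$-tuples of weights $(\mu_1,\mu_2,\mu_3,\mu_4)$ attached to the four nodes, of products of ranks of three-pointed conformal blocks bundles for the four "leg" curves times the degree of $\mathbb{D}_{\mathfrak{g},(\mu_1,\mu_2,\mu_3,\mu_4),\ell}$ on $\overline{\operatorname{M}}_{0,4}$. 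Since $\mathbb{D} = \mathbb{D}_{\sL_{r+1},\omega_1^n,\ell}$ is the critical level divisor with $n=(r+1)(\ell+1)$, every weight here is a multiple of $\omega_1$ (after using the factorization rules for $\sL_{r+1}$ at level $\ell$, the attaching weight at a node where $n_i$ copies of $\omega_1$ come in is forced to be $a\,\omega_1$ with $a \le \min(n_i,\ell)$ and $a\equiv n_i \bmod (r+1)$ when normalized appropriately). First I would set up this bookkeeping carefully, reducing the F-curve intersection to a sum indexed by the four node-weights $a_i\omega_1$, $i=1,\dots,4$, subject to the balancing constraint $\sum a_i \equiv \sum n_i \pmod{r+1}$ coming from the $\operatorname{SL}$-determinant.

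The key point is then to show the sum is strictly positive, which reduces to exhibiting at least one admissible term that is positive, together with the fact that all terms are nonnegative (degrees of conformal blocks divisors on $\overline{\operatorname{M}}_{0,4}$ are nonnegative by global generation, and ranks are nonnegative). For the strictly positive term, I would use the hypothesis $\sum_{i=1}^3 n_i \ge r+\ell+1$: this is precisely the numerical inequality that, via the Witten dictionary translating ranks of $\sL_{r+1}$ conformal blocks at level $\ell$ with weights $a\omega_1$ into Gromov--Witten/quantum Pieri coefficients on $\operatorname{Gr}(r+1, r+1+\ell)$ (or the dual Grassmannian), guarantees that one can choose node-weights so that both (i) the three leg-ranks are $1$ (level-$\ell$ multiplicity-one statements for $\omega_1$-type weights reduce to quantum Pieri, which gives $0$ or $1$), and (ii) the resulting four-pointed divisor on $\overline{\operatorname{M}}_{0,4}$ has positive degree — again computed by Fakhruddin's $\overline{\operatorname{M}}_{0,4}$ formula, which for level-$\ell$ $\sL_{r+1}$ with weights $a_i\omega_1$ reduces to a quantity involving $\sum a_i$ and the critical level, positive exactly when the four weights are "genuinely quantum," i.e. $\sum a_i > \ell+ \text{(something)}$ forcing a nonzero $q$-degree. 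The condition $\sum_{i\le 3} n_i \ge r+\ell+1$ is designed to make room for exactly this. I would also invoke Propositions 5.3--5.4 of \cite{BGMA} and Theorem \ref{mon2} as needed to know the relevant ranks are nonzero, and to identify which F-curves are in the contracted locus (the complementary inequality $\sum_{i\le 3} n_i \le r+\ell$ or low $n_i$).

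The main obstacle I expect is (ii): showing that the chosen four-pointed $\overline{\operatorname{M}}_{0,4}$ divisor has \emph{strictly} positive degree, rather than merely nonnegative. This is where one genuinely needs the quantum cohomology input — a divisor $\mathbb{D}_{\sL_{r+1},(a_1\omega_1,\dots,a_4\omega_1),\ell}$ on $\overline{\operatorname{M}}_{0,4}$ is zero iff the associated bundle is trivial iff (via factorization, comparing the three factorization channels) the rank is constant across channels, and this in turn is a statement about equality of certain quantum structure constants; the inequality $\sum n_i \ge r+\ell+1$ must be leveraged to break this equality. I would handle this by a direct comparison of the three factorization limits using quantum Pieri on $\operatorname{Gr}(r+1,r+1+\ell)$, exactly as in Section \ref{odd}, showing the "$s$-channel" and "$t$-channel" ranks differ. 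A secondary technical point is making sure the sum over admissible node-weight tuples is genuinely a sum of \emph{nonnegative} terms with no cancellation — this is automatic since each summand is (product of three nonnegative integers)$\times$(nonnegative rational number), but one must confirm the combinatorial range of admissible tuples is nonempty under the stated inequalities, which is again a counting argument controlled by $n_1\le n_2\le n_3\le n_4$ and $\sum_{i\le 3} n_i \ge r+\ell+1$.
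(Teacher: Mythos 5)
There is a genuine gap. Your opening reduction is exactly the paper's first step: by Fakhruddin's formula, $\mathbb{D}\cdot F(N_1,N_2,N_3,N_4)$ is a sum over $\vec{\lambda}\in P_\ell(\sL_{r+1})^4$ of nonnegative terms $\deg\mathbb{V}_{\sL_{r+1},\vec{\lambda},\ell}\cdot\prod_i\rk\mathbb{V}_{\sL_{r+1},(\omega_1^{n_i},\lambda_i^*),\ell}$, so it suffices to exhibit one strictly positive term. But your bookkeeping rests on a false premise: the attaching weight at a node where $n_i$ copies of $\omega_1$ come in is \emph{not} forced to be a multiple of $\omega_1$ for $r\ge 2$ (already $\omega_1\star\omega_1$ contains $V_{\omega_2}$), so restricting the search for a positive term to tuples $(a_1\omega_1,\dots,a_4\omega_1)$ can miss every positive term. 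Concretely, take $r=\ell=2$, $n=9$, $(n_1,\dots,n_4)=(2,2,2,3)$, which satisfies $\sum_{i\le 3}n_i=6\ge r+\ell+1=5$. The congruence $|a_i\omega_1|\cdot r\equiv -n_i\pmod{r+1}$ together with $a_i\le\ell=2$ forces the unique admissible tuple $(2\omega_1,2\omega_1,2\omega_1,0)$, and $\mathbb{D}_{\sL_3,(2\omega_1,2\omega_1,2\omega_1,0),2}$ is pulled back from $\overline{\operatorname{M}}_{0,3}$, hence of degree zero. The positivity of this F-curve intersection comes entirely from terms whose $\lambda_i$ are not multiples of $\omega_1$.

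The paper avoids this by choosing $\lambda_i$ to be arbitrary Young diagrams in an $r\times\ell$ box with $|\lambda_i|=n_i$; this normalization makes each leg rank $\rk\mathbb{V}_{\sL_{r+1},(\omega_1^{n_i},\lambda_i^*),\ell}$ a \emph{classical} multiplicity, nonzero by Pieri, so the whole proposition reduces to Claim \ref{C}: finding such $\vec{\lambda}$ with $\deg\mathbb{V}_{\sL_{r+1},\vec{\lambda},\ell}>0$. That claim — which is exactly the "main obstacle" you flag but do not resolve — is proved by induction on $r$: Lemma \ref{abracadabra} splits $m_i=n_i+q_i$ with $q_i\le\ell$, a row of size $q_i$ is appended to each inductively constructed $\lambda_i$, and nonvanishing of the degree is supplied by Theorem \ref{mon2} (the parabolic-bundle/theta-function criterion, with auxiliary $\sL_2$ and $\sL_{r-1}$ bundles of nonzero rank via Proposition \ref{Phish}), not by comparing factorization channels with quantum Pieri. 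To repair your argument you would need both to enlarge the set of node weights to general Young diagrams and to supply an actual mechanism for the strict positivity of the $\overline{\operatorname{M}}_{0,4}$ degree; the latter is the substantive content of the paper's proof.
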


\begin{proof}

Our proof carries
a larger induction hypothesis,  and we prove a stronger statement for these cases.

We want to show that any $F$-curve $F(N_1,N_2,N_3,N_4)$, $|N_i|=n_i$ with $n_i\leq (r+1)(\ell+1)-(r+\ell+1)=r\ell,i=1,\dots,4$ is not contracted by $\mathbb{D}$ (so we drop the hypothesis that $n_1\leq n_2\leq n_3\leq n_4$).   By  \cite{Fakh},
$$\mathbb{D} \cdot F(N_1,N_2,N_3,N_4)= \sum_{\vec{\lambda}=(\lambda_1,\ldots,\lambda_4)\in P_{\ell}^4}\operatorname{deg}\mathbb{V}_{\vec{\lambda}} \ \Pi_{i=1}^4
\operatorname{rk}\mathbb{V}_{\sL_{r+1},(\omega_1^{n_i}, \lambda_i^{*}),\ell},$$
where  $\mathbb{V}_{\vec{\lambda}}=\mathbb{V}_{\sL_{r+1},\vec{\lambda}}$.  This is a sum of nonnegative numbers.  Therefore, to show that the sum is nonzero, it is enough to show that there is at least one element
$\vec{\lambda}=(\lambda_1,\ldots,\lambda_4)\in P_{\ell}^4$ for which
$$\operatorname{deg}\mathbb{V}_{\vec{\lambda}} \ \Pi_{i=1}^4
\operatorname{rk}\mathbb{V}_{\sL_{r+1},(\omega_1^{n_i}, \lambda_i^{*}),\ell} >0.$$

We note that if $\lambda_i$ are normalized dominant integral weights  for $\sL_{r+1}$ in  $P_{\ell}(\sL_{r+1})$ (so they fit into boxes of size  $r\times\ell$) with $|\lambda_i|=n_i$, then
$\rk\mathbb{V}_{\sL_{r+1},(\omega_1^{n_i}, \lambda_i^{*}),\ell} >0$, since this classical, and we may use the Pieri rule. Therefore it suffices to establish the following claim:
\end{proof}

\begin{clm}\label{C}
Suppose $(n_1,n_2,n_3,n_4)\in[r\ell]^4$. Then there are Young diagrams $\lambda_i$, $i=1,\dots,4$  fitting into boxes of size  $r\times \ell$, so that $|\lambda_i|=n_i$, and
$$\deg \mathbb{V}_{\sL_{r+1}, (\lambda_1,\lambda_2,\lambda_3,\lambda_4),\ell}>0.$$
\end{clm}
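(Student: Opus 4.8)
The goal is to produce, for any quadruple $(n_1,n_2,n_3,n_4)\in[r\ell]^4$, Young diagrams $\lambda_i$ inside an $r\times\ell$ box with $|\lambda_i|=n_i$ such that $\deg\mathbb{V}_{\sL_{r+1},(\lambda_1,\lambda_2,\lambda_3,\lambda_4),\ell}>0$. The natural strategy is to pick the $\lambda_i$ so that the associated $\sL_2$-auxiliary bundle is itself nonzero with positive degree, then invoke Theorem \ref{mon2} on $\ovop{M}_{0,4}$ (where $n=4$ is allowed). So first I would try to reduce to the $\sL_2$ case directly: choose the two-element subsets $A_i=\{1,r+1\}\subseteq[r+1]$ for each $i$, so that $\mu_i$ is the $2\times\ell$ diagram formed by the first and last rows of $\lambda_i$, and $\nu_i$ is formed from the middle $r-1$ rows. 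One must then arrange simultaneously that (a) the balancing condition $\tfrac12\sum|\mu_i|=\tfrac1{r-1}\sum|\nu_i|=\tfrac1{r+1}\sum|\lambda_i|=\delta\in\mathbb{Z}$ holds, (b) $\ell$ is at most the critical level for $\vec\mu$ and $\rk\mathbb{V}_{\sL_2,\vec\mu,\ell}\ne0$, and (c) $\rk\mathbb{V}_{\sL_{r-1},\vec\nu,\ell}\ne0$.

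**Constructing the diagrams.** The key combinatorial step is to choose the diagrams so that each $\lambda_i$ has a prescribed number $a_i$ of boxes in its first row and $b_i$ boxes in its last ($(r+1)$th, i.e.\ after normalization the $r$th) row — equivalently the $\sL_2$-weight $\mu_i$ is the diagram $(a_i,b_i)$ — with the remaining $n_i-a_i-b_i$ boxes packed greedily into the middle $r-1$ rows, row by row, each row having between $0$ and $\ell$ boxes and weakly decreasing. Such a packing is possible precisely because $n_i\le r\ell$ fits in the $r\times\ell$ box. For the $\sL_2$ factor one wants $\mu_i=(a_i,b_i)$ with $a_i\le\ell$; taking $b_i=0$ and $a_i$ as large as needed, and spreading $\sum a_i$ to be even and each $a_i\le\ell$, one can force $\ell$ to be exactly the critical level for $\vec\mu$ (i.e.\ $\sum a_i=2(\ell+1)$ or a sub-critical analogue) while keeping $\rk\mathbb{V}_{\sL_2,\vec\mu,\ell}=1$ by the Pieri/classical criterion — here I would lean on Corollary \ref{metric}, which guarantees $c_1\ne0$ once the rank is nonzero at a sub-critical level, together with the fact that for $\sL_2$ on $\ovop{M}_{0,4}$ rank-one classical coinvariants are easy to certify. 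For the $\sL_{r-1}$ factor, positivity of $\rk\mathbb{V}_{\sL_{r-1},\vec\nu,\ell}$ on $\ovop{M}_{0,4}$ is again classical (Pieri), since we are free to choose $\vec\nu$ to make $\tfrac1{r-1}\sum|\nu_i|$ an integer equal to $\delta$.

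**Assembling.** With such a choice, conditions (a), (b), (c) of Theorem \ref{mon2} are met with $n=4$, so $\mathbb{D}_{\sL_{r+1},(\lambda_1,\ldots,\lambda_4),\ell}\ne0$; and since $\ovop{M}_{0,4}\cong\mathbb{P}^1$, a nonzero divisor has positive degree, giving $\deg\mathbb{V}_{\sL_{r+1},(\lambda_1,\ldots,\lambda_4),\ell}>0$ as claimed. Alternatively, when $r$ is small (e.g.\ $r=1$, so there is no $\sL_{r-1}$ factor, or $r=2$, covered by the $r=2$ case of Theorem \ref{mon2}), the argument simplifies since only the $\sL_2$-data must be arranged.

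**Main obstacle.** The real work is the simultaneous bookkeeping: one must split each target $n_i\le r\ell$ as (first-row box count) + (last-row box count) + (middle box count) so that \emph{across all four $i$} the three sums $\sum(\text{first+last})$, $\sum(\text{middle})$, and $\sum n_i$ land in the correct residue classes to make $\delta$ an integer and to put $\ell$ at or below the critical level for $\vec\mu$ — all while each individual diagram stays inside the $r\times\ell$ box and the $\sL_2$- and $\sL_{r-1}$-ranks stay positive. The flexibility one has is large (each $n_i$ can be redistributed freely among rows), so I expect this is achievable by an explicit greedy construction with a short case analysis according to the residues of $n_1+n_2+n_3+n_4$ modulo $r+1$ and modulo $2(r-1)$; but verifying that the ``boundary'' quadruples (all $n_i$ near $r\ell$, or all near $1$) still admit a legal split, and that the resulting $\vec\mu$ does not overshoot its critical level, is where the delicate estimates lie.
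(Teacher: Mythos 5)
Your overall strategy --- realize the $\lambda_i$ so that Theorem \ref{mon2} applies on $\ovop{M}_{0,4}$ --- is the same as the paper's, but your execution has a genuine gap at exactly the point you defer: condition (c), the nonvanishing of $\rk\mathbb{V}_{\sL_{r-1},\vec{\nu},\ell}$. You assert this is ``again classical (Pieri),'' but Pieri only applies when one of the weights is a single row or column; your greedy packing of $n_i-a_i-b_i$ boxes into the middle $r-1$ rows produces four \emph{general} Young diagrams $\nu_i$, and whether $(\otimes_i V_{\nu_i})^{\sL_{r-1}}$ (let alone its level-$\ell$ fusion analogue) is nonzero is a Horn-type problem that is emphatically not guaranteed by the balancing condition $\tfrac{1}{r-1}\sum|\nu_i|=\delta\in\Bbb{Z}$. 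You cannot in general avoid this by making the $\nu_i$ single rows either, since that would force $n_i\le 2\ell$ while the claim must handle $n_i$ up to $r\ell$. The secondary issue you flag --- the residue bookkeeping --- is real but minor compared to this; note also that your $\mu_i=(a_i,b_i)$ must be the extreme rows of $\lambda_i$, which imposes $(r-1)b_i\le n_i-a_i-b_i\le(r-1)a_i$, another constraint your greedy packing does not address.

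The paper resolves both problems simultaneously by inducting on $r$ with a strengthened hypothesis: it assumes the claim for rank $r$ comes equipped with the full auxiliary data $(\vec{\lambda},\vec{\mu},\vec{\nu},A_i)$ satisfying (a)--(c), then for rank $r+1$ uses Lemma \ref{abracadabra} to split $m_i=n_i+q_i$ with $(n_i)\in[r\ell]^4$ and $(q_i)\in[0,\ell]^4$, $\sum q_i=\ell+1$, appends a row of size $q_i$ to each inductively-obtained $\lambda_i$, and invokes Proposition \ref{Phish} (which rests on the multiplicative eigenvalue/quantum Horn results of \cite{b4}) with $a=r-1$, $b=1$ to propagate the rank-nonvanishing of the new $\sL_r$ auxiliary bundle from that of the old $\sL_{r-1}$ one. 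That one-row-at-a-time mechanism is precisely the missing ingredient that certifies condition (c); without Proposition \ref{Phish} or an equivalent substitute, your direct construction does not close.
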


\begin{proof}(of Claim \ref{C})

We will do this by induction on $r$. The weights $\lambda_j$'s will be such Theorem \ref{mon2} is applicable. So in addition to $\vec{\lambda}$ we will have subsets $A_i=\{\alpha_i<\beta_i\}\subseteq [r+1]$, $i=1,\dots,4$ and  associated conformal blocks bundles $\Bbb{V}_{\sL_{2},\vec{\mu},\ell}$ and $\Bbb{V}_{\sL_{r-1},\vec{\nu},\ell}$. This data will be such that conditions (a), (b) and (c) of Theorem \ref{mon2} hold, with $\delta=\ell+1$.
For $r=1$, $\sum n_i =2(\ell+1)$ and $0<n_i\leq \ell$, so any choice of $\lambda_i$ will work (use Fakhruddin's result that
critical level $\sL_2$ conformal blocks divisors are non-zero).

Assume that the claim holds for $r$ and prove it for $r+1\geq 2$ as follows. Let $m_1,m_2,m_3,m_4\in [(r+1)\ell]$ be positive integers which sum to
$(r+2)(\ell+1)$.

 We get $(n_1, \dots,n_4)\in[r\ell]^4$ and $(q_1,\dots,q_4)\in[\ell]^4$ from $(m_1, \dots, m_4)$ by applying Lemma \ref{abracadabra} below. Apply the claim (with the stronger burden of induction) for $r$ with data $n_1,\dots,n_4$, and obtain the data $\vec{\lambda}$, $\vec{\mu}$, $\vec{\nu}$ etc. Now add on a row of size
$q_i$ to $\lambda_i$ and get a new Young diagram $\lambda'_i$ (and permute rows so that one gets a legitimate Young diagram).  The old $\mu_i$ corresponds to rows $(\alpha'_i<\beta_i')$ of $\lambda_i'$. The new $\lambda_i'$ satisfies our requirement by using Theorem \ref{mon2} and Proposition
\ref{Phish}. Note that Proposition \ref{Phish} is applied to the $n$-tuple of weights of $\sL_r$ obtained by adding  rows of sizes $q_i$ to $\nu_i$.
(so, $a= r-1$ and $b=1$).

\end{proof}

\begin{lemma}\label{abracadabra}
It is possible to write $m_i=n_i+q_i, i=1,\dots,4, m_i,q_i\in \Bbb{Z}$ with $0<n_i\leq r\ell $ and $0\leq q_i\leq \ell$  and $\sum n_i= (r+1)(\ell+1)$ (so $\sum q_i =\ell+1$).
\end{lemma}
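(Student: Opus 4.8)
The plan is to recast the lemma as a statement about splitting the integer $\ell+1$ into four integers constrained to prescribed intervals. For each $i$, the requirements $0<n_i\le r\ell$, $0\le q_i\le\ell$ together with $n_i=m_i-q_i$ are equivalent to
\[
q_i\in[c_i,d_i],\qquad c_i:=\max(0,\,m_i-r\ell),\quad d_i:=\min(\ell,\,m_i-1),
\]
so it suffices to produce integers $q_1,\dots,q_4$ with $q_i\in[c_i,d_i]$ and $\sum_i q_i=\ell+1$; then $n_i:=m_i-q_i$ does the job, and $\sum_i n_i=(r+2)(\ell+1)-(\ell+1)=(r+1)(\ell+1)$ comes for free. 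Throughout I use that the $m_i$ are positive integers with $m_i\le(r+1)\ell$ and $\sum_i m_i=(r+2)(\ell+1)$, and that $r,\ell\ge1$, hence $r\ell\ge1$.

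First I would check that each interval $[c_i,d_i]$ is nonempty: this amounts to the four elementary inequalities $0\le\ell$, $0\le m_i-1$, $m_i-r\ell\le\ell$, $m_i-r\ell\le m_i-1$, all immediate from $1\le m_i\le(r+1)\ell$ and $r\ell\ge1$. Given nonempty integer intervals, integers $q_i\in[c_i,d_i]$ with $\sum_i q_i=\ell+1$ exist precisely when $\sum_i c_i\le \ell+1\le\sum_i d_i$ (start from $q_i=c_i$ and raise non-maximal coordinates one unit at a time). So the lemma reduces to the two estimates $\sum_i c_i\le\ell+1$ and $\sum_i d_i\ge\ell+1$. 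For the first I would split on $t:=\#\{i:m_i>r\ell\}$: if $t\le1$ then $\sum_i c_i\le(r+1)\ell-r\ell=\ell$; if $t\ge2$, then using $m_i\ge1$ for the $4-t$ small indices and the value of $\sum_i m_i$ one gets $\sum_i c_i\le(r+2)(\ell+1)-(4-t)-t\,r\ell$, which, since $r\ell\ge1$, is largest at $t=2$, where it equals $(\ell+1)-(r-1)(\ell-1)\le\ell+1$. For the second I would split on $u:=\#\{i:m_i\le\ell\}$: if $u\le2$ then $\sum_i d_i\ge 2\ell\ge\ell+1$; if $u\ge3$, then at most one $m_i$ exceeds $\ell$, so bounding that one by $(r+1)\ell$ forces three of the $m_i$ to sum to at least $(r+2)(\ell+1)-(r+1)\ell=r+\ell+2$, and a short computation handling $u=3$ and $u=4$ separately yields $\sum_i d_i\ge\ell+1$ using $r,\ell\ge1$. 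Finally, setting $n_i=m_i-q_i$ and reading off the constraints from $q_i\in[c_i,d_i]$ and $\sum q_i=\ell+1$ completes the proof.

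The only step needing a little care is the case $u\ge3$ of the second estimate, where the crude bound is one short and one must actually feed in $m_i\le(r+1)\ell$ for the large coordinate; everything else is routine arithmetic together with the standard interval-filling argument, so I do not expect any genuine obstacle.
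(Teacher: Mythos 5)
Your proof is correct, and it takes a genuinely different route from the paper's. The paper argues by exhibiting explicit values of the $q_i$ in each of several cases organized by how many of the $m_i$ exceed $r\ell$ (all $\le r\ell$; exactly one, two, or three larger; all four larger being shown impossible). You instead reformulate the problem as an integer interval-feasibility question: $q_i$ must lie in $[c_i,d_i]$ with $c_i=\max(0,m_i-r\ell)$ and $d_i=\min(\ell,m_i-1)$, and such a choice with $\sum q_i=\ell+1$ exists if and only if $\sum c_i\le \ell+1\le\sum d_i$. This reduces the lemma to two clean inequalities, each verified by a short case split ($t=\#\{i:m_i>r\ell\}$ and $u=\#\{i:m_i\le\ell\}$), and I have checked the arithmetic: $\sum c_i\le (r+2)(\ell+1)-(4-t)-tr\ell$, maximized at $t=2$ where it equals $(\ell+1)-(r-1)(\ell-1)\le\ell+1$; and for $u=3$ one gets $\sum d_i\ge 2\ell+r-1\ge\ell+1$, for $u=4$ one gets $\sum d_i=(r+2)(\ell+1)-4\ge\ell+1$, both using $r,\ell\ge1$. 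Your approach buys uniformity and robustness: there is no need to treat the ``all four large'' case as a separate impossibility argument (it is absorbed into the monotone bound in $t$), and the endpoint criterion makes clear exactly where the hypotheses $m_i\le(r+1)\ell$ and $\sum m_i=(r+2)(\ell+1)$ are used. The paper's construction is more hands-on and produces the $q_i$ directly, which is marginally more explicit but requires more bookkeeping.
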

\begin{proof}First note that not more than two of the $m_i$ can be one since $(r+1)\ell +3< (r+2)(\ell+1)= r+ 2\ell +r\ell +2$.
\begin{itemize}
\item If $m_i$ are all $\leq r\ell$: Write $m_i=1+\delta_i$. Then $\sum\delta_i= (r+2)(\ell+1)-4\geq \ell+1$, since
$r+\ell+r\ell\geq 3$. There are at least two of the $m_i$ from which we may subtract, so we may restrict
$q_i$ to be $\leq \ell$.
\item If $m_1=r\ell +k, k>0$, $1\leq m_2,m_3,m_4\leq r\ell$, suppose $m_2>1$. Then, we may take  $q_1=\ell, q_2=1,q_3=0,q_4=0$.
\item If $m_1=r\ell+k_1$, $m_2=r\ell +k_2$ are $> r\ell$, and $1<m_3, m_4\leq r\ell$ (up to-reordering).
Then $k_1+k_2\leq (r+2)(\ell+1)-2r\ell-2 = r+2\ell-r\ell\leq \ell+1$ since  $r+\ell -r\ell -1= -(r-1)(\ell-1)\leq 0$. We take $q_1=k_1, q_2= (\ell+1)-k_1$ and $q_3=q_4=0$.
\item If $m_1=r\ell +k_1$, $m_2=r\ell +k_2$ and $m_3=r\ell +k_3$ are $> r\ell$ and $1<m_4\leq r\ell$. Then
$k_1+k_2+k_3\leq (r+2)(\ell+1)-3r\ell-1= [(r+2)(\ell+1) -2r\ell -2] +[1-r\ell]\leq \ell +1+0=\ell+1$. So we may set $q_1=k_1$, $q_2=k_2$, $ q_3=(\ell+1)-(k_1+k_2)$ and $q_4=0$.
\item If $m_i>r\ell$ for all $i$, then $4r\ell <(r+2)(\ell+1)= r\ell +2\ell +r +2$. Hence,
 $3r\ell-r-2\ell -r-2<0$  hence that $(r-1)(\ell-2) -4 +2r\ell<0$. If $\ell\geq 2$, then this cannot happen. If $\ell=1$, then
we get $3r-r-2-2<0$ or that $r<2$ and hence $r=\ell=1$. Writing $m_i= 1+\delta_i$, we see that $\sum \delta_i = 6-4=2$ but $\delta_i>0$ by assumption, so this case cannot happen.

\end{itemize}
\end{proof}

\subsection{The maps given by $\mathbb{D}=\mathbb{D}_{\sL_{r+1},\omega_1^n,\ell}$, $n=(r+1)(\ell+1)$}




\subsubsection{Hassett Contractions}\label{Hassett}
Consider
an $n$-tuple  $\mathcal{A}=\{a_1,\ldots,a_n\}$, with $a_i \in \mathbb{Q}$, $0 < a_i \le 1$, such that $\sum_i a_i >2$.   In \cite{HassettWeighted}, Hassett introduced moduli spaces $\overline{\operatorname{M}}_{0,\mathcal{A}}$, parameterizing families of stable weighted pointed rational curves $(C, (p_1,\ldots,p_n))$
such that
\begin{enumerate}
\item $C$ is nodal away from its marked points $p_i$;
\item $\sum_{j\in J} a_i \le 1$, if the marked points $\{p_j : j\in J\}$ coincide; and
\item If $C'$ is an irreducible component of $C$ then
$$\sum_{p_i \in C'} a_i + \mbox{ number of nodes on } C' > 2.$$
\end{enumerate}

These Hassett spaces  $\overline{\operatorname{M}}_{0,\mathcal{A}}$ receive birational morphisms $\rho_{\mathcal{A}}$ from
$\overline{\operatorname{M}}_{0,n}$ that are characterized entirely by which F-Curves (cf. Def. \ref{FCurve}) they contract.

\begin{definition/lemma}\label{rho}For any Hassett space $\overline{\operatorname{M}}_{0,\mathcal{A}}$, with  $\mathcal{A}=\{a_1,\ldots,a_n\}$, there are birational morphisms
$\rho_{\mathcal{A}}: \overline{\operatorname{M}}_{0,n} \longrightarrow  \overline{\operatorname{M}}_{0,\mathcal{A}}$,
which contract all  $\operatorname{F}$-curves $F(N_1,N_2,N_3,N_4)$ satisfying:
$$\sum_{i\in N_1 \cup N_2 \cup N_3} a_i \le 1,$$
and no others, where without loss of generality, the leg $N_4$ carries the most weight.
\end{definition/lemma}

\subsubsection{The maps given by $\mathbb{D}_{\sL_{r+1},\omega_1^n,\ell}$ contract same curves as Hassett Contractions}
Recall from \eqref{mapG} that conformal block divisors arise from suitable morphisms $\phi_{\Bbb{D}}$ from $\overline{\operatorname{M}}_{0,n}$ to projective spaces.

\begin{corollary}\label{Complete}
Suppose that $r>1$ and $\ell>1$. Put $n=(r+1)(\ell+1)$,  $\mathbb{D}=\mathbb{D}_{\sL_{r+1},\omega_1^{n},\ell}$, and $\mathcal{A}=(a_1,\ldots,a_n)$, with $a_i=\frac{1}{r+\ell}$.
Then  the maps $\phi_{\mathbb{D}}$ and $\rho_{\mathcal{A}}$  contract the same $\operatorname{F}$-curves.
\end{corollary}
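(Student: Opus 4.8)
\textbf{Proof plan for Corollary \ref{Complete}.}
The plan is to compare two explicit lists of contracted F-curves: the list produced by Proposition \ref{ExactNonzeroResult} together with Propositions 5.3 and 5.4 of \cite{BGMA} on one side, and the list in Definition/Lemma \ref{rho} on the other. Since $\mathbb{D}$ is $\operatorname{S}_n$-invariant, it suffices to work with unordered partitions: an F-curve $F(N_1,N_2,N_3,N_4)$ is determined up to numerical equivalence by the multiset $(n_1,n_2,n_3,n_4)$ of sizes, with $n_1\le n_2\le n_3\le n_4$ and $\sum n_i=n=(r+1)(\ell+1)$. For $\mathcal{A}=(\tfrac{1}{r+\ell},\dots,\tfrac{1}{r+\ell})$, the Hassett criterion $\sum_{i\in N_1\cup N_2\cup N_3}a_i\le 1$ becomes simply $n_1+n_2+n_3\le r+\ell$, i.e. $\rho_{\mathcal{A}}$ contracts $F(N_1,N_2,N_3,N_4)$ if and only if $n_1+n_2+n_3\le r+\ell$ (equivalently $n_4\ge r\ell + 1$). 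So the whole statement reduces to the numerical claim: $\mathbb{D}\cdot F(N_1,N_2,N_3,N_4)>0$ if and only if $n_1+n_2+n_3\ge r+\ell+1$.

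First I would establish the ``nonvanishing'' direction, which is exactly Proposition \ref{ExactNonzeroResult}: if $n_1+n_2+n_3\ge r+\ell+1$ (equivalently every $n_i\le r\ell$), then $\mathbb{D}\cdot F(N_1,N_2,N_3,N_4)>0$. This is already proved in the excerpt via Claim \ref{C} and Theorem \ref{mon2}, so here I only need to invoke it. Second, I would establish the ``vanishing'' direction: if $n_1+n_2+n_3\le r+\ell$, equivalently $n_4\ge r\ell+1$, then $\mathbb{D}\cdot F=0$. The natural tool is the factorization/propagation formula from \cite{Fakh} used in the proof of Proposition \ref{ExactNonzeroResult}:
\[
\mathbb{D}\cdot F(N_1,N_2,N_3,N_4)=\sum_{\vec{\lambda}\in P_\ell^4}\operatorname{deg}\mathbb{V}_{\sL_{r+1},\vec{\lambda},\ell}\ \prod_{i=1}^4\operatorname{rk}\mathbb{V}_{\sL_{r+1},(\omega_1^{n_i},\lambda_i^*),\ell}.
\]
For each leg $i$, $\operatorname{rk}\mathbb{V}_{\sL_{r+1},(\omega_1^{n_i},\lambda_i^*),\ell}$ is computed classically by the Pieri rule: it is nonzero only if $\lambda_i^*$ (equivalently $\lambda_i$, after normalization) is a Young diagram fitting in an $r\times\ell$ box with $|\lambda_i|\equiv n_i \pmod{r+1}$ and in fact $|\lambda_i|=n_i$ after normalization — so the only $\vec\lambda$ contributing satisfy $|\lambda_i|=n_i$ for the normalization. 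But $n_4\ge r\ell+1$ means no normalized weight for $\sL_{r+1}$ in $P_\ell$ can have $|\lambda_4|=n_4$: the largest is the full $r\times\ell$ box with $|\lambda|=r\ell$. Hence every term in the sum vanishes, and $\mathbb{D}\cdot F=0$. (Here one must be slightly careful that the propagation formula genuinely forces $|\lambda_i|$ to equal $n_i$ rather than $n_i$ minus a multiple of $r+1$; since $\omega_1^{n_i}$ has total box-size $n_i\le r\ell<(r+1)\ell$ on a single component, the Pieri/quantum-Pieri analysis pins this down, as in the discussion preceding Claim \ref{C}.)

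The main obstacle is the vanishing direction, specifically making the bookkeeping in the factorization formula airtight: one needs that the only weights $\lambda_i$ that can yield $\operatorname{rk}\mathbb{V}_{\sL_{r+1},(\omega_1^{n_i},\lambda_i^*),\ell}\ne 0$ are (normalizations of) Young diagrams of total size exactly $n_i$ inside an $r\times\ell$ box, which is what couples the four legs through the total-degree constraint $\sum n_i=(r+1)(\ell+1)$ and forces at least one $n_i>r\ell$ when $n_1+n_2+n_3\le r+\ell$. Once this combinatorial fact is in hand, both implications are immediate, and combining them with Definition/Lemma \ref{rho} gives that $\phi_{\mathbb{D}}$ and $\rho_{\mathcal{A}}$ contract exactly the same F-curves, completing the proof.
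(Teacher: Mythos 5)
Your overall structure is right: reduce to the numerical statement that $\mathbb{D}\cdot F>0$ iff $n_1+n_2+n_3\ge r+\ell+1$, get the nonvanishing half from Proposition \ref{ExactNonzeroResult}, and get the vanishing half from the factorization formula. The paper itself disposes of the vanishing half by citing Theorem 6.2 of \cite{BGMA}, whereas you try to reprove it, and your argument for it has a genuine gap. The claim that $\operatorname{rk}\mathbb{V}_{\sL_{r+1},(\omega_1^{n_i},\lambda_i^*),\ell}\neq 0$ forces the normalization of $\lambda_i$ to have $|\lambda_i|=n_i$ is false: Pieri only gives $|\lambda_i|\equiv n_i\pmod{r+1}$ with $|\lambda_i|\le n_i$, because normalizing a Young diagram with $r+1$ rows strips full columns. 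For example, with $r=1$, $\ell=2$, $n_i=3$ and $\lambda_i=\omega_1$ (so $|\lambda_i|=1=3-2$), the bundle $\mathbb{V}_{\sL_2,(\omega_1^3,\omega_1),2}$ has positive rank. Consequently the leg factors do \emph{not} all vanish when $n_4\ge r\ell+1$, and your conclusion that "every term in the sum vanishes" does not follow from the analysis of the legs; your own parenthetical worry about "$n_i$ minus a multiple of $r+1$" is exactly the point of failure, and the remark that "the Pieri/quantum-Pieri analysis pins this down" is not correct (and in any case cannot apply to the leg with $n_4>r\ell$).

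What actually kills the offending terms is the \emph{degree} factor on the central $\ovop{M}_{0,4}$ component, via critical-level vanishing from \cite{BGMA}: if some normalized $|\lambda_i|=n_i-k_i(r+1)$ with $\sum k_i=K\ge 1$, then $\sum_i|\lambda_i|=(r+1)(\ell+1-K)$, so the critical level of $(\sL_{r+1},\vec\lambda)$ is $\ell-K<\ell$ and $\operatorname{deg}\mathbb{V}_{\sL_{r+1},\vec{\lambda},\ell}=0$. This forces $|\lambda_i|=n_i$ exactly in every surviving term, and only then does $n_4\ge r\ell+1$ rule out all $\lambda_4\in P_\ell(\sL_{r+1})$. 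With that ingredient inserted (or, as the paper does, by simply quoting the vanishing statement of \cite{BGMA}), your argument closes up and the rest of the proposal — the translation of the Hassett criterion into $n_1+n_2+n_3\le r+\ell$ and the matching with Proposition \ref{ExactNonzeroResult} — is correct and is the same bookkeeping the paper performs.
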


\begin{proof}
This follows from  \cite{BGMA}, Theorem 6.2 and Proposition \ref{ExactNonzeroResult}.
\end{proof}

\subsubsection{Images of maps given by $\mathbb{D}_{\sL_{r+1},\omega_1^n,\ell}$}\label{magei}

Since the weights are symmetric the non-zero nef divisor $\mathbb{D}_{\sL_{r+1},\omega_1^n,\ell}$ is big, and so the corresponding morphism $\phi_{\mathbb{D}}: \overline{\operatorname{M}}_{0,n}\to \Bbb{P}$ is birational onto its image (\cite{KeelMcKernan,GibneyCompositio}).
By \cite{BGMA}, $\phi_{\mathbb{D}}$ factors via $\rho_{\mathcal{A}}$. Let $Z$ be the image of $\phi_{\mathbb{D}}$ and $\widetilde{Z}$ its normalization. We therefore find a
regular birational  morphism $p:\overline{\operatorname{M}}_{0,\mathcal{A}}\to \widetilde{Z}$. The $S_n$ invariant F-conjecture implies that $p$ is an isomorphism: In the lemma below the weights
$\mathcal{A}$ are arbitrary, and not necessarily symmetric.
\begin{lemma}\label{home}
Suppose $f:\overline{\operatorname{M}}_{0,n}\to \Bbb{P}^m$ is a morphism with image $Y$. Assume that $f$ contracts the same $F$-curves as the morphism
$\rho_{\mathcal{A}}: \overline{\operatorname{M}}_{0,n}\to \overline{\operatorname{M}}_{0,\mathcal{A}}$. Then, there is a morphism
$\pi:\overline{\operatorname{M}}_{0,\mathcal{A}}\to \widetilde{Y}$, where $\widetilde{Y}$ is the normalization of $Y$, which is an isomorphism if the $F$-conjecture holds (in particular, $f$ is birational on to its image).
\end{lemma}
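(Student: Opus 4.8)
The plan is to factor $f$ through the Hassett reduction morphism $\rho_{\mathcal A}$ and then recognize the factor map. Since $\rho_{\mathcal A}\colon\ovop{M}_{0,n}\to\ovop{M}_{0,\mathcal A}$ is a birational morphism of normal projective varieties it has connected fibres and $(\rho_{\mathcal A})_*\mathcal O_{\ovop{M}_{0,n}}=\mathcal O_{\ovop{M}_{0,\mathcal A}}$, so by the rigidity lemma it will be enough, in order to produce a morphism $f'\colon\ovop{M}_{0,\mathcal A}\to\mathbb P^m$ with $f=f'\circ\rho_{\mathcal A}$, to check that $f$ is constant on each fibre of $\rho_{\mathcal A}$. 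Equivalently, since each fibre is connected and every curve contained in it is contracted by $\rho_{\mathcal A}$, it suffices to check that $f$ contracts every curve that $\rho_{\mathcal A}$ contracts. Granting this, the image of $f'$ is $Y$ and, $\ovop{M}_{0,\mathcal A}$ being normal, $f'$ lifts through the normalization, producing the desired $\pi\colon\ovop{M}_{0,\mathcal A}\to\widetilde Y$.

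So the crux is to show $f^*\mathcal O(1)\cdot C=0$ for every curve $C$ contracted by $\rho_{\mathcal A}$. A curve is contracted by $\rho_{\mathcal A}$ precisely when its class lies in the subcone $\sigma\subseteq\overline{\operatorname{NE}}(\ovop{M}_{0,n})$ orthogonal to $\rho_{\mathcal A}^*A$ (for $A$ ample on $\ovop{M}_{0,\mathcal A}$), and by Definition/Lemma \ref{rho} the supporting hyperplane of this face is spanned by the contracted $F$-curves. Invoking the $F$-conjecture, $\overline{\operatorname{NE}}(\ovop{M}_{0,n})$ is generated by $F$-curves, so $\sigma$ is spanned by the $F$-curves it contains, that is, by the $F$-curves contracted by $\rho_{\mathcal A}$. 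The line bundle $f^*\mathcal O(1)$ is globally generated, hence nef, and by hypothesis it has degree zero on each such $F$-curve; therefore it is zero on all of $\sigma$, which is what we needed. This establishes the factorization and the existence of $\pi$.

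It remains to see that $\pi$ is an isomorphism. By hypothesis $f$ and $\rho_{\mathcal A}$ contract the same $F$-curves, and (assuming the $F$-conjecture) the $F$-curves generate $\overline{\operatorname{NE}}(\ovop{M}_{0,n})$; hence the nef, globally generated class $f^*\mathcal O(1)$ vanishes exactly on the face $\sigma$ and is strictly positive on every extremal ray outside it. Thus $f^*\mathcal O(1)$ lies in the relative interior of the face $\sigma^{\perp}\cap\operatorname{Nef}(\ovop{M}_{0,n})$, which equals $\rho_{\mathcal A}^*\operatorname{Nef}(\ovop{M}_{0,\mathcal A})$ since $\rho_{\mathcal A}$ is the contraction of the extremal face $\sigma$; consequently $f^*\mathcal O(1)=\rho_{\mathcal A}^*A'$ for some ample $A'$ on $\ovop{M}_{0,\mathcal A}$. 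Then, using the projection formula, $\widetilde Y=\operatorname{Proj}\bigoplus_{m\ge 0}H^0(\ovop{M}_{0,n},mf^*\mathcal O(1))=\operatorname{Proj}\bigoplus_{m\ge 0}H^0(\ovop{M}_{0,\mathcal A},mA')=\ovop{M}_{0,\mathcal A}$, and unwinding the identifications shows $\pi$ is an isomorphism; in particular $f^*\mathcal O(1)$ is big, so $f$ is birational onto $Y$.

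The main obstacle is the step identifying the cone of curves contracted by $\rho_{\mathcal A}$ with the cone on the $F$-curves it contracts: this is exactly where the $F$-conjecture enters, since a priori the contracted $F$-curves only span a subcone of the contracted curves, and the positive-dimensional fibres of $\rho_{\mathcal A}$ (which can be copies of $\ovop{M}_{0,k}$ with $k\ge 5$, and hence are not covered by $F$-curves) cannot be handled directly. The remaining ingredients — the rigidity lemma over a normal base, lifting through a normalization, the identification of the $\sigma$-contracted face of the nef cone with $\rho_{\mathcal A}^*\operatorname{Nef}(\ovop{M}_{0,\mathcal A})$, and the $\operatorname{Proj}$ computation of $\widetilde Y$ — are standard, and I would also note that in the applications the factorization $f=f'\circ\rho_{\mathcal A}$ is available independently from \cite{BGMA}.
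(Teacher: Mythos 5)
Your argument proves slightly less than the lemma asserts, and the shortfall is concentrated in one step: the existence of the factorization $f=f'\circ\rho_{\mathcal A}$. The lemma claims \emph{unconditionally} that the morphism $\pi$ exists; only its being an isomorphism is contingent on the $F$-conjecture. The paper gets the unconditional factorization by citing Proposition 4.6 of \cite{Fakh}, which shows directly --- with no hypothesis on the cone of curves --- that a nef divisor on $\ovop{M}_{0,n}$ of degree zero on the $F$-curves contracted by $\rho_{\mathcal A}$ descends to $\ovop{M}_{0,\mathcal A}$. You instead deduce that $f$ kills every $\rho_{\mathcal A}$-contracted curve from the claim that the face $\sigma\subseteq\overline{\operatorname{NE}}(\ovop{M}_{0,n})$ is spanned by the $F$-curves it contains, and, as you yourself flag, that is exactly where the $F$-conjecture enters (the positive-dimensional fibres of $\rho_{\mathcal A}$ are the obstruction). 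So in your write-up even the existence of $\pi$ becomes conditional. This is not cosmetic for the paper: in Section \ref{magei} and Remark \ref{remarkF} the regular morphism $p:\ovop{M}_{0,\mathcal A}\to\widetilde Z$ is produced first, unconditionally, and the $F$-conjecture is invoked only afterwards to upgrade it to an isomorphism. The fix is to replace your cone argument in the factorization step by the citation to \cite{Fakh} (the rigidity/normalization bookkeeping you do around it is fine), reserving the $F$-conjecture for the second half.

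For the isomorphism itself your route is genuinely different from the paper's and works, granting the $F$-conjecture. The paper argues that $\pi$ contracts no curve: a contracted curve $C\subseteq\ovop{M}_{0,\mathcal A}$ lifts to $C'$ upstairs with $f^*\mathcal O(1)\cdot C'=0$; writing $C'=\sum a_iF_i$ with $a_i>0$ and using nefness forces each $F_i$ to be $f$-contracted, hence $\rho_{\mathcal A}$-contracted, hence $C'$ is $\rho_{\mathcal A}$-contracted --- a contradiction; finiteness plus normality of $\widetilde Y$ then gives the isomorphism. Your version (descending $f^*\mathcal O(1)$ to an ample class $A'$ and computing $\widetilde Y$ as $\operatorname{Proj}$ of the section ring) uses the $F$-conjecture at the same point but is longer, and one step needs care: identifying $\operatorname{Proj}\bigoplus_{m}H^0\bigl(\ovop{M}_{0,n},mf^*\mathcal O(1)\bigr)$ with $\widetilde Y$ quietly assumes $f$ has degree one onto its image, since that $\operatorname{Proj}$ is the Stein factorization of $f$ and agrees with the normalization of $Y$ only once $K(Y)$ is known to be algebraically closed in $K(\ovop{M}_{0,n})$. (The paper's passage from ``$\pi$ is finite'' to ``$\pi$ is an isomorphism'' carries the same implicit degree-one step; in the applications it is supplied by the independent fact that $\phi_{\mathbb D}$ is birational for a big nef symmetric $\mathbb D$.)
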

\begin{proof}(Standard)
By Proposition 4.6 in \cite{Fakh}, $f$ factors through $\rho_{\mathcal{A}}$. There is therefore  a natural morphism $\overline{\operatorname{M}}_{0,\mathcal{A}}\to Y$, which factors through the normalization of $Y$, and hence a morphism $\pi$ as in the statement of the lemma. We need only show that $\pi$ does not contract any curves.
If $C$ is a contracted curve, we can write it as an image of a curve $C'$ in $\overline{\operatorname{M}}_{0,n}$ (this is possible because $\rho_{\mathcal{A}}$ is birational). Write $C'$
as a positive sum $\sum a_i F_i$ of $F$ curves, in the cone of curves, assuming the F-conjecture (if the weights are symmetric, by averaging we need only the $S_n$ invariant $F$-conjecture). Each $F_i$ is contracted by $f$,
and  is hence contracted by $\rho_{\mathcal{A}}$, this gives $C=0$ in the cone of curves. Therefore, $\pi$ is an isomorphism.
\end{proof}
\begin{remark}\label{remarkF}
It is therefore interesting to look for other (not necessarily symmetric) conformal block divisors $\Bbb{D}$ which contract the same curves as a suitable $\rho_{\mathcal{A}}$ (as in the main series of examples of this section). The F-conjecture implies, by Lemma \ref{home}, that such a $\Bbb{D}$ gives a birational morphism, with the normalization of image isomorphic to $\overline{\operatorname{M}}_{0,\mathcal{A}}$. Therefore strong vanishing and non-vanishing results (not the exact numerical values of classes) may lead to situations where the F-conjecture becomes applicable.
\end{remark}
\begin{remark}
The $\op{S}_n$-invariant $\op{F}$-Conjecture is known to hold for $n\le 24$ \cite{GibneyCompositio}, and so in this range one knows that the conformal blocks divisors $\Bbb{D}=\mathbb{D}_{\sL_{r+1}, \omega_1^n, \ell}$, for $n=(r+1)(\ell+1)$ give the maps from $\ovop{M}_{0,n}$ to the Hassett spaces $\ovop{M}_{0,\mathcal{A}}$.    In order to remove the dependence on the $\op{F}$-Conjecture, one could exhibit an ample divisor $D$ on $\overline{\operatorname{M}}_{0,\mathcal{A}}$ such that
the pull back of $D$ to $\overline{\operatorname{M}}_{0,n}$ is equal to a multiple of $\Bbb{D}$.  To do this one could show that the degree of the divisors $\mathbb{D}$ on a basis of $\op{F}$-Curves is equal to a multiple of the degree of the pullback of $D$ on those same curves.
In \cite{KiemMoon} and \cite{HBThesis}, it  has been shown that $\overline{\operatorname{M}}_{0,\mathcal{A}}$ can be constructed
as a GIT quotient by $\operatorname{SL}(2)$, and so canonical choices of such $D$ exist.
Moreover, in \cite[Proposition 5.6]{KiemMoon}, Kiem and Moon give a range of ample divisors on $\ovop{M}_{0,\mathcal{A}}$ and in \cite[Lemma 5.3]{KiemMoon}, they give the formulas for pulling those divisors back to
$\ovop{M}_{0,n}$.  One could try to solve for the parameter $\alpha$ in the given range which may determine the divisor $D$.  It would be enough to check degrees on those $\op{F}$-Curves of the form $F_{1,1,i,n-2-i}$ which form a basis for the $S_n$-invariant $1$-cycles.
\end{remark}

\section{The critical level divisors $\mathbb{D}_{\vec{\alpha}}=c_1\mathbb{V}_{\sL_{r+1}, \{\alpha_1\omega_1, \alpha_2 \omega_1, \ldots, \alpha_n \omega_1\}, \ell}$,  $n=2(r+1)$}\label{Kostka}
In this section, we show how one can  apply Theorem \ref{mon1} together with results from \cite{BGMA} to prove that divisors of the form
 $$\mathbb{D}_{\vec{\alpha}}=c_1(\mathbb{V}_{\sL_{r+1}, \{\alpha_1\omega_1, \alpha_2 \omega_1, \ldots, \alpha_n \omega_1\}, \ell}), \ \ \sum_{i=1}^n\alpha_i = (r+1)(\ell+1), \ \ n=2(r+1),\ \ell\geq 3$$
 are nonzero.

 \subsection{Nonvanishing} Since $\ell$ is the critical level for the pair $(r+1, \{\alpha_1 \omega_1, \ldots, \alpha_n \omega_1\})$, by \cite[Proposition 1.6 (b)]{BGMA} one has the critical level partner divisors:
$$\mathbb{D}_{\sL_{r+1}, (\alpha_1\omega_1, \alpha_2 \omega_1, \ldots, \alpha_n \omega_1), \ell}=\mathbb{D}_{\sL_{\ell+1}, \{\omega_{\alpha_1}, \omega_{\alpha_2} , \ldots, \omega_{\alpha_n}\},r},$$
the latter of which is at the theta level since
$$\theta(\sL_{\ell+1},  \{\omega_{\alpha_1}, \omega_{\alpha_2} , \ldots, \omega_{\alpha_n}\}) = -1 + \frac{n}{2}=-1+ \frac{1(r+1)}{2} = r.$$
In particular, we can apply Theorem \ref{mon1} to $\mathbb{D}_{\sL_{\ell+1}, \{\omega_{\alpha_1}, \omega_{\alpha_2} , \ldots, \omega_{\alpha_n}\},r},$ argue that the divisor is nonzero.
The associated auxiliary bundles are
 $$\mathbb{V}_{\sL_{2}, \omega_1^{2(r+1)}, r}, \mbox{ and } \mathbb{V}_{\sL_{\ell-1}, \{\omega_{\alpha_1-1},  \ldots, \omega_{\alpha_n-1}\},r}.$$
 In particular, by  Theorem \ref{mon1}, $\mathbb{D}_{\sL_{\ell+1}, \{\omega_{\alpha_1}, \omega_{\alpha_2} , \ldots, \omega_{\alpha_n}\},r}$ will be nonzero as long as both auxiliary bundles have positive rank.
The first bundle can be easily shown to have rank one using Witten's Dictionary.   Since $\sum_{i=1}^n (\alpha_i-1) = (r+1)(\ell-1)$ it follows from \cite{Fakh} that $\op{rk}\mathbb{V}_{\sL_{\ell-1}, \{\omega_{\alpha_1-1}, \omega_{\alpha_2-1} , \ldots, \omega_{\alpha_n-1}\},1}=1$ and hence $\op{rk}\mathbb{V}_{\sL_{\ell-1}, \{\omega_{\alpha_1-1}, \omega_{\alpha_2-1} , \ldots, \omega_{\alpha_n-1}\},r}\geq 1.$


\subsection{Associated maps}
By \cite[Proposition 5.3]{BGMA},  the $\mathbb{D}_{\vec{\alpha}}=\mathbb{D}_{\sL_{r+1}, (\alpha_1\omega_1, \alpha_2 \omega_1, \ldots, \alpha_n \omega_1), \ell}$ have zero intersection with all the F-Curves which are contracted by the Hassett maps $$\rho_{\mathcal{A}}: \ovop{M}_{0,n} \longrightarrow \ovop{M}_{0,\mathcal{A}}, \mbox{ where } \ \ \mathcal{A}=(a_1,\ldots,a_n), \mbox{  and  } a_i =\frac{\alpha_i}{r+\ell}.$$   In particular, the morphisms given by these divisors factor through $\rho_{\mathcal{A}}$.

As we'll see next in \ref{NotHassett}, there are choices of $\vec{\alpha}$ for which the divisors $\mathbb{D}_{\vec{\alpha}}$ contract more curves that the morphisms $\rho_{\mathcal{A}}$, and so while their associated maps factor through the Hassett reduction morphisms, their images are not isomorphic to $\ovop{M}_{0,\mathcal{A}}$.

\subsection{A particular example.}\label{NotHassett}  For a  simple example, consider $\mathbb{D}=c_1\mathbb{V}_{\sL_{3}, (3\omega_1)^6, 5}$.  Here $r=2$, $\ell=5$, and $n=6$.  Since $\mathbb{D}$ is a critical level divisor, we have that
$$c_1(\mathbb{V}_{\sL_{3}, (3\omega_1)^6, 5})=c_1(\mathbb{V}_{\sL_{6}, (\omega_3)^6, 2}).$$
And as above, the theta and critical levels coincide for the divisor $c_1(\mathbb{V}_{\sL_{6}, (\omega_3)^6, 2})$, and so we may apply Theorem \ref{mon1} to check it is nonzero.  The two auxiliary bundles are
$\mathbb{V}_{\sL_{2}, (\omega_1)^6, 2}$, which has rank one, and $\mathbb{V}_{\sL_{4}, (\omega_2)^6, 2}$, which has rank $11$, as is easily checked by a computation \cite{Swin}.    One also checks that
the class of the divisor is $2B_2+3B_3$, where $B_i$ is the sum of boundary classes of $\ovop{M}_{0,6}$ indexed by sets of size $i$.  The Hassett contraction
 $\rho_{\mathcal{A}}: \ovop{M}_{0,n} \longrightarrow \ovop{M}_{0,\mathcal{A}}$ where $\mathcal{A}=(a_1,\ldots,a_n)$, and $a_i =\frac{3}{7}$,
 will not contract any $F$-curves, on the other hand, it is easy to see that $2B_2+3B_3$ contracts the curve $F_{1,1,1,3}$.

 \subsection{The example $r=1$ and $\vec{\alpha}=(1,1,1,1)$.}\label{1} In case $r=1$,  and $\vec{\alpha}=(1,1,1,1)$,
we have the divisor $\mathbb{D}_{\sL_{2}, \omega_1^4, \ell}=2(\delta_{12}+\delta_{13}+\delta_{14})$ on $\ovop{M}_{0,4}$.  This
is also the first member of a related family of divisors $\mathbb{D}_{\sL_{2},\omega_1^n,\ell}$ on $\ovop{M}_{0,n}$, for $n=2k$ studied in \cite{gjms}.  Like our family, $\mathbb{D}_{\sL_{2},\omega_1^n,\ell}$ was shown to give rise to maps which factor through maps to Hassett spaces but contract more curves than the $\rho_{\mathcal{A}}$.

\section{Appendix}\label{appendo}
\subsection{Flag varieties and line bundles}\label{verlindesection}


\begin{defi}\label{classic}
Let $V$ be a vector space of rank $r+1$.
\begin{enumerate}
\item
A complete flag on $V$ is a filtration of $V$ by vector subspaces
$$F_{\bull}:0\subsetneq F_1\subsetneq F_2\subsetneq\dots\subsetneq F_{r+1}=V.$$
The space of complete flags on $V$ is denoted by $\Fl(V)$.
\item The determinant line $\wedge^{r+1} V$ is denoted by $\det V$.
\item For non-negative integers $\nu_1,\dots,\nu_{r+1}$, define a Young diagram $\lambda=(\lambda^{(1)},\dots,\lambda^{(r+1)})$ by
$$\lambda^{(a)}=\nu_a+\nu_{a+1}+\dots+\nu_{r+1}$$
and a line bundle $\ml_{\lambda}$ over $\Fl(V)$, whose fiber over $F_{\bull}$ is  $$\ml_{\lambda}(V,F_{\bull})=\ml_{\lambda}(F_{\bull})=(\det F_1)^{-\nu_1}\tensor\dots\tensor (\det F_{r+1})^{-\nu_{r+1}}.$$

\item   We fix a collection of $n$ distinct and ordered points $S=\{p_1,\dots,p_n\}\subseteq\pone$, and
for a vector bundle $\mw$ on $\pone$, define $\Fl_S(\mw)=\prod_{p\in \mpp}\Fl(\mw_p).$
If $\me\in \Fl_S(\mw)$, we will assume that it is written in the form
$\me=(E^p_{\bull}\mid p\in \mpp)$.
\end{enumerate}
\end{defi}

\subsection{Conformal blocks as generalized theta functions}

Associated to the data  $\vec{\lambda}=(\lambda_1,\dots,\lambda_n)\in P_{\ell}(\sL_{r+1})$, we can form a line bundle $\mathcal{P}(\sL_{r+1},\tilde{\ell},\vec{\lambda})$ on $\parbun_{r+1}$. The fiber over a point $(\mv, \mf,\gamma)$ is a tensor product
$$D(\mv)^{\tilde{\ell}}\tensor\tensor_{i=1}^n \ml_{\lambda_{i}}(\mv_{p_i},F^{p_i}_{\bull}),$$
where $D(\mv)$ is the determinant of cohomology of $\mv$ i.e., the line $\det H^1(\mathbb{P}^1, \mv)\tensor\det H^0(\mathbb{P}^1, \mv)^*$ and the lines $\ml(\mv_{p_i},F^{p_i}_{\bull})$ are as in Definition ~\ref{classic}.

It is known that the space of generalized theta functions is canonically identified (up-to scalars) with the dual of the space of conformal blocks (see the survey \cite{sorger}, and \cite{pauly}). Let $\xpp=(\pone,p_1,\dots,p_n)\in \operatorname{M}_{0,n}$.
\begin{equation}\label{verlinde}
 H^0(\parbun_{r+1},  \mathcal{P}(\sL_{r+1},\tilde{\ell},\vec{\lambda}))\leto{\sim} (\Bbb{V}_{\sL_{r+1},\vec{\lambda},\tilde{\ell}})_{\xpp}^*.
\end{equation}
\subsection{Non zero sections of conformal blocks bundles}\label{openeye2}A bundle $\mathcal{V}=\oplus_{i=1}^r\mathcal{O}_{\mathbb{P}^1}(a_i)$ on $\mathbb{P}^1$ is said to be evenly split if $|a_i-a_j|\leq 1$ for all $1\leq i,j\leq r$. Up to isomorphism, there is an unique evenly split bundle on $\mathbb{P}^1$ of a fixed degree.

Consider  $x=(\pone,p_1,\dots,p_n)\in \operatorname{M}_{0,n}$. The space
$\vr|_x^*$ is identified with $H^0(\parbun_{r+1},\mathcal{P}_{r+1})$ where $\mathcal{P}_{r+1}$ is the line bundle $\mathcal{P}( \sL_{r+1},{\ell},\vec{\lambda})$.
\begin{lemma}\label{closedeye}
The following are equivalent (see \cite{b4})
\begin{enumerate}
\item $\rk\vr\neq 0$.
\item There exist  vector bundles $\mv$ and $\mq$ of degrees $0$ and  $-D$ respectively, and ranks $r+1$ and rank $\ell$ respectively, and $\mf\in\Fl_S(\mv)$ and $\mgg\in \Fl_S(\mq)$ so that the vector space
\begin{equation}\label{openeye}
\{\phi_p\in\operatorname{Hom}(\mv,\mq)\mid \phi(F^p_a)\subseteq G^p_{\ell-\lambda_i^{(a)}}, p=p_i\in S, a\in[r+1]\}
\end{equation}
is zero.
\end{enumerate}
\end{lemma}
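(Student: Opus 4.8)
The plan is to derive the equivalence from the realization of conformal blocks as generalized theta functions together with the degeneracy-locus construction of sections from Section~\ref{openeye2}. Fix $x=(\pone,p_1,\dots,p_n)\in\operatorname{M}_{0,n}$; by the identification \eqref{verlinde}, $\rk\vr\neq 0$ if and only if the theta line bundle $\mathcal{P}_{r+1}=\mathcal{P}(\sL_{r+1},\ell,\vec{\lambda})$ on $\parbun_{r+1}$ has a nonzero global section. It therefore suffices to establish two points: (i) for every evenly split $\mq$ of rank $\ell$ and degree $-D$, where $\ell-D=\frac{1}{r+1}\sum_i|\lambda_i|$, and every $\mgg\in\Fl_S(\mq)$, there is a section $s_{(\mq,\mgg)}\in H^0(\parbun_{r+1},\mathcal{P}_{r+1})$ which is nonzero at $(\mv,\mf,\gamma)$ exactly when the vector space \eqref{openeye} vanishes; and (ii) whenever $H^0(\parbun_{r+1},\mathcal{P}_{r+1})\neq 0$, some $s_{(\mq,\mgg)}$ is nonzero. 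Given (i) and (ii), $\rk\vr\neq 0$ is equivalent to the existence of a pair $(\mq,\mgg)$ with $s_{(\mq,\mgg)}\neq 0$, and, since a global section over a connected variety is nonzero precisely when it is nonzero at some point, (i) turns this into assertion (2).

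For (i), given $(\mv,\mf)$ and $(\mq,\mgg)$, I would form the parabolic homomorphism sheaf $\mathcal{SH}\subseteq\shom(\mv,\mq)$ defined by the conditions $\phi_{p_i}(F^{p_i}_a)\subseteq G^{p_i}_{\ell-\lambda_i^{(a)}}$ at the marked points. It has rank $(r+1)\ell$, and the colength of the modification at $p_i$ is $\sum_a\lambda_i^{(a)}=|\lambda_i|$, so on $\pone$ one finds $\chi(\mathcal{SH})=(r+1)\ell-(r+1)D-\sum_i|\lambda_i|$, which is zero precisely by the normalization $\ell-D=\frac{1}{r+1}\sum_i|\lambda_i|$. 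Because $\chi(\mathcal{SH})=0$, the determinant of cohomology $\det H^1(\pone,\mathcal{SH})\tensor\det H^0(\pone,\mathcal{SH})^{*}$, formed as $(\mv,\mf,\gamma)$ varies over $\parbun_{r+1}$ with $(\mq,\mgg)$ fixed, is a line bundle carrying a canonical section that vanishes exactly where $H^0(\mathcal{SH})\neq 0$, i.e., where \eqref{openeye} is nonzero. A standard determinant-of-cohomology computation --- the behavior of $\mv\mapsto D(\mv)$ under passage to $\shom(\mv,\mq)$ (with $\mq$ fixed) and under the elementary modifications at the $p_i$ --- identifies this line bundle, up to a constant line depending only on $(\mq,\mgg)$, with $D(\mv)^{\ell}\tensor\bigotimes_i\ml_{\lambda_i}(\mv_{p_i},F^{p_i}_{\bullet})=\mathcal{P}_{r+1}$; this comparison is essentially the one in \cite{pauly} and \cite{b4}.

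For (ii), the substantive point, one must show that the family $\{s_{(\mq,\mgg)}\}$ spans $H^0(\parbun_{r+1},\mathcal{P}_{r+1})$ --- equivalently, that nonvanishing of conformal blocks is detected by these degeneracy-locus sections. I would import this from \cite{b4}, where it is obtained via the identification of $\rk\vr$ with an intersection number on a Quot scheme (Witten's dictionary, see \cite{witten}) together with an analysis of the associated Hecke incidence loci. In outline: assuming $H^0\neq 0$, a general $(\mv,\mf,\gamma)\in\parbun_{r+1}$ with $\mv$ evenly split lies off the base locus of $\mathcal{P}_{r+1}$; at such a point one chooses $\mq$ evenly split and $\mgg$ generic so that the $\sum_i|\lambda_i|$ Schubert-type conditions cutting out $\mathcal{SH}$ are in general position (Kleiman's transversality on the relevant flag bundles), forcing $H^0(\mathcal{SH})=0$ and hence $s_{(\mq,\mgg)}(\mv,\mf,\gamma)\neq 0$. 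Conversely, any one pair $(\mv,\mq,\mf,\mgg)$ for which \eqref{openeye} is zero already exhibits $s_{(\mq,\mgg)}$ as a nonzero section.

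The main obstacle is step (ii): controlling the base locus of $\mathcal{P}_{r+1}$, equivalently proving that these particular degeneracy-locus sections suffice to detect $H^0(\parbun_{r+1},\mathcal{P}_{r+1})\neq 0$. This is the heart of \cite{b4}; the remaining ingredients --- the $\chi(\mathcal{SH})=0$ bookkeeping, the determinant-of-cohomology identification, and the Kleiman transversality count --- are routine.
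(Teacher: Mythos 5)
Your reconstruction is structurally faithful to what the paper's citation points at: the paper itself offers no proof of this lemma, saying only that it is a special case of Proposition 5.5 of \cite{b4}, so any argument here amounts to unpacking that reference. Your unpacking is correct in outline. The identification \eqref{verlinde} of $\vr\mid_x^*$ with $H^0(\parbun_{r+1},\mathcal{P}_{r+1})$, the Euler--characteristic bookkeeping $(r+1)(\ell-D)-\sum_i|\lambda_i|=0$ for the modified $\shom(\mv,\mq)$, and the determinant-of-cohomology construction of sections $s_{(\mq,\mgg)}$ vanishing exactly where \eqref{openeye} is nonzero are all as in \cite{pauly} and \cite{b4}, and together they give $(2)\Rightarrow(1)$ cleanly.

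One correction to your accounting of what is routine, though. You list ``the Kleiman transversality count'' among the routine ingredients and treat it as separate from ``the heart of \cite{b4}''; in fact these are the same step, and it is not routine. Kleiman's theorem needs a transitive group action on the ambient space. The groups $\GLL(\mq_{p_i})$ act transitively on the flag varieties $\Fl(\mq_{p_i})$, but when $D>0$ no group acts transitively on the space of maps $\mv\to\mq$, and the evaluation maps $\Hom(\mv,\mq)\to\Hom(\mv_{p_i},\mq_{p_i})$ need not be surjective or well positioned, so generic choice of the flags $\mgg$ does not formally force the space \eqref{openeye} to vanish. Proving that it does vanish for generic $(\mq,\mgg)$ whenever $H^0(\parbun_{r+1},\mathcal{P}_{r+1})\neq 0$ --- equivalently, that the degeneracy-locus sections detect nonvanishing --- is exactly the main transversality theorem of \cite{PB} and \cite{b4}, established there by a tangent-space analysis of the enumerative problem rather than by Kleiman. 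Since you do explicitly import ``the heart'' from \cite{b4}, your argument is not wrong; but the implication $(1)\Rightarrow(2)$ rests entirely on that imported theorem, which is precisely the content of the paper's bare citation.
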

 This is a special case of Proposition 5.5 from \cite{b4}.
For a fixed $y=(\mq,\mgg)$ we may think of \eqref{openeye} as defining a section  $s_y\in H^0(\parbun_{r+1},\mathcal{P}_{r+1})$ (see \cite{BGMA} for a construction, also see \cite{PB,RO}). 
The section $s_y$ does not vanish at $(\mv,\mf,\gamma)\in\parbun_{r+1}$ if and only if \eqref{openeye} is zero.

The following is used in the proof of Proposition \ref{ExactNonzeroResult}.

\begin{proposition}\label{Phish}
Suppose $\vec{\nu}$ is an $n$-tuple of dominant integral weights in  $P_{\ell}(\sL_{\tr})$ ($\vec{\nu}$ may not be normalized).
Suppose $\tr=a+b$ with $a$ and $b$ positive integers. Let $A=(A_1,\dots,A_n)$ and $B=(B_1,\dots,B_n)$ be $n$-tuples of subsets of $[\tr]$ such that $|A_i|=a$ and $|B_i|=b$, $[\tr]=A_i\cup B_i,\  i=1,\dots,n$.
Let $\vec{\nu}_{A}=(\nu_{1,A_1},\dots,\nu_{n,A_n})$ be the $n$-tuple of $a\times \ell$ Young diagrams formed by taking the  $A_i$-rows of $\nu_i$ for each $i$ (similarly
 $\vec{\nu}_B$).
 Suppose
 \begin{enumerate}
 \item
 \begin{equation}\frac{1}{\tr}\sum_{i=1}^n |\nu_i|=\frac{1}{a}\sum_{i=1}^n  | \nu_{i,A_i}|=\frac{1}{b}\sum_{i=1}^n |\nu_{i,B_i}|=   \delta\in \Bbb{Z}.
 \end{equation}
\item  If $a>1$, then $\rk \Bbb{V}_{\sL_{a},\ell,\vec{\nu}_A}\neq 0$.
\item  If $b>1$,  then $\rk \Bbb{V}_{\sL_{b},\ell,\vec{\nu}_B}\neq 0$.
\end{enumerate}
 Then $\rk\Bbb{V}_{\sL_{\tr},\ell,\vec{\nu}}\neq 0$.
\end{proposition}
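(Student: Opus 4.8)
The plan is to mimic the geometric argument in Section \ref{cherryblossom}, but now used only to produce a nonzero section (i.e. to establish nonvanishing of the rank) rather than nonvanishing of a Chern class. First I would invoke Lemma \ref{closedeye} in the contrapositive direction: to show $\rk\Bbb{V}_{\sL_{\tr},\ell,\vec{\nu}}\neq 0$ it suffices to exhibit, over a chosen $x=(\pone,p_1,\dots,p_n)\in \operatorname{M}_{0,n}$, an evenly split bundle $\mq$ of rank $\ell$ and degree $-D$ (with $\ell-D=\frac{1}{\tr}\sum|\nu_i|=\delta$) together with $\mgg\in\Fl_S(\mq)$ such that the section $s_{(\mq,\mgg)}\in H^0(\parbun_{\tr},\mathcal{P}(\sL_{\tr},\ell,\vec{\nu}))$ is nonzero at \emph{some} point $(\mv,\mf,\gamma)\in\parbun_{\tr}$; equivalently, the Hom-space \eqref{openeye} vanishes for that $(\mv,\mf,\gamma)$ and that $(\mq,\mgg)$. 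I would produce $(\mv,\mf,\gamma)$ as a direct sum $(\mv_A\oplus\mv_B,\mh,\gamma_A\wedge\gamma_B)$ coming through the block-diagonal map $\beta:\parbun_a\times\parbun_b\to\parbun_{\tr}$ attached to the decomposition $[\tr]=A_i\cup B_i$, exactly as in the map $\beta$ of Section \ref{cherryblossom}, with flags recombined according to the subsets $A_i,B_i$.

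The key steps, in order, are: (i) translate the vanishing of \eqref{openeye} for $\mv_A\oplus\mv_B$ into the simultaneous vanishing of the two Hom-spaces attached to $\vec{\nu}_A$ (for $\sL_a$) and $\vec{\nu}_B$ (for $\sL_b$) — this is the ``no nonzero $\psi_1:\mv_A\to\mq$, $\psi_2:\mv_B\to\mq$ with the prescribed flag constraints'' assertion, and it is the same reduction used in the Remark following Section \ref{cherryblossom}, using that the $A_i$-th row of $\nu_i$ records the relevant jump for $\mu$-type data and the $B_i$-rows for $\nu$-type data; (ii) invoke the hypothesis: if $a>1$, apply Lemma \ref{closedeye} to $\rk\Bbb{V}_{\sL_a,\ell,\vec{\nu}_A}\neq 0$ to get $(\mv_A,\mf_A)$ and a generic $(\mq,\mgg)$ making the $A$-Hom-space zero, and symmetrically for $b>1$; if $a=1$ (resp. $b=1$) handle it by hand exactly as in the ``Case $r=2$'' paragraph, where the constraint is on a line subbundle $L(\sum p_i)\to\mq$ and Kleiman transversality gives negative expected dimension; (iii) check that one can choose a \emph{single} $(\mq,\mgg)$ that works for both factors simultaneously — this is where genericity matters, and it goes through because, as in Section \ref{cherryblossom}, for a fixed $(\mv_A,\mf_A)$ the vanishing holds for generic $(\mq,\mgg)$, and likewise for a fixed $(\mv_B,\mf_B)$, so the intersection of two dense open conditions on $(\mq,\mgg)$ is still dense; (iv) conclude that the combined map $\mv_A\oplus\mv_B\to\mq$ forced by a hypothetical nonvanishing would restrict to a nonzero $\psi_1$ or $\psi_2$, contradicting (ii)–(iii), hence \eqref{openeye} is zero and $s_{(\mq,\mgg)}$ is nonzero at $(\mv,\mf,\gamma)$, giving $\rk\Bbb{V}_{\sL_{\tr},\ell,\vec{\nu}}\neq 0$.

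The main obstacle I anticipate is step (iii): making precise the compatibility of the two genericity statements and the bookkeeping that the flag condition on $\mq$ imposed at $p_i$ by the $A_i$-rows and by the $B_i$-rows together is exactly the flag condition imposed by $\nu_i$, so that a generic $\mgg\in\Fl_S(\mq)$ simultaneously restricts to generic flag data for both the $\sL_a$ and $\sL_b$ problems. The numerical hypothesis (1) is exactly what is needed to make the degrees match ($\ell-D=\delta$ on all three sides), so the degeneracy-locus/section construction of Section \ref{openeye2} applies uniformly; but one has to confirm that the ``remaining rows'' Young diagrams $\vec{\nu}_A$, $\vec{\nu}_B$ need not be normalized and that Lemma \ref{closedeye} is insensitive to normalization (it is, since a global twist of $\mq$ by $\mathcal{O}(1)$ shifts all jumps uniformly). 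Once these are nailed down, the argument is essentially a bookkeeping specialization of the $r=2$-versus-$r>2$ dichotomy already carried out in Section \ref{cherryblossom}, with $a$ playing the role of $2$ and $b$ the role of $r-1$ (or vice versa), and no new geometric input is required.
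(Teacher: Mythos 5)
Your proposal is correct in outline, but it takes a genuinely different route from the paper. The paper does not use parabolic bundles here at all: it invokes the other face of the quantum Horn machinery, namely Proposition \ref{hornp} (\cite[Proposition 3.4]{b4}), which says that $\rk\Bbb{V}_{\sL_{\tr},\ell,\vec{\nu}}\neq 0$ if and only if there exist matrices $C_i\in\operatorname{U}(\tr)$, each conjugate to the diagonal matrix with entries $\exp(2\pi i\nu_i^{(c)}/\ell)$, with $C_1\cdots C_n=\gamma\operatorname{Id}$ and $\gamma=\exp(2\pi i\sum_i|\nu_i|/(\ell\tr))$. Hypotheses (2) and (3) supply such tuples $A_i\in\operatorname{U}(a)$ and $B_i\in\operatorname{U}(b)$ for $\vec{\nu}_A$ and $\vec{\nu}_B$; hypothesis (1) guarantees that the two scalars coincide, both equal to $\exp(2\pi i\delta/\ell)$; and one simply sets $C_i=A_i\oplus B_i$, handling $a=1$ or $b=1$ by writing down the $1\times 1$ unitary matrices explicitly. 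That is the entire proof. Your route --- running the Hom-space criterion of Lemma \ref{closedeye} for the three problems with a common evenly split $(\mq,\mgg)$, exactly as in Section \ref{cherryblossom} with $(2,r-1)$ replaced by $(a,b)$ --- should also go through, and the obstacles you flag are the right ones: a single generic $(\mq,\mgg)$ serves both factors because each good locus is open and nonempty in the same irreducible parameter space (same rank $\ell$ and same degree $-D$ precisely by hypothesis (1)), and only the easy implication (restriction of a combined $\phi$ to $\psi_1,\psi_2$) is needed. One small correction: the insensitivity to normalization is not a global twist of $\mq$ by $\mathcal{O}(1)$ but a Hecke (elementary) modification at the individual marked points, since normalizing $\nu_i$ shifts only the jump data at $p_i$; this is absorbed by the $\operatorname{GL}$-version of \cite[Proposition 5.5]{b4}. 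In short, what the paper's argument buys is that all of your bookkeeping collapses into the observation that block-diagonal products of unitary matrices multiply blockwise; what your argument buys is explicit geometric data witnessing the nonvanishing, at the cost of genericity and transversality verifications.
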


\begin{proof}
We deduce the proof
from (one of the forms of) the quantum generalization of the Horn conjecture  \cite[Proposition 3.4]{b4}.

For every dominant integral weight $\lambda$ of $\sL_{r+1}$
define a diagonal matrix $\alpha(\lambda,\ell)=\alpha_{ij}$ in the special unitary group $\operatorname{SU}(r+1)$ with diagonal entries
$$\alpha_{aa}= c^{-1}\exp(\frac{2\pi i \lambda^{(a)}}{\ell}), \ c=\exp(\frac{2\pi i |\lambda|}{\ell (r+1)}).$$

Consider a conformal blocks bundle $\vr$ such that $(r+1)$ divides  $\sum|\lambda_i|$.  We now use the proposition below (by forming suitable direct sums): If $A_i$ in $\operatorname{U}(a)$ and $B_i\in\operatorname{U}(b)$, with $A_1A_2\dots A_n=\gamma \operatorname{Id}$ and $B_1B_2\dots B_n=\gamma\operatorname{Id}, $ then we form matrices $C_i\in \operatorname{U}(a+b)$ with $C_1C_2\dots C_n=\gamma \operatorname{Id}$ by direct sum.  If $b=1$, we let $B_i=(b_i)$ be the $1\times 1$ matrices with $b_i=\exp(\frac{2\pi i \nu^{(1)}_{i,B_i}}{\ell})$.
\end{proof}
\begin{proposition}\cite[Proposition 3.4]{b4}\label{hornp}
The following are equivalent:
\begin{enumerate}
\item $\rk \vr\neq 0$.
\item There exist matrices $A_i\in\operatorname{SU}(r+1)$ with $A_1A_2\dots A_n=\operatorname{Id}$ where each $A_i$ is conjugate to  $\alpha(\lambda_i,\ell)$.
\item There exist matrices $A_i$ in the unitary group $\operatorname{U}(r+1)$ with $A_1A_2\dots A_n=\gamma \operatorname{Id}$ where  each $A_i$ is conjugate to a diagonal matrix with entries $\exp(\frac{2\pi i \lambda^{(a)}_i}{\ell})$, and $\gamma=\exp(\frac{2\pi i \sum_i |\lambda_i|}{\ell (r+1)})$.
\end{enumerate}
\end{proposition}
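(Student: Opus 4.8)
The plan is to prove the three conditions equivalent by separating the elementary normalization $(2)\Leftrightarrow(3)$ from the substantive equivalence $(1)\Leftrightarrow(3)$, which I would route through the theory of parabolic bundles and the Mehta--Seshadri correspondence. The guiding principle is that nonvanishing of conformal blocks is, via generalized theta functions, a statement about nonemptiness of a moduli space of semistable parabolic bundles, and the latter is governed by unitary local systems on the punctured sphere.

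First I would dispose of $(2)\Leftrightarrow(3)$ by a central scaling argument. Given matrices $A_i$ as in $(3)$, each $A_i$ has determinant $\exp(2\pi i|\lambda_i|/\ell)$, and since $c=\exp(2\pi i|\lambda_i|/(\ell(r+1)))$ satisfies $c^{r+1}=\det A_i$, the matrices $c^{-1}A_i$ lie in $\operatorname{SU}(r+1)$ and are conjugate to $\alpha(\lambda_i,\ell)$. One then checks that $\prod_i(c_i^{-1}A_i)=\operatorname{Id}$, the central twist being absorbed exactly by the scalars since $\prod_i c_i=\gamma$. Reversing this yields $(2)\Rightarrow(3)$, so this step is pure bookkeeping with roots of unity and determinants.

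The core is $(1)\Leftrightarrow(3)$. Using the identification of conformal blocks with generalized theta functions recalled in \eqref{verlinde}, $\rk\vr\neq 0$ is equivalent to $H^0(\parbun_{r+1},\mathcal{P}(\sL_{r+1},\ell,\vec{\lambda}))\neq 0$. The first key point is that this space of sections is nonzero precisely when the moduli space of semistable quasi-parabolic $\operatorname{SL}_{r+1}$-bundles on $\pone$ with marked points $p_1,\dots,p_n$ and parabolic weights determined by $\lambda_i^{(a)}/\ell$ is nonempty; this rests on the descent of $\mathcal{P}$ to an ample line bundle on the coarse moduli space, so that sections detect the semistable locus, and here condition (a), equivalently parabolic degree zero, is exactly what makes the theory apply. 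The second key point is the theorem of Mehta and Seshadri: a polystable parabolic bundle of parabolic degree zero on the $n$-punctured sphere corresponds to a unitary representation of $\pi_1(\pone\setminus\{p_1,\dots,p_n\})$ whose local monodromy around $p_i$ has eigenvalues $\exp(2\pi i\lambda_i^{(a)}/\ell)$. Since the fundamental group of the $n$-punctured sphere is generated by loops $\gamma_1,\dots,\gamma_n$ subject to $\gamma_1\cdots\gamma_n=1$, such a representation is exactly the data of matrices $A_i$ with $A_1\cdots A_n$ central and the prescribed conjugacy classes, which is $(3)$; the passage between $\operatorname{SU}$ (trivialized determinant) and $\operatorname{U}$ recovers the equivalence with $(2)$ already established.

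The main obstacle is the first key point: matching nonvanishing of the theta line bundle with nonemptiness of the semistable moduli space. One must address the distinction between the moduli stack $\parbun_{r+1}$, on which $\mathcal{P}$ naturally lives, and the coarse space; verify the ampleness needed for sections to detect nonemptiness; and track the precise correspondence between the combinatorial data $\lambda_i^{(a)}$ and the parabolic weights and monodromy eigenvalues, including the role of the determinant trivialization. This is the content of \cite[Proposition 3.4]{b4}, which also connects the statement to Witten's dictionary \cite{witten} relating $\rk\vr$ to Gromov--Witten invariants of Grassmannians; an alternative route to $(1)\Leftrightarrow(3)$ would run through that quantum-cohomological interpretation and the quantum Horn inequalities, but the parabolic-bundle argument is the most direct and conceptual.
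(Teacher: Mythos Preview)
The paper does not supply its own proof of this proposition; it is simply quoted from \cite[Proposition 3.4]{b4}. So there is no in-paper argument to compare against, and your proposal must be judged on its own merits.

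Your reduction $(2)\Leftrightarrow(3)$ by central rescaling is correct: with $c_i=\exp(2\pi i|\lambda_i|/(\ell(r+1)))$ one has $\prod_i c_i=\gamma$, and the bookkeeping goes through exactly as you wrote.

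The gap is in the implication $(3)\Rightarrow(1)$. You argue: unitary representation $\Rightarrow$ (via Mehta--Seshadri) polystable parabolic bundle $\Rightarrow$ semistable moduli space nonempty $\Rightarrow$ $H^0(\parbun_{r+1},\mathcal{P})\neq 0$. The last step is where the argument breaks. Ampleness of the descended line bundle on the coarse moduli space guarantees that \emph{some power} has a nonzero section, which is the GIT definition of semistability; it does not by itself force a nonzero section of the \emph{first} power. Equivalently, ``sections detect the semistable locus'' is true as a statement about the graded ring $\bigoplus_{m\ge 0}H^0(\mathcal{P}^{\otimes m})$, not about $H^0(\mathcal{P})$ alone. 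You flag this as ``the main obstacle'' but then defer to \cite{b4}, which is circular since that is exactly the proposition being proved.

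The route actually taken in \cite{b4}, and echoed in this paper's Lemma \ref{closedeye} and Section \ref{openeye2}, does not rely on abstract ampleness. It produces explicit nonzero sections $s_{(\mq,\mgg)}$ of $\mathcal{P}$ by a degeneracy-locus construction: one fixes an evenly split bundle $\mq$ with flags $\mgg$ and shows, via a transversality/dimension count tied to Gromov--Witten numbers on Grassmannians (Witten's dictionary), that for generic $(\mq,\mgg)$ the section does not vanish at a given semistable point. This is the missing mechanism in your sketch. Your ``alternative route'' through quantum cohomology and the quantum Horn inequalities is in fact closer to how \cite{b4} establishes the hard direction than the pure Mehta--Seshadri argument you present as primary.
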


\begin{bibdiv}
\begin{biblist}


\bib{ags}{article}{
  author={Alexeev, Valery},
  author={Gibney, Angela},
  author={Swinarski, David},
  title={Higher level conformal blocks on $\overline {\operatorname {M}}_{0,n}$ from $\sL _2$},
  journal={Proc. Edinb. Math. Soc., to appear}
  year={2010},
  note={arXiv:1011.6659 [math.AG]},
}

\bib{agss}{article}{
  author={Arap, Maxim},
  author={Gibney, Angela},
  author={Stankewicz, Jim},
  author={Swinarski, David},
  title={$\sL _n$ level $1$ Conformal blocks divisors on $\overline {\operatorname {M}}_{0,n}$},
  journal={International Math Research Notices},
  date={2011},
  issn={0003-486X},
  note={arXiv:1009.4664 [math.AG]},
}

\bib{Beauville}{article}{
   author={Beauville, Arnaud},
   title={Conformal blocks, fusion rules and the Verlinde formula},
   conference={
      title={},
      address={Ramat Gan},
      date={1993},
   },
   book={
      series={Israel Math. Conf. Proc.},
      volume={9},
      publisher={Bar-Ilan Univ.},
      place={Ramat Gan},
   },
   date={1996},
   pages={75--96},
   review={\MR{1360497 (97f:17025)}},
}

\bib{PB}{article}
{    AUTHOR = {Belkale, Prakash},
     TITLE = {Invariant theory of {${\rm GL}(n)$} and intersection theory of
              {G}rassmannians},
   JOURNAL = {Int. Math. Res. Not.},
  FJOURNAL = {International Mathematics Research Notices},
      YEAR = {2004},
    NUMBER = {69},
     PAGES = {3709--3721},
      ISSN = {1073-7928},
       URL = {http://dx.doi.org/10.1155/S107379280414155X},
}

\bib{fultonconjecture}{article}
{
    AUTHOR = {Belkale, Prakash},
     TITLE = {Geometric proof of a conjecture of {F}ulton},
   JOURNAL = {Adv. Math.},
  FJOURNAL = {Advances in Mathematics},
    VOLUME = {216},
      YEAR = {2007},
    NUMBER = {1},
     PAGES = {346--357},
      ISSN = {0001-8708},
     CODEN = {ADMTA4},
   MRCLASS = {20G05 (14M15 14N20)},
  MRNUMBER = {2353260 (2009a:20073)},
MRREVIEWER = {H. H. Andersen},
review={\MR{2353260 (2009a:20073)}}
       URL = {http://dx.doi.org/10.1016/j.aim.2007.05.013},
}

\bib{b4}{article}{
    AUTHOR = {Belkale, Prakash},
    TITLE = {Quantum generalization of the {H}orn conjecture},
   JOURNAL = {J. Amer. Math. Soc.},
  FJOURNAL = {Journal of the American Mathematical Society},
    VOLUME = {21},
    YEAR = {2008},
    NUMBER = {2},
     PAGES = {365--408},
      ISSN = {0894-0347},
     review={\MR{2373354 (2008m:14109)}},
   MRCLASS = {14N35 (14D20 15A42)},
  MRNUMBER = {2373354 (2008m:14109)},
MRREVIEWER = {Harry Tamvakis},
      URL = {http://dx.doi.org/10.1090/S0894-0347-07-00584-X},
}	


\bib{Bel10}{article}{
  author={Belkale, Prakash},
  title={The tangent space to an enumerative problem},
  journal={Proceedings of the {I}nternational {C}ongress of
  {M}athematicians.},
  volume={II},
  date={2010},
  pages={405--426},
}

\bib{BGMA}{article}{
  author={Belkale, Prakash},
  author={Gibney, Angela},
  author={Mukhopadhyay, Swarnava}
  title={Vanishing and Identities of conformal blocks divisors},
  journal={},
  volume={},
  date={2014},
  note={See arXiv:1308.4906 [math.AG], to appear in Algebraic Geometry}
}

\bib{qdeform}{article}{
  author={Belkale, Prakash},
  author={Kumar, Shrawan},

  title={The multiplicative eigenvalue problem and deformed quantum cohomology},

  date={2013},

  note={arXiv:1310.3191 [math.AG]}
}





\bib{BCF}{article}{
  author={Bertram, A.},
  author={Ciocan-Fontanine, I.},
   author={Fulton, W.},
  title={Quantum multiplication of Schur Polynomials},
  year={1997},
}

\bib{MDSBS}{article}{
  author={Castravet, Ana-Maria},
  author={Tevelev, Jenia}
  title={$\overline M_{0,n}$ is not a Mori Dream Space},
 journal={	arXiv:1311.7673 [math.AG]},
 date={2014},
}

\bib{Fakh}{article}{
   author={Fakhruddin, Najmuddin},
   title={Chern classes of conformal blocks},
   conference={
      title={Compact moduli spaces and vector bundles},
   },
   book={
      series={Contemp. Math.},
      volume={564},
      publisher={Amer. Math. Soc.},
      place={Providence, RI},
   },
   date={2012},
   pages={145--176},
   review={\MR{2894632}},
}





\bib{FSV2}{article} {
author = {Feigin, B. L.}
    author= {Schechtman, V. V.}
    author={Varchenko, A. N.},
     TITLE = {On algebraic equations satisfied by hypergeometric correlators
              in {WZW} models. {II}},
   JOURNAL = {Comm. Math. Phys.},
  FJOURNAL = {Communications in Mathematical Physics},
    VOLUME = {170},
      YEAR = {1995},
    NUMBER = {1},
     PAGES = {219--247},
      ISSN = {0010-3616},
     CODEN = {CMPHAY},
     review={\MR{1331699, (97g:81064)}},
   MRCLASS = {81T40 (17B81 32G34 81R10)},
MRNUMBER = {1331699 (97g:81064)},
       URL = {http://projecteuclid.org.libproxy.lib.unc.edu/getRecord?id=euclid.cmp/1104272957},
}

\bib{fulton}{article} {
    AUTHOR = {Fulton, William},
     TITLE = {Eigenvalues, invariant factors, highest weights, and
              {S}chubert calculus},
   JOURNAL = {Bull. Amer. Math. Soc. (N.S.)},
  FJOURNAL = {American Mathematical Society. Bulletin. New Series},
    VOLUME = {37},
      YEAR = {2000},
    NUMBER = {3},
     PAGES = {209--249 (electronic)},
      ISSN = {0273-0979},
     CODEN = {BAMOAD},
   MRCLASS = {15A42 (14L24 14M15 20G05)},
review= = {\MR{1754641 (2001g:15023)}},
       URL = {http://dx.doi.org/10.1090/S0273-0979-00-00865-X},
}


\bib{Giansiracusa}{article}{
  author={Giansiracusa, Noah},
  title={Conformal blocks and rational normal curves},
  journal={Journal of Algebraic Geometry},
  volume = {22},
      year = {2013},
    number = {4},
     pages = {773--793},

}

\bib{GiansiracusaGibney}{article}{
  author={Giansiracusa, Noah},
  author={Gibney, Angela},
  title={The cone of type A, level 1 conformal block divisors},
  journal={Adv. Math.}
  volume={231},
  page={798--814},
  year={2012},
}

\bib{GJM}{article}{
  author={Giansiracusa, Noah},
  author={Jensen, David},
  author={Moon, Han-Bom},
  title={GIT compactifications of $\operatorname{M}_{0,n}$ and flips},
  journal={Advances in Mathematics, preliminarily accepted}
  year={2011},
  note={arXiv:1112.0232v1 [math.AG]},
}

\bib{GibneyCompositio}{article}{
  author={Gibney, Angela},
  title={Numerical criteria for divisors on $\overline {\operatorname {M}}_g$ to be ample},
  journal={Compos. Math.},
  volume={145},
  date={2009},
  number={5},
  pages={1227--1248},
  issn={0010-437X},
  review={\MR {2551995}},
}


\bib{gjms}{article}{
  author={Gibney, Angela},
  author={Jensen, David},
  author={Moon, Han-Bom},
  author={Swinarski, David},
  title={Veronese quotient models of $\overline{\operatorname{M}}_{0,n}$ and conformal blocks},
  journal={Michigan Math Journal, to appear}
  year={2012},
  note={arXiv:1208.2438 [math.AG]},
}

\bib{GoKa}{article}{
 author={{Gonz\'alez}, J.}
 author= {Karu, K.},
  title={Some non-finitely generated Cox rings},
  journal={ArXiv e-prints},
  volume={1407.6344},
  date={2014},
  number={},
  pages={},
  }

\bib{HassettWeighted}{article}{
  author={Hassett, Brendan},
  title={Moduli spaces of weighted pointed stable curves},
  journal={Adv. Math.},
  volume={173},
  date={2003},
  number={2},
  pages={316--352},
}

\bib{Kaz}{article}{
  author={Kazanova, Anna},
  title={On $\op{S}_n$ invariant conformal blocks vector bundles of rank one on $\overline{\operatorname{M}} _{0,n}$},
  date={2014},
  note={arXiv:1404.5845v1 [math.AG] },
}

\bib{KeelMcKernan}{article}{
  author={Keel, Se{\'a}n},
  author={McKernan, James},
  title={Contractible Extremal Rays on $\overline{\operatorname{M}} _{0,n}$},
  date={1996},
  note={alg-geom/9607009v1},
}

\bib{KiemMoon}{article}{
  author={Kiem, Y.-H.},
  author={Moon, H.-B.},
  title={Moduli spaces of weighted pointed stable rational curves via GIT},
 JOURNAL = {Osaka J. Math.},
  FJOURNAL = {Osaka Journal of Mathematics},
    VOLUME = {48},
      YEAR = {2011},
    NUMBER = {},
     PAGES = {1115--1140},
}

\bib{KTW2}{article}
{
AUTHOR = {Knutson, Allen}
    AUTHOR=      {Tao, Terence},

    AUTHOR = {Woodward, Christopher},
     TITLE = {The honeycomb model of {${\rm GL}_n(\Bbb C)$} tensor
              products. {II}. {P}uzzles determine facets of the
              {L}ittlewood-{R}ichardson cone},
   JOURNAL = {J. Amer. Math. Soc.},
  FJOURNAL = {Journal of the American Mathematical Society},
    VOLUME = {17},
      YEAR = {2004},
    NUMBER = {1},
     PAGES = {19--48},
      ISSN = {0894-0347},
   MRCLASS = {14N15 (05E10 15A69 52B12)},
  MRNUMBER = {2015329 (2005f:14105)},
  review={\MR{MR2015329 (2005f:14105)}},
       URL = {http://dx.doi.org/10.1090/S0894-0347-03-00441-7},
}


\bib{manon}{article}{
  author={Manon, Christopher},
 title={The Algebra of Conformal blocks},
  date={2009},
  note={arXiv:0910.0577v6 [math.AG]},
}

\bib{HBThesis}{article}{
  author={Moon, H.-B.},
 title={Birational Geometry of moduli spaces of curves of genus zero},
  date={2011},
  note={PhD Thesis},
}


\bib{MO07}{article}{
author={Marian, Alina},
author={Oprea, Dragos},
title={The level-rank duality for non-abelian theta functions.},
journal={Invent. Math.},
volume={168},
pages={225--247},
number={2},
date={2007},
note={URL: \url{http://dx.doi.org/10.1007/s00222-006-0032-z}},
}

\bib{SM1}{article}{
author={Mukhopadhyay, Swarnava},
title={Remarks on level one conformal block divisors},
journal={C. R. Math. Acad. Sci. Paris},
volume={352},
date={2014},
number={3},
pages={179-182},
}

\bib{RO}{article}{
    AUTHOR = {Oudompheng, R{\'e}my},
     TITLE = {Rank-level duality for conformal blocks of the linear group},
   JOURNAL = {J. Algebraic Geom.},
  FJOURNAL = {Journal of Algebraic Geometry},
    VOLUME = {20},
      YEAR = {2011},
    NUMBER = {3},
     PAGES = {559--597},
      ISSN = {1056-3911},
       URL = {http://dx.doi.org/10.1090/S1056-3911-2010-00529-2},
}

\bib{pauly}{article} {
    AUTHOR = {Pauly, Christian},
     TITLE = {Espaces de modules de fibr\'es paraboliques et blocs
              conformes},
   JOURNAL = {Duke Math. J.},
  FJOURNAL = {Duke Mathematical Journal},
    VOLUME = {84},
      YEAR = {1996},
    NUMBER = {1},
     PAGES = {217--235},
      ISSN = {0012-7094},
     CODEN = {DUMJAO},
     review={\MR{MR1394754 (97h:14022)}},
   MRCLASS = {14D20 (14F05 17B10 81T40)},
  MRNUMBER = {1394754 (97h:14022)},
       URL = {http://dx.doi.org/10.1215/S0012-7094-96-08408-2},
}

\bib{sorger}{article} {
    AUTHOR = {Sorger, Christoph},
     TITLE = {La formule de {V}erlinde},
      NOTE = {S{\'e}minaire Bourbaki, Vol. 1994/95},
   JOURNAL = {Ast\'erisque},
  JOURNAL = {Ast\'erisque},
    NUMBER = {237},
      YEAR = {1996},
     PAGES = {Exp.\ No.\ 794, 3, 87--114},
      ISSN = {0303-1179},
      review = {\MR{MR1423621 (98f:14009)}},
   MRCLASS = {14D22 (14H60 17B68 81T40)},
  MRNUMBER = {1423621 (98f:14009)},
}

\bib{ConfBlocks}{article}{
		author={Swinarski, David},
		title={\texttt{\upshape ConformalBlocks}: a Macaulay2 package for computing conformal block divisors},
		date={2010},
		note={Version 1.1, {http://www.math.uiuc.edu/Macaulay2/}},
}

\bib{Swin}{article}{
author={Swinarski, David},
		title={$sl_2$ conformal block divisors and the nef cone of $\bar{M}_{0,n}$},
		date={2011},
		note={arXiv:1107.5331 [math.AG]},
}

\bib{Tsuchimoto}{article}{
author = {Tsuchimoto, Yoshifumi.},
journal = {J. Math. Kyoto. Univ.},
keywords = {Kac-Moody Lie algebra; Schubert variety; normal Cohen-Macaulay variety; Weyl-Kac character formula},
language = {eng},
pages = {29-49},
title = {On the coordinate-free description of conformal blocks.},
url = {},
volume = {1},
year = {1993},
}
\bib{TUY}{article} {
    AUTHOR = {Tsuchiya, Akihiro}
    AUTHOR = {Ueno, Kenji}
    AUTHOR = {Yamada, Yasuhiko},
     TITLE = {Conformal field theory on universal family of stable curves
              with gauge symmetries},
 BOOKTITLE = {Integrable systems in quantum field theory and statistical
              mechanics},
    SERIES = {Adv. Stud. Pure Math.},
    VOLUME = {19},
     PAGES = {459--566},
 PUBLISHER = {Academic Press},
   ADDRESS = {Boston, MA},
      YEAR = {1989},
      review={\MR{1048605,(92a:81191)}},
   MRCLASS = {81T40 (14H15 17B67 17B81 32G15)},
  MRNUMBER = {1048605 (92a:81191)}
  }

\bib{witten}{article} {
    AUTHOR = {Witten, Edward}
     TITLE = {The {V}erlinde algebra and the cohomology of the {G}rassmannian.},
journal = {Geometry, topology},
    SERIES = {Adv. Stud. Pure Math.},
    VOLUME = {IV},
     PAGES = {357--422},
      YEAR = {1995},
       }

\end{biblist}
\end{bibdiv}
\vspace{0.4 in}

\noindent
P.B.: Department of Mathematics, University of North Carolina, Chapel Hill, NC 27599\\
email: belkale@email.unc.edu

\vspace{0.2 cm}

\noindent
A.G.: Department of Mathematics, University of Georgia, Athens, GA 30602\\
email: agibney@math.uga.edu

\vspace{0.2 cm}

\noindent
S.M.: Department of Mathematics, University of Maryland,
College Park, MD 20742\\
email: swarnava@umd.edu

\end{document}